\DeclareMathOperator{\Env}{\mathrm{Env}}
\DeclareMathOperator{\Obj}{\mathrm{Obj}}
\DeclareMathOperator{\Double}{\mathrm{D}}
\DeclareMathOperator{\Path}{\mathrm{Path}}
\DeclareMathOperator{\Free}{\mathrm{Free}}
\DeclareMathOperator{\Rel}{\mathrm{Rel}}
\newtheoremstyle{mytheorem}%
{10.0pt plus 2.0pt minus 2.0pt} 
{10.0pt plus 2.0pt minus 2.0pt} 
{\itshape} 
{} 
{\scshape} 
{.} 
{ } 
{} 
\newtheoremstyle{mydefinition}%
{10.0pt plus 2.0pt minus 2.0pt} 
{10.0pt plus 2.0pt minus 2.0pt} 
{} 
{} 
{\scshape} 
{.} 
{ } 
{} 
\newtheoremstyle{myremark}%
{10.0pt plus 2.0pt minus 2.0pt} 
{10.0pt plus 2.0pt minus 2.0pt} 
{} 
{} 
{\scshape} 
{.} 
{ } 
{} 
\theoremstyle{mytheorem}
\newtheorem{thm}{Theorem}[section]
\newtheorem{lem}[thm]{Lemma}
\newtheorem{cor}[thm]{Corollary}
\newtheorem{prop}[thm]{Proposition} 
\theoremstyle{myremark}
\newtheorem{rem}[thm]{Remark}
\newtheorem{conv}[thm]{Convention}
\newtheorem{notat}[thm]{Notation}
\theoremstyle{mydefinition}
\newtheorem{defin}[thm]{Definition}
\newtheorem{ex}[thm]{Example}
\definecolor{Maroon}{rgb}{0.6 0 0}
\definecolor{Prussian}{rgb}{0.05 0 0.6}
\definecolor{Emerald}{rgb}{0 0.5 0.1}
\newcommand{\set}[1]{\lbrace #1 \rbrace}
\newcommand{\source}{\mathfrak{s}}
\newcommand{\target}{\mathfrak{t}}
\newcommand{\N}{\mathbb{N}}
\newcommand{\Z}{\mathbb{Z}}
\newcommand{\Cat}[1]{\mathsf{#1}}
\newcommand{\id}[1][]{\mathrm{id}_{#1}}
\newcommand{\blank}{\raisebox{-2pt}{\text{---}}}
\newcommand{\opp}[1]{{#1}^{\text{op}}}
\newcommand{\Aa}{{\mathscr{A}}}
\newcommand{\Bb}{{\mathscr{B}}}
\newcommand{\Cc}{{\mathscr{C}}}
\newcommand{\Gg}{{\mathscr{G}}}
\newcommand{\Hh}{{\mathscr{H}}}
\newcommand{\Pp}{{\mathscr{P}}}
\newcommand{\Ss}{{\mathscr{S}}}
\title[Structure groupoids of quiver-theoretic 
Yang--Baxter maps]{Structure groupoids of\\
	quiver-theoretic 
	Yang--Baxter maps}
\author{Davide Ferri and Youichi Shibukawa}
\address{%
	\parbox[b]{0.9\linewidth}{(\textit{D.\@ Ferri}) Department of Mathematics ``G.\@ Peano'', University of Turin, via
		Carlo Alberto 10, 10123 Turin, Italy.\\
		Department of Mathematics and Data Science, Vrije Universiteit Brussel, Pleinlaan 2, 1050, Brussels, Belgium.}}
\email{d.ferri@unito.it, Davide.Ferri@vub.be}
\address{%
	\parbox[b]{0.9\linewidth}{(\textit{Y.\@ Shibukawa}) Department of Mathematics, Faculty of Science, Hokkaido University, Sapporo 060-0810, Japan.}}
\email{shibu@math.sci.hokudai.ac.jp}
\begin{document}
	\begin{abstract}
		\noindent Solutions to the quiver-theoretic quantum Yang--Baxter equation are associated with structure categories and structure groupoids. We prove that the structure groupoids of involutive non-degenerate solutions are Garside. This ge\-ne\-ra\-li\-ses a well-known result about the structure groups of set-theoretic solutions, due to Chouraqui. We also construct involutive non-degenerate solutions from sui\-ta\-ble presented categories. We then investigate the case of solutions of principal homogeneous type. Finally, we present some examples of this new class of Garside groupoids.
	\end{abstract}
	\maketitle
	\section{Introduction}
	\noindent The Yang--Baxter Equation (YBE), germinated from the works of Yang \cite{yang} and Baxter \cite{baxter1972partition,baxter1985exactly}, has been studied for a long time in mathematical physics and representation theory, whereas its set-theoretic variant (proposed by Drinfeld \cite{drinfeld2006some}) has recently grown into a major field of research in algebra. A solution to the set-theoretic (quantum) Yang--Baxter equation, called a \textit{set-theoretic Yang--Baxter map}, is a \textit{braided set}---i.e., the datum of a set $X$ and of a map $\sigma\colon X\times X\to X\times X$ satisfying the \textit{braid relation}
	\begin{equation*}
		(\sigma\times\id)(\id\times\sigma)(\sigma\times\id) = (\id\times\sigma)(\sigma\times\id)(\id\times\sigma).
	\end{equation*}
	
	A generalisation is provided by the 
	quiver-theoretic quantum Yang--Baxter Equation (quiver-theoretic YBE). A solution to the quiver-theoretic YBE, called a \textit{quiver-theoretic Yang--Baxter map} (\textit{quiver-theoretic YBM}, or simply \textit{YBM}), is a \textit{braided quiver}---i.e., the datum of a quiver $\Aa$ over a set of vertices $\Lambda$, and of a morphism $\sigma\colon \Aa\otimes\Aa\to \Aa\otimes\Aa$ of quivers over $\Lambda$ satisfying the braid relation
	\begin{equation*}
		(\sigma\otimes\id)(\id\otimes\sigma)(\sigma\otimes\id) = (\id\otimes\sigma)(\sigma\otimes\id)(\id\otimes\sigma)
	\end{equation*}
	in the monoidal category $\Cat{Quiv}_\Lambda$ of quivers over $\Lambda$. Here $\otimes$ denotes the tensor product of quivers over $\Lambda$ (see Definition \ref{def:tensor-prod}), and $\Aa\otimes\Aa$ is understood as the quiver $\Path_2(\Aa)$ consisting of paths of length $2$ on $\Aa$. A thorough study of the quiver-theoretic YBE was initiated by Andruskiewitsch \cite{andruskiewitsch2005quiver}.
	
	A set-theoretic YBM may be regarded as a quiver-theoretic YBM on a quiver $\Aa$ with a single vertex $\lambda$, and with loops on $\lambda$ being in a 1:1 correspondence with the elements of $X$. Much of the ``one-vertex'' theory of the set-theoretic YBE generalises almost \textit{verbatim} to the ``multiple-vertices'' situation of the quiver-theoretic YBE, as we shall see.
	
	A weaker version of the quiver-theoretic YBE, called the \textit{set-theoretic dynamical Yang--Baxter equation}\footnote{The original DYBE, in the context of Lie algebras, was introduced in mathematical physics by Gervais and Neveu \cite{gervais1984novel}, then developed by Felder \cite{felder1995conformal}, Etingof and Schiffmann \cite{etingof2001lectures}.} (\textit{set-theoretic DYBE}), was first introduced in the framework of \textit{dynamical sets} \cite{shibu}. Later, a connection between dynamical sets and quivers was established by Matsumoto and Shimizu \cite{matsumotoshimizu}, although the core idea was already sketched by Matsumoto in \cite[\S5]{matsu}. This provides further motivation for the study of the quiver-theoretic YBE.
	
	Another motivation for studying the quiver-theoretic YBMs resides in the fact that they are \textit{partial solutions} to the set-theoretic YBE. These have already raised the interest of other researchers, such as Chouraqui \cite{chouraquiMbraces, chouraquiPartial}.
	
	Solutions to the quiver-theoretic YBE are associated with \textit{structure categories} and \textit{structure groupoids}, which are the main subjects of this paper.
	
	The structure monoid
	(resp.\@ structure group) of a set-theoretic YBM $\sigma$ on a set $X$ is defined as the monoid
	(resp.\@ the group) generated by $X$ modulo the relations
	$$xy\sim x'y'\text{ for all }x,y, x', y'\in X
	\text{ satisfying } (x',y') = \sigma(x,y).$$
	Analogously, the structure category (resp.\@ structure groupoid) of a YBM $\sigma$ on a quiver $\Aa$ is defined as the category (resp.\@ the groupoid) generated by $\Aa$ modulo the relations
	$$x|y\sim \sigma(x|y)\text{ for all }x|y\in \Path_2(\Aa).$$
	More details and notations about presented categories and groupoids will be recalled later in \S\ref{section:Preliminaries}.
	
	Our viewpoint on the structure groupoid is preeminently Garside-theoretic. Garside theory is an approach to normal forms and the word problem in algebraic structures; it emerged from the work of Garside on braid groups \cite{garside1969braid}, and has been successfully applied to many other algebraic objects. Its interplay with the quiver-theoretic YBE is investigated here for the first time, although the main ideas were anticipated in the work of Dehornoy \textit{et al.\@} \cite{dehornoy2015foundations}.
	Our results allow us to construct a class of examples of Garside groupoids. These objects commonly arise both in algebra and in geometry (see for instance \cite{paris2000fundamental}, for a discussion of Garside groupoids arising from hyperplane arrangements).
	
	In the final section of this paper, we shall investigate the special case of \emph{solutions of principal homogeneous type}. We shall prove in Corollary \ref{ternary_operation_and_left_quasigroup} that a \emph{braiding} on a groupoid of pairs $\Gg$ with a distinguished vertex is, in fact, tantamount to a group structure on the set $\Obj(\Gg)$ of vertices of $\Gg$. This builds on an interpretation of \textit{heaps}, defined by Pr\"ufer \cite{prufer1924theorie} and Baer \cite{baer1929einfuhrung}, as an ``affine notion'' of groups; see also \cite{breaz2024heaps,BRZEZINSKITrussesParagons,brzezinski2024affgebras,wagner1953}. We exploit this class of YBMs to construct examples of Garside groupoids.
	
	We hope that our viewpoint helps further advance Garside theory, by providing a class of concrete examples to work with.
	
	\subsection{Scope and structure of the paper}
	In this paper, we prove that the structure groupoid of an involutive non-degenerate quiver-theoretic YBM is Garside. The approach we adopt is derived from \cite{dehornoy2015foundations}: we merge Chouraqui's method with Dehornoy \textit{et al.}'s investigation of weak RC-systems \cite[\S XIV.2]{dehornoy2015foundations}. Furthermore, we exploit our result to present some new examples of Garside groupoids.
	
	The paper is structured as follows:
	\begin{itemize}
		\item[\textbf{\S\ref{section:Preliminaries}}]\textbf{Preliminaries.}\quad Here we survey some foundational concepts in the theory of quivers, presented categories, and Garside theory, and we set up our notations. We do this for making this paper self-contained. However, the entire content of this section can be found in the monograph \cite{dehornoy2015foundations}, which is going to be one of our main references.
		\item[\textbf{\S\ref{section:cyclic}}]\textbf{Weak RC-systems and other cyclic systems.}\quad Here we describe the notion of a weak RC-system, a weak LC-system and a weak RLC-system, and recall some useful results. We give the definition of the \textit{structure category} of a weak RC-system, and prove that it is Garside under some assumptions. This section is mainly drawn from \cite[\S XIV.2]{dehornoy2015foundations}, although we fix some details.
		\item[\textbf{\S\ref{section:structure}}]\textbf{Quiver-theoretic YBMs and their structure categories.}\quad We recall the definition of quiver-theoretic YBMs, and the \textit{involutivity} and \textit{non-degeneracy} conditions. Then, we define the structure category of a YBM.
		\item[\textbf{\S\ref{section:Garside}}]{\textbf{The interplay between weak RC-systems and YBMs.}}\quad In this section, from every left-non-degenerate involutive YBM, we shall construct a left-non-degenerate weak RC-system. As a converse connection, we shall construct YBMs from a suitable class of presented categories, whose relations contain the RC-law. 
		\item[\textbf{\S\ref{sec:Garside}}] {\textbf{On the Garsideness of the structure category of YBMs.}}\quad We prove that, if $\sigma$ is a non-degenerate involutive YBM, its structure category $\Cc(\sigma)$ is isomorphic to the structure category of a suitable weak RC-system. Under these assumptions, we prove that $\Cc(\sigma)$ is perfect Garside, and we give a description of the Garside family. Moreover, the structure groupoid $\Gg(\sigma)$ of $\sigma$ is the same as the enveloping groupoid of $\Cc(\sigma)$, and $\Cc(\sigma)$ is embedded in $\Gg(\sigma)$, thus making $\Gg(\sigma)$ into a Garside groupoid.
		\item[\textbf{\S\ref{sec:examples}}]{\textbf{Examples of solutions and their structure categories.}}\quad Making use of the construction from \S\ref{sec:converse}, we present several examples of YBMs and Garside categories.
		\item[\textbf{\S\ref{sec:PH-type}}]{\textbf{Solutions of principal homogeneous type.}}\quad We recall the definition of solutions of principal homogeneous (PH) type \cite{matsumotoshimizu,shibukawa2007invariance}. We moreover recall the definition of braided groupoids, of principal homogeneous groupoids, and prove that the category of braided principal homogeneous groupoids with a distinguished vertex is equivalent to the category of groups. We finally describe some examples of structure groupoids of involutive non-degenerate YBMs of principal homogeneous type, which turn out, by the previous discourse, to be examples of Garside groupoids.
	\end{itemize}
	\medskip
	
	\noindent\textbf{Acknowledgements.} The authors are grateful to Tomasz Brzezi\'nski and Marino Gran for precious conversations, which inspired \S\ref{subsec:PH-groupoids}; and to Alessandro Ardizzoni, Andrea Maffei and Francesco Sala for their valuable remarks and comments. Furthermore, the authors wish to thank an anonymous referee for many helpful suggestions.
	
	This project was started by the first author (DF) under the supervision of the second author (YS), as part of his master's thesis research. During the early phase of research, the first author was affiliated to Hokkaido University and to the University of Pisa, and supported by the University of Pisa Scholarship for the Achievement of Credits Abroad and Co-Promoted Thesis, AY.\@ 2022/2023. The first author was later supported by the University of Turin through a PNRR DM 118 scholarship, and by the Vrije Universiteit Brussel Bench Fee for a Joint Doctoral Project, grant number OZR4257, and through the FWO Senior Research Project G004124N.
	
	The second author was supported by 
	JSPS KAKENHI Grant Numbers JP17K05187, JP23K03062.
	
	\section{Preliminaries}\label{section:Preliminaries}
	\noindent In this section, we recall some definitions and set some notations that we are going to use extensively in the rest of this paper. All the definitions and results in \S\ref{subsection:presentations}--\ref{subsection:Garside} are extracted from the monograph \cite{dehornoy2015foundations}. The definitions in \S\ref{subsection:quivers} are presented following quite closely \cite{andruskiewitsch2005quiver}.
	\subsection{Quivers}\label{subsection:quivers}
	A quiver is a directed multigraph with loops. More formally:
	\begin{defin}
		A \textit{quiver $Q$ over $\Lambda$} is the datum of a set $\Lambda\neq\emptyset$, a set $Q$, and two set-theoretic functions $\source,\target\colon Q\to \Lambda$, respectively called the \textit{source} and \textit{target} maps. The elements of $\Lambda$ are called \textit{vertices}, and the elements of $Q$ are called \textit{edges} or \textit{arrows}. 
		
		We shall henceforth say that ``$Q$ is a quiver'', implying that the data of $\Lambda$ and of the maps $\source,\target$ are understood. When we want to highlight that $\source,\target$ are the source and target maps of a certain quiver $Q$, we write $\source_Q, \target_Q$ instead. We denote by $\Obj(Q) = \Lambda$ the set of vertices of $Q$.
		
		A \textit{morphism} $f\colon Q\to Q'$ \textit{of quivers over $\Lambda$} is a set-theoretic map from $Q$ to $Q'$ that preserves the sources and the targets: namely, such that $\source_{Q'}(f(x)) = \source_Q(x)$ and $\target_{Q'}(f(x))=\target_Q(x)$ for all $x\in Q$.\footnote{The reader may have encountered a different definition, which is strictly milder, of morphisms between quivers that are allowed to have different sets of vertices. These are the \textit{weak morphisms of quivers}; see Definition \ref{def:weak-morph}.} The category of quivers over $\Lambda$ is denoted by $\Cat{Quiv}_\Lambda$.
	\end{defin}
	\begin{notat}\label{notat:sources-targets}
		Let $Q$ be a quiver as above. If $x\in Q$ has $\source(x)=\lambda$ and $\target(x)=\mu$, we say that $x$ is an arrow from $\lambda$ to $\mu$, and we write $x\colon\lambda\to\mu$. The set of arrows with source $\lambda$ and target $\mu$ is denoted by $Q(\lambda,\mu)$. The set of arrows with source $\lambda$, and any possible target, is denoted by $Q(\lambda,\Lambda)$. Analogously, $Q(\Lambda,\mu)$ denotes the set of arrows with target $\mu$, and any possible source.
	\end{notat}
	Given a quiver $Q$, it is natural to consider \textit{paths} of arrows in $Q$. These have, in turn, a quiver structure.
	\begin{defin}\label{defin:paths}
		Let $Q$ be a quiver over $\Lambda$. A \textit{path of length $n$} in $Q$ ($n\in \N_{>0}$) is a sequence $x_1|\dots|x_n$ of elements $x_i\in Q$, where the target of each $x_i$ equals the source of $x_{i+1}$ (we say that such a sequence is \textit{composable}). A \textit{path of length $0$} is tantamount to specifying a vertex $\lambda$, and it is also called the \textit{empty path on $\lambda$}. We denote it by $\varepsilon_\lambda$.
		
		The source of $x_1|\dots|x_n$ is, by definition, $\source(x_1)$, and the target is by definition $\target(x_n)$. With these source and target maps, the set $\Path_n(Q)$ of paths of length $n$ in $Q$ is, in turn, a quiver over $\Lambda$.
		
		We denote by $\Path(Q) = \bigcup_{n\ge 0}\Path_n(Q)$ the set of all paths in $Q$, of any possible length. This is also a quiver over $\Lambda$,
		with $\source (x_1|\dots |x_n) = \source(x_1)$, $\target(x_1|\dots| x_n) = \target(x_n)$, and $\source(\varepsilon_\lambda)=\target(\varepsilon_\lambda)=\lambda$. Notice that, if we define the composition of two paths $x_1|\dots|x_n$ and $y_1|\dots|y_m$ as the usual concatenation $x_1|\dots|x_n|y_1|\dots|y_m$ whenever $\target(x_n)=\source(y_1)$, then $\Path(Q)$ becomes a category with set of objects $\Lambda$. For all $\lambda\in \Lambda$, the empty path $\varepsilon_\lambda$ plays the role of the identity on $\lambda$.
	\end{defin} 
	Notice that $\Path(Q)$ has a \textit{graded structure}: the composition of a path of length $n$ with a path of length $m$ lies in $\Path_{n+m}(Q)$. 
	
	The category $\Cat{Quiv}_\Lambda$ of quivers over $\Lambda$ is monoidal, with a monoidal product defined as follows.
	\begin{defin}\label{def:tensor-prod}
		Let $Q, Q'$ be quivers over $\Lambda$. The tensor product $Q\otimes Q'$ is the quiver over $\Lambda$ defined as follows:
		\begin{enumerate}
			\item[\textit{i.}] As a set, $Q\otimes Q'$ is the subset of $Q\times Q'$ consisting of the pairs $(x,y)$ with $\target_Q(x)=\source_{Q'}(y)$.
			\item[\textit{ii.}] The source of $(x,y)$ is defined to be $\source_Q(x)$, and the target is defined to be $\target_{Q'}(y)$.
		\end{enumerate}
	\end{defin}
	It is easy to verify (see \cite{matsumotoshimizu}) that $Q\otimes (Q'\otimes Q'')$ and $(Q\otimes Q')\otimes Q''$ are isomorphic. Therefore, it makes sense to define the tensor power $Q^{\otimes n}$.
	\begin{rem}
		Notice that $Q\otimes Q$ is naturally identified with $\Path_2(Q)$. Analogously, $Q^{\otimes n}$ is naturally identified with $\Path_n(Q)$.
	\end{rem}
	\begin{defin}
		The \textit{opposite} of a quiver $Q$ is the quiver $\bar{Q}$, with same vertices and reverted arrows. The \textit{double} $\Double(Q)$ of a quiver $Q$ over $\Lambda$ is the quiver having $\Lambda$ as a set of vertices and, as a set of arrows, the disjoint union $Q\sqcup \bar{Q}$.
	\end{defin}
	\subsection{Generalities about categories}\label{subsection:categories}
	\noindent Here we report, for clarity, some standard definitions and results about categories. The reader may find the fundamentals in category theory covered in any textbook, such as \cite{borceux, categories-for-the-work}.
	
	In the rest of this paper, we shall consistently consider categories as \textit{algebraic objects}, endowed with an associative \textit{binary composition}. Under this viewpoint, we lose interest in the objects of the category, since the ``elements'' that we want to compose are \textit{morphisms}, or \textit{arrows}, of the category.
	
	\begin{conv}
		When we say that $x$ is an \textit{element} of the category $\Cc$, and we write $x\in \Cc$, we shall always mean that $x$ is an arrow in $\Cc$---and \textit{never} mean that it is an object.
	\end{conv}
	
	\begin{notat}
		For a category $\Cc$, we adopt the same conventions as in Notation \ref{notat:sources-targets}. Let $\Lambda = \Obj(\Cc)$ be the class of objects of $\Cc$ and let $\lambda,\mu\in\Lambda$; then we write $f\colon \lambda\to \mu$ for an arrow $f$ from $\lambda$ to $\mu$, and we say that $\lambda$ is the \textit{source} and $\mu$ is the \textit{target} of $f$. The notations $\Cc(\lambda,\mu), \Cc(\lambda,\Lambda), \Cc(\Lambda,\mu)$ also have analogous meaning to those in Notation \ref{notat:sources-targets}. We denote by $\mathbf{1}_\lambda$ the identity of the object $\lambda$, and the subscript will be omitted whenever $\lambda$ is clear from the context.
	\end{notat}
	\begin{conv}
		Unless otherwise specified, here $f\circ g$ denotes the composition of maps from \textit{right to left}, while, from now on, the composition without the symbol `$\circ$' means that we compose from \textit{left to right}: $gf = f\circ g$. This convention will be particularly useful when treating categories as algebraic structures, with an inner operation given by the composition.
	\end{conv}
	
	For the following definitions and lemmas, we refer to 
	\cite[\S II.2]{dehornoy2015foundations}.
	\begin{defin}
		Let $\Cc$ be a category, and let $x,y\in \Cc$. We say that \textit{$x$ left-divides $y$} (resp.\@ \textit{right-divides $y$}) if there exists $z\in\Cc$ such that $xz=y$ (resp.\@ $zx=y$). We denote this left-divisibility (resp.\@ \textit{right-divisibility}) relation by $x\preccurlyeq_L y$ (resp.\@ $x\preccurlyeq_R y $).
		
		We say that $x$ is a \textit{factor} of $y$ if $y = uxv$ for some suitable $u,v\in\Cc$. We denote the factoriality relation by $x\subseteq y$.
		
		A \textit{common right-multiple} (resp.\@ \textit{left-multiple}) of $x$ and $y$ is an arrow $xu = yv$ (resp.\@ $ux = vy$) for some suitable $u,v\in\Cc$. Such $u,v$ need not exist, nor be unique.
		
		Given $x,y\in\Cc$, an element $z\in\Cc$ is said to be a \textit{least common multiple on the right}, \textit{right-lcm} for short (resp.\@ \textit{least common multiple on the left}, \textit{left-lcm} for short) of $x$ and $y$, if $z$ is a common right-multiple $z = xu=yv$ of $x$ and $y$ (resp.\@ a common left-multiple $z = ux=vy$), and moreover $z$ left-divides (resp.\@ right-divides) every common right-multiple (resp.\@ left-multiple) of $x$ and $y$.
	\end{defin}
	\begin{defin}
		We say that a category $\Cc$ admits \textit{conditional right-lcms} (resp.\@ \textit{conditional left-lcms}) if any two elements $x,y\in\Cc$ admitting a common right-multiple (resp.\@ left-multiple) also admit a right-lcm (resp.\@ left-lcm).
	\end{defin}
	\begin{defin}
		A category $\Cc$ is said to be \textit{left-cancellative} (resp.\@ \textit{right-cancellative}) if $fx = fy$ implies $x=y$ (resp.\@ $xf=yf$ implies $x=y$) for all $f,x,y\in\Cc$ such that the compositions make sense.
	\end{defin}
	\begin{defin}\label{complementationop}
		Suppose $\Cc$ is a left-cancellative (resp.\@ right-cancellative) category that admits \textit{unique} conditional right-lcms (resp.\@ left-lcms). Then, given any two elements $x,y\in\Cc$, we define their \textit{complementation on the right} (resp.\@ \textit{on the left}) as the element $x\backslash_R y$ such that $x (x\backslash_R y) = y(y\backslash_R x)$ is the right-lcm of $x$ and $y$, if this right-lcm exists (resp.\@ the element $x\backslash_L y$ such that $ (x\backslash_L y) x= (y\backslash_L x) y$ is the left-lcm of $x$ and $y$, if this left-lcm exists). This complementation is unique by left (resp.\@ right) cancellativity.
	\end{defin}
	\begin{defin}
		An element $a$ of a category $\Cc$ is an \textit{atom} if, for all decompositions of $a$ into a product $a = x_1 x_2\dots x_n$ of elements of $\Cc$, exactly one of the $x_i$'s is non-invertible. This implies in particular that $a$ is non-invertible.
	\end{defin}
	\begin{defin}
		In a category $\Cc$, we say that two elements $x$ and $y$ are $=^\times$-equivalent (resp.\@ ${}^\times\!\!\!=$-equivalent) if $x = y e$ (resp.\@ $x = ey$) holds for some invertible element $e\in\Cc$. We write $x=^\times y$ (resp.\@ $x\; {}^\times\!\!\!=y$). Notice that these are equivalence relations on $\Cc$.
	\end{defin}
	
	For these relations, see \cite[Notation II.1.17]{dehornoy2015foundations}.
	\begin{lem}\label{lem:lcms-unique-up-to...}
		Let $\Cc$ be a left-cancellative (resp.\@ right-cancellative) category. If $x,y\in\Cc$ admit a right-lcm
		(resp.\@ left-lcm) $z$, then every other right-lcm (resp.\@ left-lcm) of $x$ and $y$ is $=^\times$-equivalent (resp.\@ ${}^\times\!\!\!=$-equivalent) to $z$.
	\end{lem}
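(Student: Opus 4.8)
The plan is to exploit the defining property of a right-lcm in both directions and then let left-cancellativity do all the work. Suppose $z$ and $z'$ are both right-lcms of $x$ and $y$. Since $z'$ is in particular a common right-multiple of $x$ and $y$, and $z$ is a right-lcm, the definition gives $z\preccurlyeq_L z'$; by the symmetric argument (swapping the roles of $z$ and $z'$) we also get $z'\preccurlyeq_L z$. Hence there exist $a,b\in\Cc$ with $z' = za$ and $z = z'b$.

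Next I would substitute these two identities into one another to pin down $a$ and $b$. Writing $\mu=\target(z)$ and $\mu'=\target(z')$, the arrows obtained have $a\colon\mu\to\mu'$ and $b\colon\mu'\to\mu$, so the composites below are well defined. From $z = z'b = (za)b = z(ab)$, compared with $z = z\,\mathbf{1}_\mu$, left-cancellativity of $\Cc$ (cancelling the common left factor $z$) yields $ab = \mathbf{1}_\mu$. Running the same manipulation on $z' = za = (z'b)a = z'(ba)$ and comparing with $z' = z'\,\mathbf{1}_{\mu'}$ gives $ba = \mathbf{1}_{\mu'}$. Thus $a$ and $b$ are mutually inverse, so $a$ is invertible, and $z' = za$ then says precisely that $z' =^\times z$. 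The right-cancellative / left-lcm statement follows by the evident left-right dual of this argument.

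The only point that needs care — rather than a genuine obstacle — is the source/target bookkeeping: one must check that the identities inserted for cancellation, $\mathbf{1}_\mu$ and $\mathbf{1}_{\mu'}$, sit at the correct objects, and that left-cancellativity is applied with the common factor ($z$, respectively $z'$) appearing on the \emph{left}. Once the composable structure is tracked correctly, cancellativity forces invertibility immediately, and notably no uniqueness hypothesis on lcms is required for this lemma.
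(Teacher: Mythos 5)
Your proof is correct: both mutual divisibility relations, the bookkeeping of sources and targets, and the two applications of left-cancellativity are all sound, and the dual case does follow by mirroring the argument. Note that the paper does not prove this lemma itself but defers to \cite[Proposition II.2.10]{dehornoy2015foundations}; your argument is precisely the standard one underlying that citation, so there is nothing to compare beyond observing that you have made the deferred proof explicit.
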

	\begin{proof} The result was proven, for right-lcms, in \cite[Proposition II.2.10]{dehornoy2015foundations}. The proof for left-lcms is analogous.
	\end{proof}
	\begin{rem}
		Right-lcms are unique up to deformation by invertible elements: thus the existence of \textit{unique} conditional right-lcms implies that all invertible elements are identities; see \cite[p.\@ 204]{dehornoy2015foundations}.
	\end{rem}
	\subsection{Presented categories and presented groupoids}\label{subsection:presentations}
	\noindent We shall use the notion of \textit{presented categories} extensively, in the rest of the paper. This is a way of describing categories with generators and relations, analogously to group presentations, monoid presentations, etc. However, since (unlike the case of groups) we do not have the notion of ``a quotient of a category by a subcategory'', some more care is needed in this context. Indeed, quotients of categories are defined with respect to \textit{congruence relations}: this is analogous to quotients of monoids or semigroups. Our discourse closely follows \cite[Chapter II]{dehornoy2015foundations}.
	
	\begin{defin}
		A \textit{precategory} $\Pp$ is just the datum of objects and arrows. We do not require the existence of identities, nor the existence of a binary composition.  A \textit{small} precategory, endowed with its source and target maps, is a quiver.
	\end{defin}
	\begin{defin}
		Let $\Cc$ be a category. A \textit{subfamily} $\Ss$ of $\Cc$ is a precategory with same objects as $\Cc$, and arrows whose class is a subclass of the arrows of $\Cc$.	Given a subfamily $\Ss$ of $\Cc$, it makes sense to consider the \textit{generated subcategory}---that is, the smallest subcategory of $\Cc$ including $\Ss$.
	\end{defin}
	\begin{defin}
		A \textit{relation} on a category $\Cc$ is a class of pairs $(x,y)$ of elements $x,y\in\Cc$ with $\source(x) = \source(y)$ and $\target(x)= \target(y)$.
	\end{defin}
	Typically, we will be interested in relations on a path category $\Path(\Pp)$ on a given precategory $\Pp$. These will be the relations of our presentation.
	
	We do not care so much about working with classes, as the rest of the paper will only involve \textit{small categories}. For this reason, we prefer to assume directly, from now on, that our categories and precategories are small, although most of definitions and results may be generalised.
	\begin{defin}
		An equivalence relation $\equiv$ on a (small) category $\Cc$ is called a \textit{congruence} if it is compatible with compositions: i.e., if $x_1\equiv x_2$ and $y_1\equiv y_2$ imply $x_1y_1\equiv x_2y_2$ whenever the composition is defined.
	\end{defin}
	\begin{lem}[{\protect\cite[Lemma II.1.37]{dehornoy2015foundations}}]
		Let $R$ be a  relation on $\Cc$. There exists a unique minimal congruence $\equiv^+_R$ on $\Cc$ such that $x\equiv^+_R y$ for all $(x, y)$ in $R$. This is said to be the congruence generated by $R$.
	\end{lem}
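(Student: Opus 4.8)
The plan is to run the standard universal-algebra argument: realise $\equiv^+_R$ as the intersection of all congruences on $\Cc$ that contain $R$, once one has checked that such congruences exist and that congruences are stable under arbitrary intersection. Existence, minimality and uniqueness of $\equiv^+_R$ will then all follow formally.

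First I would exhibit at least one congruence containing $R$, so that the intersection is taken over a nonempty family. The coarsest candidate is the relation that declares $x\equiv y$ exactly when $\source(x)=\source(y)$ and $\target(x)=\target(y)$. This is manifestly an equivalence relation, and it is compatible with composition because whether a composite is defined, and what its source and target are, depend only on the sources and targets of the factors. Since a relation $R$ on $\Cc$ relates parallel arrows by definition, this coarse congruence contains $R$, witnessing that the family is nonempty.

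Next I would check that an arbitrary intersection of congruences is again a congruence; writing $\equiv^+_R$ for the intersection of all congruences containing $R$, this is the crux of the verification. The intersection of equivalence relations is an equivalence relation, so only compatibility with composition is at issue: if $x_1\equiv^+_R x_2$ and $y_1\equiv^+_R y_2$, then these relations hold in every congruence $\equiv$ containing $R$, and each such $\equiv$, being a congruence, forces $x_1 y_1\equiv x_2 y_2$ whenever both composites are defined; hence $x_1 y_1\equiv^+_R x_2 y_2$ as well. By construction $\equiv^+_R$ then contains $R$, since each congruence being intersected does.

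Finally, minimality and uniqueness are immediate. Any congruence containing $R$ is one of the relations being intersected, hence contains $\equiv^+_R$; so $\equiv^+_R$ is contained in every congruence containing $R$, i.e.\ it is the minimal one, and a smallest element of a partially ordered set is unique whenever it exists. I expect no serious obstacle in this argument; the only points demanding attention are the nonemptiness of the intersected family (which is precisely why the coarse congruence must be written down explicitly) and a consistent reading of the clause ``whenever the composition is defined'' in the definition of congruence, so that the intersection genuinely inherits compatibility with composition.
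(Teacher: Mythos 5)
Your proof is correct. The only point where it could go wrong is the one you flag yourself: the family of congruences containing $R$ must be nonempty, and your witness works precisely because the paper defines a relation on $\Cc$ as a class of pairs of \emph{parallel} arrows (equal sources, equal targets), so the coarse ``parallel arrows'' relation does contain $R$ and is easily checked to be compatible with composition. The intersection argument then delivers existence, minimality and uniqueness formally.

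Note, however, that this is a genuinely different route from the one taken in the cited source and implicitly relied on in this paper. Your argument is the top-down one: $\equiv^+_R$ is \emph{defined} as the intersection of all congruences containing $R$. The treatment in \cite[Lemma II.1.37]{dehornoy2015foundations}, reflected in Remark \ref{rem:congruence} of the paper, is bottom-up: for a path category $\Path(\Pp)$ one declares $p\equiv^+_R q$ when $p$ can be transformed into $q$ by finitely many elementary steps $p_1|\alpha|p_2 \rightsquigarrow p_1|\beta|p_2$ with $(\alpha,\beta)$ or $(\beta,\alpha)$ in $R$, and then verifies that this closure is a congruence and is contained in any congruence containing $R$. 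The two constructions produce the same relation, but they buy different things. Your version is shorter and works for an arbitrary relation on an arbitrary (small) category, with no combinatorial bookkeeping; its drawback is that it gives no usable criterion for deciding when $x\equiv^+_R y$. The constructive version is exactly what the rest of the paper needs: the proofs of Proposition \ref{prop:Garside-family} and Theorem \ref{thm:converse} repeatedly argue about \emph{which} elementary replacements generate a congruence (e.g.\ that trivial relations ``have no effect''), and such arguments require the step-by-step description, not just abstract minimality. So your proof establishes the lemma as stated, but if you intend to use the lemma the way this paper does, you should also record that the intersection coincides with the rewriting closure --- that equivalence is the real content of the cited result.
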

	\begin{rem}\label{rem:congruence}
		For the path category $\Path(\Pp)$ on a precategory $\Pp$, the generated congruence relation $\equiv^+_R$ can be described in an alternative way which is often practically useful \cite[Lemma II.1.37]{dehornoy2015foundations}. Two paths $p,q\in\Path(\Pp)$ are $\equiv^+_R$-equivalent if and only if we can bring $p$ into $q$ in a finite number of steps, where each step is the application of a relation of the form $p_1| \alpha| p_2 \sim p_1|\beta |p_2$, where $(\alpha,\beta)$ or $(\beta, \alpha)$ lies in $R$.
	\end{rem}
	A \textit{category presentation} is a pair $(\Pp, R)$, where $\Pp$ is a precategory, and $R$ is a relation on $\Path(\Pp)$. The elements of $\Pp$ are called the \textit{generators} of this presentation.
	\begin{defin}
		Let $\Pp$ be a precategory, and let $R$ be a relation on $\Path(\Pp)$. The corresponding \textit{presented category}, denoted by $\langle \Pp \mid R\rangle^+$, is defined as follows:
		\begin{enumerate}
			\item[\textit{i.}] the objects are the objects of $\Pp$;
			\item[\textit{ii.}] the arrows are the equivalence classes in the quotient $\Path(\Pp)/\equiv^+_R$;
			\item[\textit{iii.}] the identities are the equivalence classes of the identities of $\Path(\Pp)$;
			\item[\textit{iv.}] the composition is the operation induced by the composition of $\Path(\Pp)$ modulo $\equiv^+_R$: this is well defined because $\equiv^+_R$ respects the composition.
		\end{enumerate}
		The fact that $\langle \Pp \mid R\rangle^+$ is a category is an easy verification
		(See also \cite[\S II.8]{categories-for-the-work}). We say that the pair $(\Pp,R)$ is a \textit{positive presentation} of $\langle \Pp \mid R\rangle^+$.
	\end{defin}
	\begin{conv}
		When a relation $R$ is clear from the context, we write $x\sim y$ for the pair $(x,y)\in R$. Abusing terminology, we sometimes say that $x\sim y$ is ``a relation''.
	\end{conv}
	\begin{rem}Every category $\Cc$ admits the trivial presentation $\Cc\cong \langle\Cc\mid \Rel(\Cc)\rangle^+$, where $\Rel(\Cc)$ is generated by the relations\footnote{If we include all relations of the form $x|y\sim xy$, then the generated congruence clearly includes all the relations of the form $x_1|\dots|x_r\sim x_1\dots x_r$.} $x_1|x_2|\dots|x_r\sim  x_1 x_2\dots x_r$ 
		in $\Path(\Cc)$ for all $x_1|\dots |x_r\in\Path(\Cc)$, and the relations $\mathbf{1}_\lambda\sim \varepsilon_\lambda$
		for all $\mathbf{1}_\lambda\in\mathbf{1}_\Cc$; where
		$\mathbf{1}_\Cc$ is the family consisting of all the identity elements of the category $\Cc$, and $\varepsilon_\lambda$ denotes the empty path on an object $\lambda$.\end{rem}
	In a presentation $\Cc = \langle \Pp\mid R\rangle^+$, an \textit{$\varepsilon$-relation} is a relation of the form $p \sim \varepsilon_{\source(p)}$ or $\varepsilon_{\source(p)}\sim p$, where $p$ is a non-empty path.
	\begin{lem}[{\protect\cite[Lemma II.1.42]{dehornoy2015foundations}}]\label{lem:epsilon-rel}
		If $\Cc$ admits a presentation $\Cc = \langle \Pp\mid R\rangle^+$ where $R$ contains no $\varepsilon$-relations, then $\Cc$ has no nontrivial invertible elements.
	\end{lem}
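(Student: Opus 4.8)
The plan is to reduce the statement to a purely combinatorial claim about the rewriting of paths, and then run a length argument. First I would unwind the notion of invertibility: if $f\in\Cc$ is invertible with inverse $g$, choose representing paths $p,q\in\Path(\Pp)$, so that $f=[p]$ and $g=[q]$ in $\Path(\Pp)/\equiv^+_R$. Under the left-to-right composition convention, the identity $fg=\mathbf 1_{\source(f)}$ then reads, at the level of paths, as $p|q\equiv^+_R\varepsilon_{\source(f)}$. Hence the whole lemma reduces to the following claim: \emph{no non-empty path of $\Path(\Pp)$ is $\equiv^+_R$-equivalent to an empty path}. Granting this, $p|q$ must itself be empty, so $p$ is empty, whence $f=[\varepsilon_{\source(f)}]=\mathbf 1_{\source(f)}$ is an identity; that is, a trivial invertible element.

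To prove the claim I would invoke the concrete description of the generated congruence from Remark \ref{rem:congruence}: $p\equiv^+_R q$ holds iff $p$ can be carried to $q$ by finitely many elementary steps $u|\alpha|v\rightsquigarrow u|\beta|v$ with $(\alpha,\beta)$ or $(\beta,\alpha)$ in $R$. Let $\ell$ denote the length of a path (the number of arrows it contains); this is a well-defined $\N$-valued function on $\Path(\Pp)$, and $\ell(r)=0$ characterises exactly the empty paths. Suppose, towards a contradiction, that some non-empty path $p_0$ satisfies $p_0\equiv^+_R\varepsilon_\lambda$, and fix a rewriting sequence $p_0\rightsquigarrow p_1\rightsquigarrow\dots\rightsquigarrow p_n=\varepsilon_\lambda$. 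Since $\ell(p_0)>0$ and $\ell(p_n)=0$, there is a least index $i$ with $\ell(p_i)=0$, and then $\ell(p_{i-1})>0$ while $\ell(p_i)=0$.

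The decisive point, and the only place where the hypothesis on $R$ is used, is the analysis of this single step. Writing $p_{i-1}=u|\alpha|v$ and $p_i=u|\beta|v$, the equality $\ell(p_i)=0$ forces $\ell(u)=\ell(\beta)=\ell(v)=0$, so $\beta=\varepsilon_\mu$ is empty; on the other hand $\ell(p_{i-1})=\ell(\alpha)>0$ forces $\alpha$ to be non-empty. But then the relation connecting $\alpha$ and $\beta$ is precisely an $\varepsilon$-relation, contradicting the hypothesis that $R$ contains none. This contradiction establishes the claim, and with it the lemma.

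The main obstacle to anticipate is that the relations in $R$ need not be length-homogeneous, so $\ell$ does \emph{not} descend to a well-defined function on $\Cc$, and one cannot simply assert that ``invertible elements have length $0$''. The argument sidesteps this by never passing to $\Cc$ at all: it works entirely with paths, tracking length along an explicit rewriting sequence, and exploits the fact that the absence of $\varepsilon$-relations is exactly what prevents a strictly positive length from collapsing to $0$ in a single elementary step. The remaining bookkeeping is routine: checking that a step alters the length only by $\ell(\beta)-\ell(\alpha)$, and confirming that ``empty path'' and ``length $0$'' genuinely coincide, with due care for inserted identity segments $\varepsilon_\mu$.
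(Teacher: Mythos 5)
Your proof is correct, and note that the paper does not actually prove this lemma: it is stated with a citation to Dehornoy \emph{et al.}\ (Lemma II.1.42 of \cite{dehornoy2015foundations}), so there is no in-paper argument to compare against. Your route --- reduce invertibility of $f=[p]$ to the claim that no non-empty path is $\equiv^+_R$-equivalent to an empty path, then track lengths along an elementary rewriting sequence (Remark \ref{rem:congruence}) and observe that the first step reaching length $0$ would have to apply an $\varepsilon$-relation --- is exactly the standard argument behind that citation; the only streamlining available is to note that, in the absence of $\varepsilon$-relations, no elementary step applies to an empty path at all, so the $\equiv^+_R$-class of $\varepsilon_\lambda$ is the singleton $\{\varepsilon_\lambda\}$, which yields your claim without the minimal-index bookkeeping.
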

	If the precategory $\Pp$ has only one object, the category $\langle \Pp\mid R\rangle^+$ is a monoid. 
	\begin{notat}
		For any category or precategory $\Pp$, we use the notation $\bar{\Pp}$ (instead of the usual $\opp{\Pp}$) to denote its \textit{opposite}, and for $x\in\Pp$ we denote by $\bar{x}\in \bar{\Pp}$ its reverse. We denote by $\Double(\Pp)=\Pp\sqcup \bar{\Pp}$ the \textit{double} of $\Pp$---thus adopting the same notation as for quivers.
	\end{notat}
	Notice that, in the double $\Double(\Pp)$, we do not introduce any relation of the form $x\bar{x}\sim\varepsilon_{\source(x)}$ or $\bar{x}x\sim \varepsilon_{\target(x)}$---indeed, for precategories, compositions and identities need not even be defined. Moreover, notice that, if $x$ is a loop in $\Pp$, we do \textit{not} consider $x$ to be its own reverse: thus $x\neq \bar{x}$ always holds, and every loop is counted \textit{twice} in $\Double(\Pp)$. 
	
	\begin{defin}A \textit{groupoid} is a category (which we assume to be small) in which all morphisms are isomorphisms. Equivalently, it is a quiver $\Gg$ over $\Lambda$, with a morphism of quivers $\Gg\otimes \Gg\to \Gg$, $a|b\mapsto ab$, called the \textit{multiplication}, and a family of loops $\mathbf{1}_\lambda$ on each $\lambda\in\Lambda$, called the \textit{units}, such that the multiplication is associative, for all $a\in \Gg$ one has $a\mathbf{1}_{\target(a)} = a$ and $\mathbf{1}_{\source(a)} a=a$, and for every $a\in\Gg$ there is an \textit{inverse} $a^{-1}\in\Gg$ satisfying $aa^{-1} = \mathbf{1}_{\source(a)}$, $a^{-1} a = \mathbf{1}_{\target(a)}$.\end{defin}
	
	The following definition generalises the notion of a \textit{presented group}.
	\begin{defin}
		Let $\Pp$ be a precategory, and let $R$ be a set of relations on $\Path(\Pp)$. The corresponding \textit{presented groupoid}, denoted by $\langle \Pp \mid R\rangle$, is the category
		\[\langle \Double(\Pp)\mid R\cup F\rangle^+,\]
		where $F$ denotes the set of relations $x|\bar{x}\sim \varepsilon_{\source(x)}$, $\bar{x}|x\sim \varepsilon_{\target(
			x)}$ for all $x\in \Pp$. This is a groupoid: indeed, the inverse of the class of a path $x_1|\dots|x_r$ is the class of the path $\bar{x}_r|\dots|\bar{x}_1$.
	\end{defin}
	\begin{defin}\label{defin:free-groupoid}
		Given a precategory $\Pp$, the \textit{free groupoid} on $\Pp$ is defined as 
		$$\Free(\Pp) = \langle \Pp\mid \emptyset\rangle= \langle \Double(\Pp)\mid F\rangle^+.$$
	\end{defin}
	\subsection{Noetherianity}\label{subsection:noetherianity}
	Let $\Cc$ be a category. A binary relation $\prec$ on $\Cc$ is \textit{well-founded} if every nonempty subfamily $\Ss$ of $\Cc$ has a $\prec$-minimal element---i.e., an element $x\in\Ss$ such that, if $y\prec x$, then $y\notin \Ss$.
	\begin{defin}
		A category is left-Noetherian (resp.\@ right-Noetherian) if the relation $\prec_L$ (resp.\@ $\prec_R$) is well-founded.
		Here, $x\prec_L y$ (resp.\@ $x\prec_R y$) means that $xy'=y$ (resp.\@ $y'x=y$) for some $y'$ that is not invertible.
		
		A category is Noetherian if the relation of proper factoriality $\subset$ is well-founded.
		Here, $x\subset y$ means that $y'xy''=y$ and at least one of $y', y''$ is non-invertible.
	\end{defin}
	\begin{rem}
		If a left-cancellative category is both left- and right-Noetherian, then it is also Noetherian  \cite[Proposition II.2.29]{dehornoy2015foundations}.  The proof relies on a characterisation of right-Noetherianity via increasing sequences of left-divisibility relations \cite[Proposition II.2.28]{dehornoy2015foundations}, and this does not work without assuming left-cancellativity \cite[Exercise II.12]{dehornoy2015foundations}.
	\end{rem}
	A relation $x_1|\dots| x_n\sim y_1|\dots| y_m$ is called \textit{homogeneous} if $n=m$, and a \textit{homogeneous presentation} is a presentation in which all relations are homogeneous. We refer to \cite[Propositions II.2.32 and II.2.33]{dehornoy2015foundations} for the proof of the following:
	\begin{prop}\label{prop:homogeneous-implies-noetherian}
		If a category $\Cc$ admits a homogeneous presentation $\Cc \cong \langle \Pp\mid R\rangle^+$, then it is Noetherian.
	\end{prop}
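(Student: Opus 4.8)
The plan is to exploit homogeneity to equip $\Cc$ with a well-defined $\N$-valued length function, and then to observe that proper factoriality strictly decreases this length, so that the well-foundedness of $\subset$ reduces to the well-ordering of $\N$.

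First I would construct the length function. Every arrow of $\Cc$ is the $\equiv^+_R$-class of some path $p = x_1|\dots|x_n \in \Path(\Pp)$, and I set $\ell([p]) := n$. To see that this is well-defined, I would invoke the description of the generated congruence in Remark \ref{rem:congruence}: two paths are $\equiv^+_R$-equivalent precisely when one can be transformed into the other by finitely many elementary steps $p_1|\alpha|p_2 \sim p_1|\beta|p_2$, where $(\alpha,\beta)$ or $(\beta,\alpha)$ lies in $R$. Since every relation of $R$ is homogeneous, each such step replaces $\alpha$ by some $\beta$ of the same length, and hence leaves the total length of the path unchanged. Therefore all representatives of a given arrow share the same length, so $\ell$ is well-defined. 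It is moreover additive, $\ell(xy) = \ell(x) + \ell(y)$, because concatenation of paths adds lengths.

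Next I would pin down the invertible elements. A homogeneous presentation contains no $\varepsilon$-relation, since such a relation would equate a nonempty path (of positive length) with an empty path (of length $0$), contradicting homogeneity. By Lemma \ref{lem:epsilon-rel}, $\Cc$ therefore has no nontrivial invertible elements: the invertible arrows are exactly the identities, and these are exactly the arrows of length $0$. In particular, any non-invertible arrow has length $\geq 1$.

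Finally I would conclude. Suppose $x \subset y$, say $y = y' x y''$ with at least one of $y', y''$ non-invertible. By additivity, $\ell(y) = \ell(y') + \ell(x) + \ell(y'')$, and since one of $y', y''$ has length $\geq 1$, we obtain $\ell(y) > \ell(x)$. Thus $\subset$ strictly decreases the $\N$-valued quantity $\ell$, so any nonempty subfamily $\Ss$ of $\Cc$ contains an arrow of minimal length, and such an arrow is necessarily $\subset$-minimal in $\Ss$. Hence $\subset$ is well-founded and $\Cc$ is Noetherian; the identical length estimate applies verbatim to $\prec_L$ and $\prec_R$, yielding left- and right-Noetherianity as well. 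I expect the only genuine subtlety to be the well-definedness of $\ell$, which is precisely where homogeneity is used; everything afterwards is bookkeeping with the additive length.
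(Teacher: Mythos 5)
Your proof is correct. The paper gives no argument of its own for this proposition---it simply cites \cite[Propositions II.2.32 and II.2.33]{dehornoy2015foundations}---but the cited proof is precisely your length-function argument: homogeneity makes path length a well-defined, additive invariant on $\Cc$, no $\varepsilon$-relations means non-invertible elements have positive length, so $\ell$ strictly increases under proper factoriality and well-foundedness of $\subset$ follows from the well-ordering of $\N$.
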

	\subsection{Complemented presentations}\label{subsection:complement}
	A \textit{complement} for a presentation is, informally speaking, a way to find paths that ``complete'' an element to the smallest path that appears in some relation. This turns out to be a way to find common multiples.
	\begin{defin}
		A category presentation $(\Pp, R)$ is \textit{right-complemented} if the following conditions hold.
		\begin{enumerate}
			\item[\textit{i.}] The set $R$ contains no $\varepsilon$-relations.
			\item[\textit{ii.}] The set $R$ contains no relations of the form $x|\ldots \sim x|\ldots$
			for $x\in\Pp$.
			\item[\textit{iii.}] For all $x,y\in \Pp$, $x\neq y$, the set $R$ contains at most one relation of the form $x|\ldots \sim y|\ldots$.
		\end{enumerate}
		In the relations considered above, the paths occurring can have arbitrary length. 
		
		There is a partially defined map $\vartheta\colon \Pp\times \Pp\to \Path(\Pp)$, sending $(x,y)$ to the unique $\vartheta(x,y)\in\Path(\Pp)$ such that $x|\vartheta(x,y)\sim y|\vartheta(y,x)$ lies in $R$. By definition, $\vartheta(x, x)=\varepsilon_{\target(x)}$ for all $x\in\Pp$. We say that $\vartheta$ is a \textit{syntactic right-complement} for the presentation.
		
		We say that the presentation is \textit{short right-complemented} if it is right-complemented, and the syntactic right complement $\vartheta$, on every pair $(x,y)$, either takes values in $\Pp$ (identified with $\Path_1(\Pp)$) or is undefined. In this case, we call such a $\vartheta\colon \Pp\times \Pp\to \Pp$ a \textit{short syntactic right-complement}.
	\end{defin}
	The following is a specialisation of \cite[Lemma II.4.6]{dehornoy2015foundations}.
	\begin{lem}\label{lem:extension-to-vartheta*}
		Given a short right-complemented presentation $(\Pp, R)$ with a short syntactic right-complement $\vartheta\colon \Pp\times \Pp\to\Pp\cup\{\varepsilon_\lambda\mid\lambda\in\Lambda\}$, there exists a unique extension $\vartheta^*\colon \Path(\Pp)\times \Path(\Pp)\to \Path(\Pp)$ of $\vartheta$, such that the following conditions are satisfied:
		\begin{enumerate}
			\item[\textit{i.}] $\vartheta^*(x,x) = \varepsilon_{\target(x)}$ for all $x\in\Pp$;
			\item[\textit{ii.}] $\vartheta^*(p|q, r) = \vartheta^*(q, \vartheta^*(p,r))$ for all suitable $p,q,r\in\Path(\Pp)$ (see Figure \ref{subfig:rel2});
			\item[\textit{iii.}] $\vartheta^*(p, q|r) = \vartheta^*(p,q)|\vartheta^*(\vartheta^*(q,p),r)$ for all suitable $p,q,r\in\Path(\Pp)$ (see Figure \ref{subfig:rel3});
			\item[\textit{iv.}] $\vartheta^*(\varepsilon_{\source(p)}, p) = p$ and $\vartheta^*(p,\varepsilon_{\source(p)}) =\varepsilon_{\target(p)}$ for all $p\in\Path(\Pp)$.
		\end{enumerate}
		Moreover, this map $\vartheta^*$ is such that $\vartheta^*(p,q)$ is defined if and only if $\vartheta^*(q,p)$ is defined.
	\end{lem}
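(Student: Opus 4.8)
The plan is to construct $\vartheta^*$ explicitly by recursion on the lengths of its two arguments, reading conditions (ii)--(iv) as rewriting rules that peel off the first letter of one argument, and then to prove that the resulting map satisfies all four conditions in full generality. Concretely, I would set $\vartheta^*(\varepsilon_\lambda, q) = q$ and $\vartheta^*(p, \varepsilon_\lambda) = \varepsilon_{\target(p)}$ as forced by (iv); put $\vartheta^*(x,y) = \vartheta(x,y)$ on single arrows, with (i) the diagonal instance $\vartheta(x,x) = \varepsilon_{\target(x)}$; for a single arrow $a$ and a path $q = b|q'$ set $\vartheta^*(a, b|q') = \vartheta(a,b)|\vartheta^*(\vartheta(b,a), q')$, following (iii); and for a longer first argument $p = a|p'$ set $\vartheta^*(a|p', q) = \vartheta^*(p', \vartheta^*(a,q))$, following (ii). Each right-hand side invokes $\vartheta^*$ only on arguments of strictly smaller total length, so the recursion terminates: the primary induction is on $|p|$, with a secondary induction on $|q|$ in the layer $|p|=1$.

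Uniqueness is then essentially free: conditions (i)--(iv), read left to right, force the value of $\vartheta^*(p,q)$ to be computed by exactly this recursion, so any two extensions satisfying them coincide by the same well-founded induction.

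The real content is existence, namely verifying that the map just defined satisfies (ii) and (iii) for \emph{all} decompositions, and not only the first-letter splits used in its definition. I would first establish (ii) in full by induction on $|p|$: the cases $|p|\le 1$ are definitional, while for $p = a|p''$ one rewrites $\vartheta^*(a|(p''|q), r)$ by the $|p|=1$ instance, applies the induction hypothesis to $p''$, and folds the result back through the definition of $\vartheta^*(a|p'', r)$, using only associativity of concatenation. With (ii) in hand I would prove (iii) in two stages: first for a second argument whose split has a single-arrow left part, by induction on $|p|$ (here (ii) is used repeatedly to re-express the terms $\vartheta^*(b,p)$ and $\vartheta^*(\vartheta(a,b),p')$ that appear), and then in general by induction on the length of that left part, each step reducing to the single-arrow case plus the induction hypothesis. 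This nested induction --- and in particular the bookkeeping that matches the two sides of (iii) once the first argument is long --- is the main obstacle, because the recursion is biased towards peeling the \emph{first} argument whereas (iii) peels the \emph{second}, so the compatibility of the two operations is exactly what must be proved. The guiding intuition, which can also be turned into a self-contained proof, is that $\vartheta^*(p,q)$ is computed by \emph{tiling} a rectangle whose left and top edges are $p$ and $q$: filling each unit cell from its left edge $v$ and top edge $u$ by the local relation $v|\vartheta(v,u) \sim u|\vartheta(u,v)$, the bottom edge of the completed grid is $\vartheta^*(p,q)$ and the right edge is $\vartheta^*(q,p)$, and conditions (ii) and (iii) become, respectively, the vertical and horizontal subdivision of the grid.

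Finally, the symmetry of definedness descends from the symmetry already present at the level of $\vartheta$: the arrows $\vartheta(x,y)$ and $\vartheta(y,x)$ are the two sides of a single relation of $R$, so one is defined exactly when the other is. Propagating this through the recursion --- equivalently, observing that the tiling for $(q,p)$ is the transpose of the tiling for $(p,q)$, as an immediate induction identifying the top and left edges of the transposed cells with the left and top edges of the original ones shows --- yields that each cell of one grid is fillable precisely when the matching cell of the other is, so that $\vartheta^*(p,q)$ is defined if and only if $\vartheta^*(q,p)$ is.
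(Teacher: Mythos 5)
Your proof is correct, and it supplies precisely what the paper leaves out: the paper states this lemma with no proof of its own, deferring to \cite[Lemma II.4.6]{dehornoy2015foundations}, so the relevant comparison is with that cited argument, which your construction essentially reproduces in self-contained form. Your recursion --- peel the first letter of the first argument via \textit{ii}, and within the layer $|p|=1$ peel the second argument via \textit{iii} --- is exactly the column-by-column filling of the rectangular grid, and your order of verification is the right one given the bias of the recursion: first \textit{ii} in full generality (a short induction on $|p|$, since $\vartheta^*((a|p'')|q,r)=\vartheta^*(p''|q,\vartheta^*(a,r))$ unfolds by definition and refolds by the inductive hypothesis), then \textit{iii} for single-arrow left parts, whose inductive step indeed needs the already-established \textit{ii} to rewrite $\vartheta^*\bigl(\vartheta(b,a)|\vartheta^*(\vartheta(a,b),p'),\,r\bigr)$ as $\vartheta^*\bigl(\vartheta^*(\vartheta(a,b),p'),\,\vartheta^*(\vartheta(b,a),r)\bigr)$, and finally \textit{iii} in general by induction on the length of the left part; I checked that all three inductions close up as you claim, and that uniqueness and condition \textit{i} are definitional. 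Two bookkeeping points should be made explicit in a written-up version: the values $\vartheta(a,b)$, $\vartheta(b,a)$ may be empty paths (when $a=b$), so the ``single-arrow'' cases of your inductions must silently include the empty-path case, which your clauses coming from \textit{iv} do cover; and every identity must be read as a Kleene equality (each side defined if and only if the other is, and then equal), which your recursion respects since each rewriting step preserves definedness. Your transposition argument for the final symmetry claim is also sound, because $\vartheta(x,y)$ and $\vartheta(y,x)$ arise from one and the same relation of $R$ and are therefore defined simultaneously, and this propagates cell by cell to the transposed grid.
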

	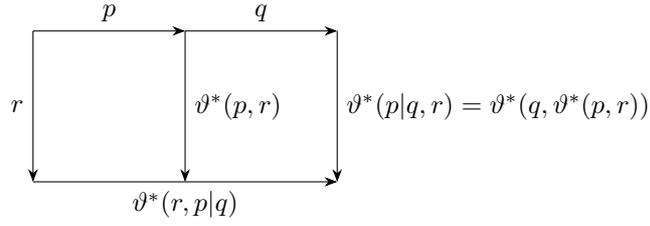
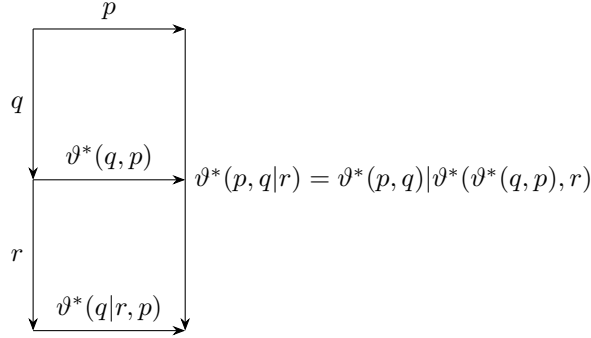
\begin{figure}
		\centering
		\begin{subfigure}{0.5\linewidth}
			\begin{tikzpicture}
				\draw[-Stealth] (-2,1) to node[above] {$p$} (0,1);
				\draw[-Stealth] (0,1) to node[above] {$q$} (2,1);
				\draw[-Stealth] (2,1) to node[right] {$\vartheta^*(p|q,r)=\vartheta^*(q,\vartheta^*(p,r))$} (2,-1);
				\draw[-Stealth] (-2,1) to node[left] {$r$} (-2,-1);
				\draw[-Stealth] (-2,-1) to node[below] {$\vartheta^*(r,p|q)$} (2,-1);
				\draw[-Stealth] (0,1) to node[right] {$\vartheta^*(p,r)$} (0,-1);
			\end{tikzpicture}
			\caption{The relation \textit{ii} of Lemma \ref{lem:extension-to-vartheta*} for $\vartheta^*$.}\label{subfig:rel2}
		\end{subfigure}
		\begin{subfigure}{0.5\linewidth}
			\begin{tikzpicture}
				\draw[-Stealth] (-1,2) to node[above] {$p$} (1,2);
				\draw[-Stealth] (1,2) to node[right] {$\vartheta^*(p,q|r) = \vartheta^*(p,q)|\vartheta^*(\vartheta^*(q,p),r)$} (1,-2);
				\draw[-Stealth] (-1,2) to node[left] {$q$} (-1,0);
				\draw[-Stealth] (-1,0) to node[left] {$r$} (-1,-2);
				\draw[-Stealth] (-1,0) to node[above] {$\vartheta^*(q,p)$} (1,0);
				\draw[-Stealth] (-1,-2) to node[above] {$\vartheta^*(q|r,p)$} (1,-2);
			\end{tikzpicture}
			\caption{The relation \textit{iii} of Lemma \ref{lem:extension-to-vartheta*} for $\vartheta^*$.}\label{subfig:rel3}
		\end{subfigure}
		\caption{A graphic interpretation of the relations \textit{ii} and \textit{iii} of Lemma \ref{lem:extension-to-vartheta*}, understood as consistency relations on a grid.}
	\end{figure}
	
	Given a short right-complement $\vartheta$, we define $\vartheta^*_3(p, q, r)=\vartheta^*(\vartheta^*(p, q), \vartheta^*(p, r))$
	for suitable $p, q, r\in\Path(\Pp)$.
	\begin{defin}We say that a short right-complemented presentation with a syntactic right-complement $\vartheta$ satisfies the \textit{sharp $\vartheta$-cube
			condition} if 
		$$\vartheta^*_3(p,q,r) = \vartheta^*_3(q,p,r)$$
		holds for all $p,q,r\in\Path(\Pp)$ such that both sides of the above equation are defined; and, if the left-hand side is not defined, neither is the right-hand side.
		
		We say that the sharp $\vartheta$-cube condition is true on a subfamily $\Ss$
		of $\Path(\Pp)$, if the above condition is true for all $(p,q,r)\in\Ss\times \Ss\times \Ss$ sharing the same source.\end{defin}
	\begin{prop}\label{prop:complemented-implies-lcms}
		Let $(\Pp, R)$ be a short right-complemented presentation, with a syntactic right-complement $\vartheta$. Suppose that the sharp $\vartheta$-cube condition is true for all triples of pairwise distinct elements of $\Pp$ with same source. Then, the presented category $\langle \Pp\mid R\rangle^+$ is left-cancellative, it admits conditional right-lcms, and the complementation operation $($on the right$)$ of the elements represented by $p, q\in\Path(\Pp)$
		is given by the element represented by $\vartheta^*(p,q)$
		$($For the complementation operation, see Definition \ref{complementationop}$)$.
		The right-lcm is thus the element represented by $p\,\vartheta^*(p, q)$.
	\end{prop}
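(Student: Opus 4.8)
The plan is to derive all three conclusions from the theory of subword (right-)reversing of \cite[\S II.4]{dehornoy2015foundations}. The syntactic right-complement $\vartheta$ defines a right-reversing process on paths in $\Double(\Pp)$: a signed length-two path $\bar{x}|y$, with $x,y\in\Pp$ sharing a source, is rewritten as $\vartheta(x,y)|\overline{\vartheta(y,x)}$ by means of the defining relation $x|\vartheta(x,y)\sim y|\vartheta(y,x)$ in $R$. Iterating this rewriting on a word $\bar{p}|q$, with $p,q\in\Path(\Pp)$ positive, either aborts or terminates in a word of the form $u|\bar{v}$; and the content of Lemma \ref{lem:extension-to-vartheta*} is precisely that, upon termination, $u=\vartheta^*(p,q)$ and $v=\vartheta^*(q,p)$. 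Thus $\vartheta^*$ \emph{is} the computation of right-reversing, and the recursions \textit{ii} and \textit{iii} of that lemma record the way reversing interacts with concatenation.

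The first real step is to upgrade the cube condition from generators to all of $\Path(\Pp)$. Our hypothesis supplies the identity $\vartheta^*_3(p,q,r)=\vartheta^*_3(q,p,r)$ only for pairwise distinct $p,q,r\in\Pp$. When two of the three generators coincide, the identity holds automatically: using the defining value $\vartheta(x,x)=\varepsilon_{\target(x)}$ together with the boundary relations (property \textit{iv} of Lemma \ref{lem:extension-to-vartheta*}), both sides collapse to the same expression, and the definedness clause is equally immediate. Having thereby settled the cube condition on all triples of \emph{generators}, I would propagate it to arbitrary triples of \emph{paths} by induction on $|p|+|q|+|r|$, peeling off one generator at a time via the recursions \textit{ii} and \textit{iii}; each inductive step rewrites both sides of the desired identity as compositions of strictly shorter instances of the cube condition already known to hold. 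This is the ``cube condition on atoms suffices'' argument of \cite[\S II.4]{dehornoy2015foundations}, specialised to the short-complemented setting.

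Once the cube condition holds on all of $\Path(\Pp)$, right-reversing is \emph{complete} in the sense of \cite[\S II.4]{dehornoy2015foundations}: reversing $\bar{p}|q$ succeeds exactly when $p$ and $q$ admit a common right-multiple, and it then outputs the pair $(\vartheta^*(p,q),\vartheta^*(q,p))$. The three assertions now follow formally. Left-cancellativity of $\langle \Pp\mid R\rangle^+$ is a standard consequence of completeness together with the absence of $\varepsilon$-relations (clause \textit{i} of right-complementedness). For conditional right-lcms, if $p,q$ share a common right-multiple then completeness forces reversing to terminate, whereupon $p\,\vartheta^*(p,q)\sim q\,\vartheta^*(q,p)$ is a common right-multiple that left-divides every other one, hence a right-lcm. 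Finally, by the very definition of complementation in Definition \ref{complementationop}, this right-lcm is witnessed by $p\backslash_R q=\vartheta^*(p,q)$, as claimed.

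The main obstacle is the inductive propagation in the second paragraph. The delicate point is not the algebraic identity itself but the bookkeeping of \emph{definedness}: the sharp cube condition requires the two sides to be defined simultaneously, so at each inductive step one must check that $\vartheta^*$ is defined on the relevant shorter arguments exactly when it is defined on the longer ones. Here the symmetry of definedness recorded at the end of Lemma \ref{lem:extension-to-vartheta*}---namely that $\vartheta^*(p,q)$ is defined if and only if $\vartheta^*(q,p)$ is---is what allows the argument to close up; controlling the interplay between this symmetry and the two recursions \textit{ii}, \textit{iii} is precisely where the care is needed.
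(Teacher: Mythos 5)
Your proposal is correct, but note that the paper offers no argument of its own for this statement: its proof is the single line ``See \cite[Proposition II.4.16]{dehornoy2015foundations}.'' Your sketch is essentially an unrolling of that cited argument---right-reversing computed by $\vartheta^*$, the automatic degenerate cases of the cube condition, its propagation from generators to arbitrary paths, and completeness of reversing yielding left-cancellativity, conditional right-lcms, and the formula $p\backslash_R q=\vartheta^*(p,q)$---so it takes essentially the same route as the source the paper relies on.
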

	\begin{proof}
		See \cite[Proposition II.4.16]{dehornoy2015foundations}.
	\end{proof}

\subsection{Enveloping groupoids and the Ore criterion}\label{subsection:Ore}
A groupoid is a category in which every arrow is an isomorphism. Given a category $\Cc$, there is a ``smallest'' groupoid $\Env(\Cc)$ such that there exists a functor $\Cc\to\Env(\Cc)$:
\begin{defin}\label{def:envGpd}
	Let $\Cc$ be a category. The \textit{enveloping groupoid} $\Env(\Cc)$ is determined, up to isomorphism, by the following universal property: $\Env(\Cc)$ is a groupoid equipped with a functor $\iota\colon \Cc\to\Env(\Cc)$ such that, if $f$ is any functor $\Cc\to \Gg$ with $\Gg$ groupoid, then $f$ factors through $\iota$;
	that is, there uniquely exists a functor
	$\tilde{f}: \Env(\Cc)\to \Gg$ such that $f=\tilde{f}\circ\iota$:
	$$\begin{tikzcd}
		\Cc\ar[r,"f"]\ar[d,"\iota"] &\Gg\\ 
		\Env(\Cc)\ar[ru,dotted]& 
	\end{tikzcd}$$
\end{defin}
Such an object $\Env(\Cc)$ exists for every category $\Cc$. It can be explicitly constructed as follows; see \cite[Definition II.3.3]{dehornoy2015foundations}.
\begin{prop}\label{prop:envgroupoid}
	Given a category $\Cc$, the enveloping groupoid $\Env(\Cc)$ of $\Cc$ is defined (up to isomorphism) by
	$$\Env(\Cc) = \langle \Cc\mid \Rel(\Cc)\rangle = \langle \Double(\Cc)\mid \Rel(\Cc)\cup F\rangle^+,$$
	where $F$ denotes the set of relations $f|\bar{f}\sim \mathbf{1}_{\source(f)}$, $\bar{f}|f\sim \mathbf{1}_{\target(f)}$, and $\Rel(\Cc)$ denotes the set of relations $x_1|\dots|x_r\sim x_1\dots x_r$ and $\mathbf{1}_\lambda\sim \varepsilon_\lambda$. There is an obvious map $\iota\colon\Cc\to \Env(\Cc)$, given by sending each $f\in\Cc$ to the equivalence class of the corresponding path $f$ of length one.
\end{prop}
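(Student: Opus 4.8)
The plan is to verify that the explicitly presented object $G := \langle \Double(\Cc)\mid \Rel(\Cc)\cup F\rangle^+$, equipped with the map $\iota$, satisfies the universal property characterising $\Env(\Cc)$. The first equality in the statement is purely definitional: it is the definition of a presented groupoid applied to the precategory $\Pp = \Cc$ and the relation set $R = \Rel(\Cc)$. Moreover $G$ is genuinely a groupoid, since for every generator $x\in\Cc$ the reverse $\bar{x}$ is a two-sided inverse of the class of $x$, thanks to the relations in $F$. I would first confirm that $\iota$ is a functor: on objects it is the identity $\Lambda\to\Lambda$; preservation of composition is exactly the relation $x|y\sim xy$ in $\Rel(\Cc)$, which says that the class of the length-one path $xy$ equals the class of the length-two path $x|y=\iota(x)\iota(y)$; and preservation of identities follows from the relations $\mathbf{1}_\lambda\sim\varepsilon_\lambda$, since $\varepsilon_\lambda$ is the identity of $\lambda$ in $G$.

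For the existence half of the universal property, fix a functor $f\colon\Cc\to\Gg$ into a groupoid $\Gg$. I would first build a functor $\hat{f}\colon\Path(\Double(\Cc))\to\Gg$ out of the path category (the free category on the quiver $\Double(\Cc)$) by prescribing its values on the generating quiver: let $\hat{f}$ agree with $f$ on objects, send $x\mapsto f(x)$ for $x\in\Cc$, and send $\bar{x}\mapsto f(x)^{-1}$ for $\bar{x}\in\bar{\Cc}$—the last assignment being legitimate precisely because $\Gg$ is a groupoid. By freeness of the path category this $\hat{f}$ is well defined and unique. The crux is then to check that $\hat{f}$ respects every defining relation, so that it descends to the quotient $G=\Path(\Double(\Cc))/\equiv^+_{\Rel(\Cc)\cup F}$: the relations $x_1|\dots|x_r\sim x_1\cdots x_r$ and $\mathbf{1}_\lambda\sim\varepsilon_\lambda$ are respected because $f$ is a functor, while $x|\bar{x}\sim\mathbf{1}_{\source(x)}$ and $\bar{x}|x\sim\mathbf{1}_{\target(x)}$ are respected because $f(x)f(x)^{-1}=\mathbf{1}_{f(\source(x))}$ and $f(x)^{-1}f(x)=\mathbf{1}_{f(\target(x))}$ in $\Gg$.

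Since $\equiv^+_{\Rel(\Cc)\cup F}$ is the least congruence containing these relations, $\hat{f}$ factors through a functor $\tilde{f}\colon G\to\Gg$. Commutativity $f=\tilde{f}\circ\iota$ is then immediate, as $\tilde{f}(\iota(x))=\hat{f}(x)=f(x)$ for all $x\in\Cc$. For uniqueness, suppose $g\colon G\to\Gg$ is any functor with $g\circ\iota=f$. Then $g(x)=f(x)$ for each generator $x\in\Cc$, and the relation $x|\bar{x}\sim\mathbf{1}$ forces $g(\bar{x})=g(x)^{-1}=f(x)^{-1}$; thus $g$ agrees with $\tilde{f}$ on all generators of $G$, hence on all of $G$.

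The only genuinely delicate point in the argument is the descent of $\hat{f}$ to the quotient. However, because presented categories are constructed exactly as quotients of path categories by the congruence generated by the relations (Remark \ref{rem:congruence}), it suffices to respect each individual defining relation; and this in turn reduces entirely to the functoriality of $f$ together with the groupoid axioms of $\Gg$. The remaining verifications are routine and formal.
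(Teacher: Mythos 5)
Your proof is correct. Note, however, that the paper itself offers no proof of this proposition: it is stated as a construction imported from the literature, with a pointer to \cite[Definition II.3.3]{dehornoy2015foundations}. So your verification of the universal property is exactly the argument that fills this gap, and it is sound: building $\hat{f}$ on $\Path(\Double(\Cc))$ by freeness, checking that each defining relation is respected (the relations in $\Rel(\Cc)$ by functoriality of $f$, those in $F$ by the groupoid axioms of the target), descending through the generated congruence via Remark \ref{rem:congruence}, and proving uniqueness by the fact that functors between groupoids preserve inverses, so that any $g$ with $g\circ\iota=f$ is determined on all generators of $\Double(\Cc)$. One small point deserves to be made explicit rather than dismissed as ``purely definitional'': the paper's definition of the presented groupoid $\langle\Cc\mid\Rel(\Cc)\rangle$ uses the relations $x|\bar{x}\sim\varepsilon_{\source(x)}$, $\bar{x}|x\sim\varepsilon_{\target(x)}$, whereas the set $F$ in the proposition is written with $\mathbf{1}_{\source(f)}$ and $\mathbf{1}_{\target(f)}$ in place of the empty paths. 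These two relation sets generate the same congruence only because $\Rel(\Cc)$ contains the relations $\mathbf{1}_\lambda\sim\varepsilon_\lambda$; this is a one-line observation, but it is the actual content of the first displayed equality, and your write-up should include it.
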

\begin{rem}
	The map $\iota\colon \Cc\to\Env(\Cc) $ need not be an embedding. Indeed, $\iota$ is certainly not an embedding if $\Cc$ is not cancellative (since an isomorphic copy of a subcategory of a groupoid is always cancellative). However, even the cancellativity is not a sufficient condition (see for instance \cite[Example II.3.9]{dehornoy2015foundations}).
\end{rem}
An important criterion to determine whether a monoid can be embedded into its enveloping group is due to Ore. We present, here, a generalisation of the original criterion, as reported in \cite[Proposition II.3.11]{dehornoy2015foundations}.
\begin{defin}\label{defin:Ore}
	A category is a \textit{left-Ore} (resp.\@ \textit{right-Ore})
	category if it is cancellative and any two elements with the same target (resp.\@ source) admit a common left-multiple (resp.\@ common right-multiple). A category is said to be \textit{Ore} if it is both left- and right-Ore.
\end{defin}
\begin{prop}\label{prop:Ore}
	Let $\Cc$ be a category. The following are equivalent:
	\begin{enumerate}
		\item[\textit{i.}] The category $\Cc$ is left-Ore.
		\item[\textit{ii.}] There exists a functor $\iota\colon \Cc\to\Env(\Cc)$, faithful and injective on the objects, and every element of $\Env(\Cc)$ is a left-fraction over $\iota(\Cc)$: i.e., it has the form $\iota(x)^{-1}\iota(y)$ for suitable $x,y\in\Cc$.
	\end{enumerate}
\end{prop}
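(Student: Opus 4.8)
The plan is to prove the two implications separately, the direction (\textit{i})$\Rightarrow$(\textit{ii}) being the substantial one. Throughout I would keep the left-to-right composition convention fixed above, so that a \emph{left fraction} $\iota(x)^{-1}\iota(y)$ requires $\source(x)=\source(y)$ and represents an arrow $\target(x)\to\target(y)$, while a common left-multiple $ux=vy$ can exist only for $x,y$ with $\target(x)=\target(y)$. Recognising which Ore condition powers which operation is the organising principle of the whole argument.

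For (\textit{ii})$\Rightarrow$(\textit{i}), I would first deduce cancellativity for free: if $fx=fy$ in $\Cc$, then $\iota(f)\iota(x)=\iota(f)\iota(y)$ in the groupoid $\Env(\Cc)$, and composing with $\iota(f)^{-1}$ gives $\iota(x)=\iota(y)$, whence $x=y$ since $\iota$ is injective; right-cancellativity is symmetric. For the existence of common left-multiples, I would take $x,y$ with $\target(x)=\target(y)$ and consider the element $\iota(x)\iota(y)^{-1}$ of $\Env(\Cc)$. By hypothesis it is a left fraction $\iota(u)^{-1}\iota(v)$ with $\source(u)=\source(v)$; composing the equality $\iota(x)\iota(y)^{-1}=\iota(u)^{-1}\iota(v)$ on the left with $\iota(u)$ and on the right with $\iota(y)$ yields $\iota(u)\iota(x)=\iota(v)\iota(y)$, and injectivity of $\iota$ turns this into the identity $ux=vy$ in $\Cc$, which is exactly a common left-multiple of $x$ and $y$. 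Hence $\Cc$ is left-Ore.

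For (\textit{i})$\Rightarrow$(\textit{ii}), I would carry out the classical Ore calculus of fractions in the present many-objects setting. I would build a groupoid $\Gg$ with the same objects as $\Cc$, whose arrows $\mu\to\nu$ are equivalence classes of pairs $(x,y)$ with $\source(x)=\source(y)$, $\target(x)=\mu$ and $\target(y)=\nu$ (thought of as $x^{-1}y$), where $(x,y)\sim(x',y')$ iff there exist $s,t\in\Cc$ with $sx=tx'$ and $sy=ty'$. Composition is defined by \emph{clearing denominators}: to compose $x^{-1}y$ with $z^{-1}w$ (which forces $\target(y)=\target(z)$) I use the left-Ore condition to pick a common left-multiple $sy=tz$ and set the product to be the class of $(sx,tw)$; the inverse of $(x,y)$ is $(y,x)$, and the identities are the classes of the $(\mathbf{1}_\lambda,\mathbf{1}_\lambda)$. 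The canonical functor $\iota\colon\Cc\to\Gg$, $y\mapsto(\mathbf{1}_{\source(y)},y)$, is injective by left-cancellativity. Finally I would check that $\Gg$ satisfies the universal property of $\Env(\Cc)$: any functor $f\colon\Cc\to\Gg'$ into a groupoid extends uniquely along $\iota$ via $(x,y)\mapsto f(x)^{-1}f(y)$. This identifies $\Gg\cong\Env(\Cc)$, transporting both the injectivity of $\iota$ and the description of every element as a fraction to $\Env(\Cc)$.

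The genuine labour lies entirely in the well-definedness of this fraction calculus, and this is where I expect the main obstacle to be: namely (a) transitivity of the equivalence relation on pairs, and (b) independence of the composition both from the chosen common left-multiple $sy=tz$ and from the chosen representatives of the two classes. Each of these is settled by invoking the existence of common left-multiples to pass to a common refinement and then cancelling; the categorical setting contributes only the extra bookkeeping of matching sources and targets at every step, with no new conceptual difficulty beyond the monoid case. Associativity is verified in the same manner. Once well-definedness is secured, the verification of the universal property and the identification $\Gg\cong\Env(\Cc)$ are purely formal.
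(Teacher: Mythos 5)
The paper itself contains no proof of this proposition: it is quoted from Dehornoy \emph{et al.}\@ and the proof is explicitly deferred to \cite[Appendix]{dehornoy2015foundations}, where the argument is precisely the classical Ore calculus of left fractions that you reconstruct, so your route coincides with that of the cited source. Your proposal is correct: the (\textit{ii})$\Rightarrow$(\textit{i}) direction is complete as written, and in (\textit{i})$\Rightarrow$(\textit{ii}) the fraction groupoid, the equivalence relation, the composition by clearing denominators, and the identification with $\Env(\Cc)$ via the universal property are the standard ingredients; the verifications you defer (transitivity, independence of choices, associativity) do go through by the method you name, with the remark that the ``cancelling'' step uses \emph{right}-cancellativity (e.g.\@ from $psy=qs'y$ one cancels $y$ on the right to get $ps=qs'$), which is available because the paper's notion of left-Ore builds in two-sided cancellativity. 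One point to make explicit: in (\textit{ii})$\Rightarrow$(\textit{i}) you treat $\iota$ as the canonical functor of Proposition \ref{prop:envgroupoid}, which is the identity on objects; this is the intended reading, and it is what licenses the passage from the equation $\iota(u)\iota(x)=\iota(v)\iota(y)$ in $\Env(\Cc)$ to the composability of $u,x$ and $v,y$ in $\Cc$, hence to the common left-multiple $ux=vy$. With that reading fixed, the argument is sound.
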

A proof of Proposition \ref{prop:Ore} can be found in \cite[Appendix]{dehornoy2015foundations}.
\subsection{Garside families}\label{subsection:Garside}
We summarise here the main definitions in Garside theory. For a thorough discussion, we redirect the reader to the most extensive monograph on the topic \cite{dehornoy2015foundations}, and to the original papers \cite{chouraqui2010garside,dehornoy2002groupes,dehornoy2013garside,garside1969braid}, in which parts of this theory were introduced.

We begin with some definitions. In order to avoid logical issues, we shall always assume all categories to be \textit{small}. 
For a subfamily $\Ss$ of a category $\Cc$, we define $\Ss^\sharp = \Ss\Cc^\times\cup \Cc^\times$, where $\Cc^\times$ is the 
set of invertible elements in $\Cc$, and $\Ss\Cc^\times$ denotes the 
set of compositions $sc$, where $s\in \Ss$ and $c\in\Cc^\times$ are composable. Morally, $\Ss^\sharp$ is the \textit{deformation} of $\Ss$ (on the right) by the invertible elements.
\begin{defin}
	Let $\Cc$ be a left-cancellative category, and $\Ss$ a subfamily of $\Cc$. A path $x|y$ of length two in $\Cc$ is $\Ss$\textit{-greedy} if, for all $s\in\Ss$ and for all $c\in\Cc$ with $\target(c)=\source(x)$, whenever $s$ left-divides $cxy$, one also has that $s$ left-divides $cx$.
	
	A path $x_1|\dots|x_r$ in $\Cc$ is $\Ss$-greedy, by definition, if all subpaths $x_i|x_{i+1}$ are $\Ss$-greedy.
	
	A path is $\Ss$-\textit{normal} if it is $\Ss$-greedy, and all the entries lie in $\Ss^\sharp$.
\end{defin}
\begin{defin}\label{defin:garside-family}
	A subfamily $\Ss$ in a left-cancellative category $\Cc$ is a \textit{Garside family} if every element of $\Cc$ admits an $\Ss$-normal decomposition---i.e., can be written as $x_1\dots x_r$ where $x_1|\dots|x_r$ is $\Ss$-normal.
\end{defin}
In the case of the above definition, if a normal decomposition exists then it is ``essentially'' unique, meaning that it is unique up to a deformation by invertible elements \cite[Proposition III.1.25]{dehornoy2015foundations}.
\begin{prop}[{\protect\cite[Corollary IV.2.41]{dehornoy2015foundations} }]\label{prop:closure-is-Garside-family}
	Let $\Cc$ be a left-cancellative category, and $\Ss$ a subfamily of $\Cc$ that generates $\Cc$. Suppose $\Cc$ is right-Noetherian and admits unique conditional right-lcms. Then, the closure of $\Ss$ under right-lcms and $\backslash_R$ is the smallest Garside family which is $=^\times$-closed and includes 
	$\Ss\cup\mathbf{1}_{\Cc}$.
	Here, $\mathbf{1}_{\Cc}$ is the family consisting of all identity elements of the category $\Cc$.
\end{prop}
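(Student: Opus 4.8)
The plan is to deduce the statement from the recognition machinery for Garside families, by establishing two halves: that the indicated closure is a Garside family, and that it is contained in every $=^\times$-closed Garside family including $\Ss\cup\mathbf{1}_\Cc$. Write $\widehat{\Ss}$ for the closure of $\Ss\cup\mathbf{1}_\Cc$ under right-lcm, under the right-complement $\backslash_R$, and under $=^\times$-equivalence; since conditional right-lcms are only defined up to $=^\times$ (Lemma \ref{lem:lcms-unique-up-to...}), building in the $=^\times$-closure is harmless. By construction $\widehat{\Ss}$ is $=^\times$-closed, contains $\Ss\cup\mathbf{1}_\Cc$, and generates $\Cc$ (as $\Ss$ already does); moreover $\widehat{\Ss}^\sharp=\widehat{\Ss}$, because $\widehat{\Ss}$ is $=^\times$-closed and contains the identities.

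First I would prove that $\widehat{\Ss}$ is Garside by building normal forms through \emph{heads}. Given $g\in\Cc$, consider the elements of $\widehat{\Ss}$ that left-divide $g$. Any two such share the common right-multiple $g$, so by the conditional right-lcm hypothesis they admit a right-lcm, which again left-divides $g$ and, by closure of $\widehat{\Ss}$ under right-lcm, again lies in $\widehat{\Ss}$. Right-Noetherianity forces the resulting $\preccurlyeq_L$-increasing chain of such lcms to stabilise, producing a $\preccurlyeq_L$-greatest left-divisor $H(g)\in\widehat{\Ss}$, the $\widehat{\Ss}$-head of $g$. Writing $g=H(g)\,g'$ with $g'=H(g)\backslash_R g$ (well defined by left-cancellativity) and iterating, right-Noetherianity again guarantees termination after finitely many steps, yielding $g=x_1\cdots x_r$ with $x_i\in\widehat{\Ss}$. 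Maximality of heads translates, via the recognition lemmas of \cite{dehornoy2015foundations}, into each pair $x_i|x_{i+1}$ being $\widehat{\Ss}$-greedy, so this is an $\widehat{\Ss}$-normal decomposition in the sense of Definition \ref{defin:garside-family}; hence $\widehat{\Ss}$ is a Garside family. The closure of $\widehat{\Ss}$ under $\backslash_R$ is precisely what keeps the greedy entries inside $\widehat{\Ss}^\sharp$ and makes the head recursion self-contained.

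Next I would prove minimality. Let $\Tt$ be any $=^\times$-closed Garside family with $\Ss\cup\mathbf{1}_\Cc\subseteq\Tt$; it suffices to show $\Tt$ is closed under right-lcm and $\backslash_R$, for then $\Tt\supseteq\widehat{\Ss}$. For right-lcm: if $s,t\in\Tt$ admit the right-lcm $\ell$, take a $\Tt$-normal decomposition $\ell=x_1\cdots x_r$; its head $x_1$ is the greatest $\Tt$-left-divisor of $\ell$, so $s\preccurlyeq_L x_1$ and $t\preccurlyeq_L x_1$, whence $x_1$ is a common right-multiple of $s,t$ and $\ell\preccurlyeq_L x_1$. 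Since also $x_1\preccurlyeq_L\ell$, we get $\ell=^\times x_1\in\Tt^\sharp=\Tt$. For $\backslash_R$: combining this with the general fact that a $=^\times$-closed Garside family is closed under right-complement---i.e.\ $s\preccurlyeq_L\ell$ with $s,\ell\in\Tt$ forces $s\backslash_R\ell\in\Tt$---shows that $s\backslash_R t$, being the cofactor of $s$ in the lcm $\ell\in\Tt$, again lies in $\Tt$. Thus $\Tt$ is closed under both operations and under $=^\times$, so $\Tt\supseteq\widehat{\Ss}$, as required.

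The hard part will be the two closure facts invoked as ``general'': that heads exist and recursively produce greedy (hence normal) decompositions, and that $=^\times$-closed Garside families are closed under right-complement. Both are genuinely the content of the recognition theory of \cite[Chapter IV]{dehornoy2015foundations}, and the bookkeeping up to $=^\times$-equivalence---which is where uniqueness of conditional right-lcms, left-cancellativity, and right-Noetherianity are all used simultaneously---is delicate. Accordingly I would either transcribe the relevant lemmas from \cite{dehornoy2015foundations} into the present setting, or, most economically, verify that the hypotheses here match those of \cite[Corollary IV.2.41]{dehornoy2015foundations} and invoke it directly.
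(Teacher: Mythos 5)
The paper never proves this proposition: it is a preliminary recalled verbatim from \cite[Corollary IV.2.41]{dehornoy2015foundations}, and the citation \emph{is} the paper's proof, so your closing ``most economical'' option (verify the hypotheses and invoke the corollary directly) is exactly what the paper does. Your sketch itself is a faithful outline of how the monograph proves the result --- existence of $\widehat{\Ss}$-heads via right-Noetherianity and lcm-closure, iteration of heads to get normal decompositions, then minimality by showing any competing $=^\times$-closed Garside family containing $\Ss\cup\mathbf{1}_\Cc$ is closed under right-lcm (head of a normal decomposition of the lcm absorbs both divisors) and under $\backslash_R$ --- and the two facts you defer are indeed the genuine technical content of Chapter IV of \cite{dehornoy2015foundations}. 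Two refinements. First, you missed a simplification that makes the ``delicate $=^\times$-bookkeeping'' evaporate: uniqueness of conditional right-lcms already forces $\Cc$ to have \emph{no} nontrivial invertible elements (any invertible $e$ with source $\lambda$ is a right-lcm of $\mathbf{1}_\lambda$ and $\mathbf{1}_\lambda$, so $e=\mathbf{1}_\lambda$), hence $=^\times$ is equality and $\Tt^\sharp=\Tt$ for any family containing the identities; in particular your justification that adding the $=^\times$-closure is ``harmless because lcms are only defined up to $=^\times$'' is backwards here, since lcms are assumed literally unique. Second, the role of closure under $\backslash_R$ in your first half is not, as you say, ``to keep the greedy entries inside $\widehat{\Ss}^\sharp$'' (the entries are heads, which lie in $\widehat{\Ss}$ by lcm-closure already); it is what upgrades head-maximality, which by itself gives the greedy condition only for empty left context $c$, to greediness for arbitrary $c$ (the second domino rule). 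With those lemmas transcribed from \cite{dehornoy2015foundations}, or with the direct citation, your argument is sound.
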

\section{Weak RC-systems and other cyclic systems}\label{section:cyclic}
\noindent First, we recall the definitions of weak RC-systems and their variants.
\begin{defin}[{\cite[Definition XIV.2.3]{dehornoy2015foundations}}]\label{def:weakRCsys}
	A \emph{weak RC-system} $(Q,\star)$ is the datum of a quiver $Q$ over $\Lambda$ and a partially defined binary operation $\star$ on $Q$, such that:
	\begin{enumerate}
		\item[\textit{i.}] The operation $x\star y$ is defined only if $\source(x)=\source(y)$.\footnote{ We may pose a stronger condition, and require that $x\star y$ is defined \textit{if and only if} $\source(x) = \source(y)$, because, in our context, there seems to be no reason to weaken the request. However, we respect Dehornoy \textit{et al.}'s definition, because it leads to more generality in the results.}
		\item[\textit{ii.}] Whenever $x\star y$ is defined, $y\star x$ is also defined, and $\source(x\star y) = \target(x)$, $\source(y\star x) = \target(y)$, $\target(x\star y) = \target(y\star x)$, as depicted in the diagram:
		\begin{center}
			\begin{tikzpicture}
				\draw[-Stealth] (-0.5,0.5) to node[above] {$x$} (0.5,0.5);
				\draw[-Stealth] (-0.5,0.5) to node[left] {$y$} (-0.5,-0.5);
				\draw[-Stealth] (0.5,0.5) to node[right] {$x\star y$} (0.5,-0.5);
				\draw[-Stealth] (-0.5,-0.5) to node[below] {$y\star x$} (0.5,-0.5);
			\end{tikzpicture}
		\end{center}
		\item[\textit{iii.}] Whenever $x\star y$, $x\star z$ and $(x\star y)\star (x\star z)$ are defined, $y\star z$
		and $(y\star x)\star (y\star z)$ are also defined, and
		$$(x\star y)\star (x\star z) = (y\star x)\star (y\star z)$$
		holds (this is called the \textit{RC-law}, where ``RC'' stands for \textit{right-cyclic}, and it is fundamentally a ``cube rule''; see Figure \ref{fig:RC-law}).
		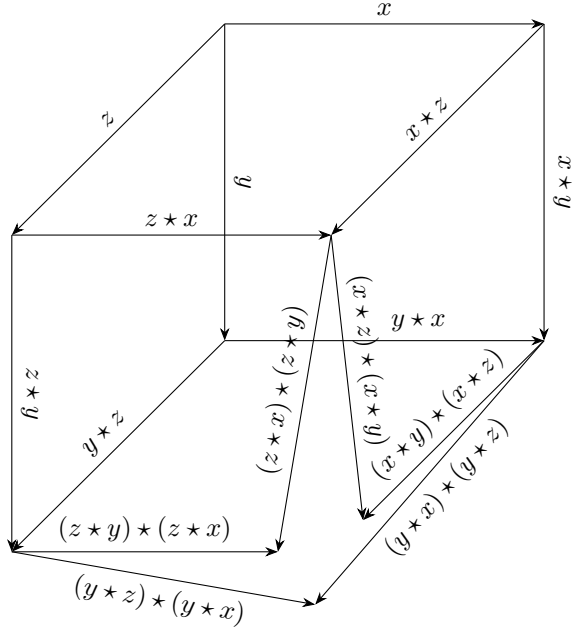
\begin{figure}\centering
			\begin{tikzpicture}[x=0.7cm, y=0.7cm]
				\draw[-Stealth] (0,6) to node[sloped,above] {\textcolor{black}{$x$}} (6,6);
				\draw[-Stealth] (0,6) to node[sloped,above] {$z$} (-4,2);
				\draw[-Stealth] (0,6) to node[sloped,above] {$y$} (0,0);
				\draw[-Stealth] (6,6) to node[sloped,above] {\textcolor{black}{$x\star y$}} (6,0);
				\draw[-Stealth] (6,6) to node[sloped,above] {$x\star z$} (2,2);
				\draw[-Stealth] (-4,2) to node[sloped,above] {$z\star x$} (2,2);
				\draw[-Stealth] (-4,2) to node[sloped,above] {$z\star y$} (-4,-4);
				\draw[-Stealth] (0,0) to node[sloped,above] {$y\star z$} (-4,-4);
				\draw[-Stealth] (0,0) to node[sloped,above] {$\qquad\;\; y\star x$} (6,0);
				\draw[-Stealth] (-4,-4) to node[sloped,above] {$(z\star y)\star(z\star x)$} (1,-4);
				\draw[-Stealth] (2,2) to node[sloped,above] {$(z\star x)\star(z\star y)$} (1,-4);
				\draw[-Stealth] (2,2) to node[sloped,above] {$(x\star z)\star(x\star y)\;\;\;\;$} (2.6,-3.4);
				\draw[-Stealth] (-4,-4) to node[sloped,below] {$(y\star z)\star(y\star x)$} (1.7,-5);
				\draw[-Stealth] (6,0) to node[sloped,below] {$(y\star x)\star(y\star z)$} (1.7,-5);
				\draw[-Stealth] (6,0) to node[sloped,above] {\textcolor{black}{$(x\star y)\star (x\star z)$}} (2.6,-3.4);
			\end{tikzpicture}
			\caption[RC-law]{The RC-law for $\star$ may be interpreted as a cube relation.}\label{fig:RC-law}
		\end{figure}
	\end{enumerate}
	We shall say that $(Q,\star) $ is \textit{left-non-degenerate}, when the operation $x\star y$ is defined if and only if $\source(x) = \source(y)$, and all the maps $x\star\blank\colon Q(\source(x),\Lambda)\to Q(\target(x),\Lambda)$, for $x\in Q$, are bijections.
\end{defin}
\begin{defin}
	A weak \emph{co-RC-system} $(Q,\bullet)$ is the datum of a quiver $Q$ and a partially defined binary operation $\bullet$ on $Q$, such that:
	\begin{enumerate}
		\item[\textit{i.}] The operation $x\bullet y$ is defined only if $\target(x)=\target(y)$.
		\item[\textit{ii.}] Whenever $x\bullet y$ is defined, $y\bullet x $ is also defined, and $\target(x\bullet y) = \source(x)$, $\target(y\bullet x) = \source(y)$, $\source(x\bullet  y)=\source(y\bullet  x)$, as depicted in the diagram:
		\begin{center}
			\begin{tikzpicture}
				\draw[-Stealth] (-0.5,0.5) to node[above] {$x\bullet  y$} (0.5,0.5);
				\draw[-Stealth] (-0.5,0.5) to node[left] {$y\bullet  x$} (-0.5,-0.5);
				\draw[-Stealth] (0.5,0.5) to node[right] {$x$} (0.5,-0.5);
				\draw[-Stealth] (-0.5,-0.5) to node[below] {$y$} (0.5,-0.5);
			\end{tikzpicture}
		\end{center}
		\item[\textit{iii.}] Whenever $x\bullet  y$, $x\bullet  z$ and $(x\bullet  y)\bullet  (x\bullet  z)$ are defined,
		$y\bullet z$ and $(y\bullet  x)\bullet  (y\bullet  z)$ are also defined, and $(x\bullet  y)\bullet  (x\bullet  z) = (y\bullet  x)\bullet (y\bullet z)$ holds (this is called the \textit{co-RC-law}).
	\end{enumerate}
	We shall say that $(Q,\bullet) $ is \textit{left-non-degenerate}, when $x\bullet y$ is defined if and only if $\target(x)=\target(y)$, and all the maps $x\bullet\blank\colon Q(\Lambda,\target(x))\to Q(\Lambda,\source(x))$, for $x\in Q$, are bijections.
\end{defin}
\begin{defin}
	A \textit{weak LC-system} $(Q,\tilde{\star})$ is the datum of a quiver $Q$ and a partially defined binary operation $\tilde{\star}$, such that $(Q,\bullet)$ is a co-RC-system with the binary operation $x\bullet y= y\,\tilde{\star}\, x$.
\end{defin}
\begin{defin}\label{definition:RLCsystem}
	A \textit{weak RLC-system} $(Q,\star,\tilde{\star})$ is the datum of a quiver $Q$ and two partially defined operations $\star,\tilde{\star}$ on $Q$, such that
	\begin{enumerate}
		\item[\textit{i.}] $(Q,\star)$ is a weak RC-system;
		\item[\textit{ii.}] $(Q,\tilde{\star})$ is a weak LC-system;
		\item[\textit{iii.}] the two operations satisfy the following compatibility condition:
		if $x\star y$ is defined, then 
		$(y\star x)\;\tilde{\star}\;(x\star y)$
		is also defined and
		$x = (y\star x)\;\tilde{\star}\; (x\star y)$;
		moreover, if $x\;\tilde{\star}\;y$ is defined, then 
		$(y\;\tilde{\star}\;x)\star(x\;\tilde{\star}\;y)$
		is also defined and 
		$x= (y\;\tilde{\star}\; x)\star (x\;\tilde{\star}\; y).$
	\end{enumerate}
\end{defin}
The above compatibility between $\star$ and $\tilde{\star}$ first appeared in the work of Rump \cite{RumpDecompositionTheorem}.
\subsection{Unit families and unital weak RC-systems}
In this section,
we recall the definition of unit families for weak RC-systems.
\begin{defin}[{cf.\@ \cite[Definition XIV.2.7]{dehornoy2015foundations}}]
	Let $(Q,\star) $ be a weak RC-system, and $\mathcal{E} = \set{\epsilon_\lambda}_{\lambda\in\Obj(Q)}$ a subfamily of $Q$. We say that $\mathcal{E}$ is a \emph{unit family} for $Q$ if
	\begin{enumerate}
		\item[\textit{i.}] $\epsilon_\lambda\in Q(\lambda,\lambda)$ for all $\lambda\in\Obj(Q)$;
		\item[\textit{ii.}] $x\star \epsilon_{\source(x)} $ is defined for all $x$, and $x\star \epsilon_{\source(x)}= \epsilon_{\target(x)}$;
		\item[\textit{iii.}] $\epsilon_{\source(x)}\star x$ is defined for all $x$, and $\epsilon_{\source(x)}\star x = x$;
		\item[\textit{iv.}] $x \star x $ is defined for all $x$, and $x \star x = \epsilon_{\target(x)}$.
	\end{enumerate}
\end{defin}
Therefore, $\mathcal{E}$ is a family of loops, one on each $\lambda\in\Obj(Q)$, such that applying $\star$ gives the following squares:
$$\begin{tikzcd}\lambda\ar[r,"x"]\ar[d,"\epsilon_\lambda"]& \mu\ar[d,"\epsilon_\mu"]\\
	\lambda\ar[r,"x"]& \mu \end{tikzcd}\quad \begin{tikzcd}\lambda\ar[r,"x"]\ar[d,"x"]& \mu\ar[d,"\epsilon_\mu"]\\
	\mu\ar[r,"\epsilon_\mu"]& \mu \end{tikzcd}$$
\begin{defin}[{cf.\@ \cite[Definition XIV.2.7]{dehornoy2015foundations}}]
	A weak RC-system $(Q,\star)$ is \emph{unital} if it has a unit family $\mathcal{E}$ and, moreover, the following property holds: for all $x,y\in Q(\Lambda,\mu)$, if $x\star y = y\star x = \epsilon_\mu$ then $x = y$.
\end{defin}
If $Q$ is unital with respect to unit families $\set{\epsilon_\lambda}_\lambda$ and $\set{\epsilon_\lambda'}_\lambda$, then $\epsilon_\lambda= \epsilon_\lambda\star\epsilon_\lambda = \epsilon_\lambda'$ for all $
\lambda$. Therefore, the unit family of a unital weak RC-system is unique \cite[\S XIV.2.1]{dehornoy2015foundations}.
\subsection{Completions}\label{subsection:completions}
If a weak RC-system $(Q, \star)$ does not have a unit family (or possibly even if it already has one), we may extend $Q$ with artificial ``units'' as follows; see \cite[Lemma XIV.2.12]{dehornoy2015foundations}.

Let $Q\subset Q'$, where $Q'$ is another precategory, whence we are going to pick up our additional units. For all objects $\lambda\in \Obj{Q}$ we choose $\epsilon_\lambda\in Q'(\lambda,\lambda)$, and we define $Q^\sharp(\lambda,\lambda) = Q(\lambda,\lambda)\cup \set{\epsilon_\lambda}$. Notice that $\epsilon_\lambda$ need not be in $Q'(\lambda,\lambda)\smallsetminus Q(\lambda,\lambda)$ and, in fact, it might \textit{either} be an additional element, \textit{or} be selected among the arrows already in $Q$.

We define $Q^\sharp(\lambda, \mu)=Q(\lambda, \mu)$ if $\lambda\ne\mu$, and 
then modify the operation $\star$ to an operation $\star^\sharp$ defined on $Q^\sharp$. We set
\begin{align*}
	&\epsilon_{\source(y)}\star^\sharp y =y,\\
	&	x\star^\sharp \epsilon_{\source(x)} = \epsilon_{\target(x)},\\
	&	x\star^\sharp x = \epsilon_{\target(x)},\\
	&	x\star^\sharp y =x\star y\; \text{in all the remaining cases.}
\end{align*}
An operation $(\blank)^\sharp$ as above will be called a \textit{unit insertion} on a weak RC-system. The loops $\epsilon_\lambda$ that we add to $Q$ will be called the \textit{inserted loops}.

There is a privileged way of choosing such an extension; namely, the case in which we pick $\epsilon_\lambda\notin Q(\lambda,\lambda)$ for all $\lambda$. This is called the \emph{completion} by Dehornoy \textit{et al.} \cite{dehornoy2015foundations}, and denoted $(\hat{Q},\hat{\star})$.
\begin{rem}
	At a first glance, this \textit{unit insertion} operation might seem to be an \textit{extension} of $(Q,\star)$. But, in fact, it is not. Indeed, when we set $x\star^\sharp x = \epsilon_{\source(x)}$, if $x\star x$ was already defined in $Q$ we are now forcing a different definition; see \cite[Example XIV.2.14]{dehornoy2015foundations}.
\end{rem}
\begin{lem}\label{lem:extension-is-weakRCs}
	Let $(Q,\star)$ be a left-non-degenerate\footnote{A similar lemma is reported in the monograph \cite[Lemma XIV.2.12]{dehornoy2015foundations}. However, that lemma lacks the left-non-degeneracy hypothesis---and, because of this missing hypothesis, the proof is flawed. To the reader's present day, this may have been amended in a revised version of the monograph.}
	weak RC-system. Then, $(\hat{Q},\hat{\star})$ is a unital weak RC-system. The unit family is given by the inserted loops $\set{\epsilon_\lambda}_{\lambda\in\Obj(Q)}$.
\end{lem}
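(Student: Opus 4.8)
The plan is to verify directly that $(\hat{Q},\hat{\star})$ satisfies the three axioms of a weak RC-system, together with the four unit-family axioms and the unital cancellation property, by a case analysis according to whether each argument is an inserted loop, coincides with another argument, or is a ``genuine'' arrow of $Q$. The single observation that organises the whole argument is that left-non-degeneracy forces $\star$ to be \emph{total} on source-matching pairs: since each $x\star\blank$ is a bijection $Q(\source(x),\Lambda)\to Q(\target(x),\Lambda)$, it is in particular defined on the whole of $Q(\source(x),\Lambda)$, so $x\star y$ exists whenever $\source(x)=\source(y)$. The same then holds for $\hat\star$ on $\hat{Q}$, which disposes at once of nearly all the ``definedness'' clauses in the axioms; what remains is to check the stated equalities.

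The unit-family axioms are immediate, since they merely restate the defining clauses of $\hat\star$ (one checks in passing that the three relevant clauses agree on the degenerate input $(\epsilon_\lambda,\epsilon_\lambda)$, all giving $\epsilon_\lambda$). Axiom \textit{i} of a weak RC-system holds because $\hat\star$ was only defined on source-matching pairs, and axiom \textit{ii} is a short verification in the four cases $x=\epsilon_\lambda$, $y=\epsilon_\lambda$, $x=y\in Q$, and $x\ne y$ in $Q$, the last of which is inherited from axiom \textit{ii} of $(Q,\star)$. The unital cancellation property is where the choice $\epsilon_\lambda\notin Q(\lambda,\lambda)$ pays off: if $x,y\in Q$ with $x\ne y$ then $x\hat\star y=x\star y\in Q$, which can never equal the inserted loop $\epsilon_\mu$; hence $x\hat\star y=\epsilon_\mu$ already forces $x=y$, while the cases in which $x$ or $y$ is an inserted loop force $x=y=\epsilon_\mu$ directly via the clause $\epsilon_{\source(y)}\hat\star y=y$.

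The substance of the proof is the RC-law (axiom \textit{iii}). Here I would split the triple $(x,y,z)$, all of common source $\lambda$, into three regimes. First, if one of $x,y,z$ equals $\epsilon_\lambda$, both sides of the RC-law collapse, via the unit clauses and the target-matching of axiom \textit{ii}, to a common value (either $x\hat\star z$, or an inserted loop $\epsilon_{\target(x\hat\star y)}$, depending on which argument is the loop); the identity then holds on the nose. Second, if two of the three coincide in $Q$ (say $x=y$, or $x=z$, or $y=z$), the override clause $w\hat\star w=\epsilon_{\target(w)}$ turns one factor into an inserted loop, and both sides again reduce, through the unit clauses and axiom \textit{ii}, to the same expression.

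The third and genuinely substantive regime is $x,y,z$ pairwise distinct and all lying in $Q$, where $x\hat\star y=x\star y$ and $x\hat\star z=x\star z$ are both in $Q$. The crux, and the point at which left-non-degeneracy is indispensable, is to guarantee that $(x\hat\star y)\hat\star(x\hat\star z)$ is computed by the generic clause, giving $(x\star y)\star(x\star z)$, rather than collapsing to an inserted loop through the override clause: this requires $x\star y\ne x\star z$, which follows from the injectivity of $x\star\blank$ together with $y\ne z$. Granting this, both sides of the RC-law for $\hat\star$ reduce to the corresponding $\star$-expressions, and the claimed equality is exactly the RC-law of $(Q,\star)$, with the symmetric inequality $y\star x\ne y\star z$ (again from injectivity) handling the right-hand side. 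This is precisely the step that breaks down without non-degeneracy, and it is the gap in the version of the statement given in \cite[Lemma XIV.2.12]{dehornoy2015foundations}; all remaining verifications are routine bookkeeping with the defining clauses of $\hat\star$ and axiom \textit{ii}.
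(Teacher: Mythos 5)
Your proof is correct and follows essentially the same route as the paper's: the same case split (an inserted loop among $x,y,z$; two arguments coinciding; all three pairwise distinct in $Q$), with the same pivotal use of left-non-degeneracy to get $x\star y\neq x\star z$ and $y\star x\neq y\star z$, so that the override clause $w\,\hat{\star}\,w=\epsilon_{\target(w)}$ is never triggered and the RC-law of $(Q,\star)$ can be invoked. The only divergence is bookkeeping: the paper does not extract totality of $\star$ from the left-non-degeneracy hypothesis (it tracks definedness throughout, transferring it to the right-hand side through the definedness clause built into the RC-law of $(Q,\star)$), but since your pairwise-distinct regime invokes that same RC-law, your argument also goes through under that weaker, partial-operation reading of the hypothesis.
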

\begin{proof}
	We only have to prove the RC-law: the other properties follow easily from the construction of the completion. 
	
	For the sake of simplicity, we write $\epsilon$ instead of the $\epsilon_\lambda$'s: the source of $\epsilon$ will be intended to be the only one that makes sense.
	
	Let $x,y,z$ be in $\hat{Q}$, such that $(x\;\hat{\star}\;y)\;\hat{\star}\; (x\;\hat{\star}\; z)$ is defined. Then $(y\;\hat{\star}\; x)\;\hat{\star}\; (y\;\hat{\star}\;z)$ is also defined. For $z=\epsilon$, we have\begin{align*}
		(x\;\hat{\star}\; y)\;\hat{\star}\; (x\;\hat{\star}\; \epsilon) = (x\;\hat{\star}\; y)\;\hat{\star}\;\epsilon = \epsilon_{\target(x\hat{\star}y)},
	\end{align*}
	meanwhile
	\begin{align*}
		(y\;\hat{\star}\;x)\;\hat{\star}\; (y\;\hat{\star}\;z) = (y\;\hat{\star}\; x)\;\hat{\star}\; \epsilon = \epsilon_{\target(y\,\hat{\star}\,x)}.
	\end{align*}
	This proves the RC-law in this case, because $\target(x\;\hat{\star}\;y)=\target(y\;\hat{\star}\;x)$. For $y =\epsilon$, we have 
	\begin{align*}
		(x\;\hat{\star}\; \epsilon)\;\hat{\star}\;(x\;\hat{\star}\;z) = \epsilon\;\hat{\star}\;(x\;\hat{\star}\;z) = x\;\hat{\star}\; z,
	\end{align*}
	and, on the other hand,
	\begin{align*}
		(\epsilon\;\hat{\star}\; x)\;\hat{\star}\;(\epsilon\;\hat{\star}\;z) = x\;\hat{\star}\; z,
	\end{align*}
	as desired.	Similarly, for the case $x=\epsilon$, 	$$(\epsilon\;\hat{\star}\; y)\;\hat{\star}\; (\epsilon\;\hat{\star}\;z) = y\;\hat{\star}\; z,$$
	and
	$$(y\;\hat{\star}\;\epsilon)\;\hat{\star}\; (y\;\hat{\star}\;z) = \epsilon\;\hat{\star}\; (y\;\hat{\star}\;z) = y\;\hat{\star}\; z,$$
	as desired. 
	
	In the case $x = y$, the RC-law becomes trivial. In the case $x=z$, 
	$$(x\;\hat{\star}\;y)\;\hat{\star}\; (x\;\hat{\star}\; x) = (x\;\hat{\star}\; y)\;\hat{\star}\;\epsilon = \epsilon_{\target(x\,\hat{\star}\,y)}$$
	on one side; while, on the other side, 
	$$(y\;\hat{\star}\; x)\;\hat{\star}\; (y\;\hat{\star}\; x) = \epsilon_{\target(y\,\hat{\star}\,x)}.$$
	They are same, because $\target(x\;\hat{\star}\;y)=\target(y\;\hat{\star}\;x)$.	
	In the case $y=z$, the RC-law again boils down to 
	the equation $\epsilon_{\target(x\,\hat{\star}\,y)} = \epsilon_{\target(y\,\hat{\star}\,x)}$.
	
	Thus, we are now left with the case $x,y,z\neq \epsilon$ distinct. Two subcases can occur: either $x\;\hat{\star}\; y = x\;\hat{\star}\; z$, or $x\;\hat{\star}\; y\neq x\;\hat{\star}\; z$.
	
	Suppose $x\;\hat{\star}\; y = x\;\hat{\star}\; z$. Since $x,y,z\neq\epsilon$ are distinct, $x\star y = x\star z$ holds: but $(Q,\star)$ is left-non-degenerate, thus $y = z$, which is a contradiction.	Therefore, $x\;\hat{\star}\;y\neq x\;\hat{\star}\; z$. Using the left-non-degeneracy again, we also observe that $y\;\hat{\star}\;x\neq y\;\hat{\star}\;z$ holds. Then, 
	\begin{align*}(x\;\hat{\star}\; y)\;\hat{\star}\;(x\;\hat{\star}\; z) =&\; (x\star y)\star (x\star z) \\=&\; (y\star x)\star (y\star z)\\
		=&\;(y\;\hat{\star}\; x)\;\hat{\star}\; (y\;\hat{\star}\;z),\end{align*}
	
	because $(Q, \star)$ is a weak RC-system. This proves the lemma.
\end{proof}
\subsection{The structure category of a unital weak RC-system}
\noindent For a given weak RC-system $(Q,\star)$, we may consider the \textit{path category} $\Path(Q)$. This is naturally endowed with an operation derived from $\star$, that we are going to denote by $\star$ again, for the sake of simplicity. If $\alpha = a_1^1|a_2^1|\dots|a_r^1$ and $\beta = b_1^1|b_1^2|\dots| b_1^s$ are two non-empty paths sharing the same source, the paths $\alpha\star\beta$ and $\beta\star \alpha$ are given by ``completing the grid'' as shown in the diagram:
$$\begin{tikzcd}
	{}\ar[r,"a_1^1"]\ar[d,"b_1^1"]& {}\ar[r,"a_2^1"]\ar[d,"b_2^1"] & \dots &{}\ar[r,"a_r^1"] &{}\ar[d,"b_{r+1}^1"]\\
	{}\ar[r,"a_1^2"]\ar[d,"b_1^2"]& {}\ar[r,"a_2^2"]\ar[d,"b_2^2"] & \dots & {}\ar[r,"a_r^2"]&{}\ar[d,"b_{r+1}^2"]\\
	{\vdots}& {\vdots}&{} &  {} & {\vdots}\\
	{}\ar[d,"b_1^s"]& {}\ar[d,"b_2^s"] & {} & {}&{}\ar[d,"b_{r+1}^s"]\\
	{}\ar[r,"\;\;a_1^{s+1}"]& {}\ar[r,"\;\;a_2^{s+1}"] & \dots &{}\ar[r,"a_{r}^{s+1}"] &{}\\
\end{tikzcd}$$
Each square represents the application of $\star$ in $Q$; that is, $a_i^j=b_i^{j-1}\star a_i^{j-1}$ $(i=1, 2, \ldots, r, j=2, 3, \ldots, s+1)$ and $b_j^i=a_{j-1}^i\star b_{j-1}^i$ $(i=1, 2, \ldots, s, j=2, 3, \ldots, r+1)$. Thus $\alpha\star \beta$ is defined if and only if all the squares in the grid are defined. By definition, $$\alpha\star \beta = b_{r+1}^1|b_{r+1}^2|\dots| b_{r+1}^s,\qquad \beta\star\alpha = a_1^{s+1}|a_2^{s+1}|\dots|a_r^{s+1}.$$
If $\varepsilon_\lambda$ is the empty path on $\lambda$, we define
\begin{align}
	\label{eq:star-on-empty-path-1}& \varepsilon_\lambda\star\varepsilon_\lambda=\varepsilon_\lambda,\\
	\label{eq:star-on-empty-path-2}&\varepsilon_{\source(\alpha)}\star \alpha= \alpha,\\
	\label{eq:star-on-empty-path-3}&\alpha\star \varepsilon_{\source(\alpha)}=\varepsilon_{\target(\alpha)}.
\end{align}
We note that the symbol $\varepsilon_\lambda$ means an empty path, while
$\epsilon_\lambda$ was used in \S\ref{subsection:completions} to signify a unit.
\begin{rem}\label{rem:star-on-empty-paths}
	If we consider non-empty paths $\alpha$ and $\beta$ as above, and we insert some occurrences of empty paths in the midst of them, the definition of $\alpha\star \beta$ does not change, because of the above \eqref{eq:star-on-empty-path-1}, \eqref{eq:star-on-empty-path-2},
	and \eqref{eq:star-on-empty-path-3}. In other words, our definition of $\star$ is consistent with the possibility of inserting empty subpaths in $\alpha,\beta$. On the other hand, the definition of $\star$ \textit{must} be consistent with this insertion of empty paths, in order for $\star$ to be well defined; and one can easily get convinced that conditions \eqref{eq:star-on-empty-path-1}, \eqref{eq:star-on-empty-path-2}, \eqref{eq:star-on-empty-path-3} are \textit{exactly} the conditions that we have to impose in order to get this consistency.\end{rem}
\begin{lem}
	If $(Q,\star)$ is a weak RC-system, then $(\Path(Q), \star)$ is a weak RC-system.
\end{lem}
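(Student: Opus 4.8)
The plan is to check, in turn, the three axioms \textit{i}--\textit{iii} defining a weak RC-system for the pair $(\Path(Q),\star)$. The first two are bookkeeping on the grid that defines $\star$; the RC-law (axiom \textit{iii}) is the substantial point, and I would establish it by induction on the total length $|\alpha|+|\beta|+|\gamma|$, the base case being the RC-law already assumed in $Q$.

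The first step is to record the two \emph{grid-splitting identities} that drive the induction. Cutting the defining grid along a concatenation $\alpha=\alpha_1|\alpha_2$ of the top row into a left block (over $\alpha_1$) and a right block (over $\alpha_2$, whose left edge is the right column $\alpha_1\star\beta$ of the left block), one reads off directly
\[
(\alpha_1|\alpha_2)\star\beta=\alpha_2\star(\alpha_1\star\beta),\qquad \beta\star(\alpha_1|\alpha_2)=(\beta\star\alpha_1)\,\big|\,\big((\alpha_1\star\beta)\star\alpha_2\big),
\]
together with the empty-path conventions \eqref{eq:star-on-empty-path-1}--\eqref{eq:star-on-empty-path-3}. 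These are precisely the recursion clauses \textit{ii} and \textit{iii} of Lemma \ref{lem:extension-to-vartheta*}; they hold here irrespective of the value of the diagonal $x\star x$, and are all that the induction requires. I also note that, since the right column $\alpha\star\beta$ and the bottom row $\beta\star\alpha$ are produced by filling one and the same grid, $\alpha\star\beta$ is defined if and only if $\beta\star\alpha$ is.

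Axioms \textit{i} and \textit{ii} then follow quickly: axiom \textit{i} because the grid cannot be started unless $\source(\alpha)=\source(\beta)$, and axiom \textit{ii} by reading the four corners of the completed grid (the source of the right column is $\target(\alpha)$, the source of the bottom row is $\target(\beta)$, and both terminate at the bottom-right corner), each equality reducing square-by-square to axiom \textit{ii} in $Q$. For axiom \textit{iii}, write $\star_3(\alpha,\beta,\gamma):=(\alpha\star\beta)\star(\alpha\star\gamma)$, so that the RC-law is the symmetry $\star_3(\alpha,\beta,\gamma)=\star_3(\beta,\alpha,\gamma)$. In the inductive step at least one path has length $\ge 2$. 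If $|\gamma|\ge 2$, I peel $\gamma=\gamma_1|\gamma_2$ and use the second grid-splitting identity twice to expand each of $\star_3(\alpha,\beta,\gamma)$ and $\star_3(\beta,\alpha,\gamma)$ into a concatenation of two blocks; after one application of the inductive RC-law to the triple $(\alpha,\gamma_1,\beta)$ (resp.\ $(\beta,\gamma_1,\alpha)$), the two blocks become $\star_3(\alpha,\beta,\gamma_1)$ and $\star_3(\gamma_1\star\alpha,\gamma_1\star\beta,\gamma_2)$ (resp.\ with $\alpha,\beta$ swapped), and the inductive hypothesis equates the corresponding blocks. If instead $|\gamma|\le 1$ and, say, $|\alpha|\ge 2$, I peel the first arrow $a$ of $\alpha=a|\alpha'$: the first grid-splitting identity turns $\star_3(\alpha,\beta,\gamma)$ into $\star_3(\alpha',a\star\beta,a\star\gamma)$, while a short computation (using both identities and the inductive RC-law on $(\beta,a,\gamma)$) turns $\star_3(\beta,\alpha,\gamma)$ into $\star_3(a\star\beta,\alpha',a\star\gamma)$; these two differ only by the swap of their first two (shorter) arguments, so the inductive hypothesis closes the case. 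The remaining case, all lengths $\le 1$, is the RC-law of $Q$ when $\alpha,\beta,\gamma$ are arrows, and is disposed of by \eqref{eq:star-on-empty-path-1}--\eqref{eq:star-on-empty-path-3} when any of them is empty.

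I expect the main obstacle to be not the equality of the defined values but the propagation of \emph{definedness} through the induction, since $\star$ is only partial. Every instance of the RC-law and every rewriting step carries hypotheses that certain products be defined, and one must verify that these descend to the shorter triples fed to the inductive hypothesis and reassemble correctly. Here the grid picture is decisive: each grid-splitting identity merely re-partitions a single grid into sub-grids, so definedness of the whole is equivalent to simultaneous definedness of the pieces; combined with the symmetric definedness clause noted above, this lets me transport the hypotheses back and forth at each step. Geometrically the whole argument is the statement that a three-dimensional box can be consistently filled out of unit cubes, each a copy of the RC-law in $Q$, and the induction is just an explicit bookkeeping of that multidimensional consistency.
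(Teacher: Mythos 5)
Your proposal is correct and takes essentially the same approach as the paper: the paper checks axioms \textit{i}--\textit{ii} as routine and justifies the RC-law by ``stacking the cubes'' of the RC-law figure adjacent to one another in a three-dimensional grid, which is exactly the multidimensional consistency your induction makes explicit. Your length induction via the two grid-splitting identities, together with the definedness bookkeeping you flag (definedness of a grid being equivalent to definedness of its sub-grids, and $\alpha\star\beta$ being defined iff $\beta\star\alpha$ is), is a rigorous elaboration of the paper's one-line geometric argument rather than a different route.
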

\begin{proof}
	Observe that if $\alpha\star \beta$ is defined, for two paths $\alpha,\beta$, then all the squares in the grid are defined, and hence $\beta\star \alpha$ is also defined, as one of the sides of this grid, and it is immediate to see condition \textit{ii} from Definition \ref{def:weakRCsys}. 
	
	If $\alpha\star \beta$, $\alpha\star \gamma$, and $(\alpha\star\beta)\star (\alpha\star\gamma)$ are defined for paths $\alpha,\beta,\gamma$, then we can draw a three-dimensional grid in which all cubes are well-defined because of the RC-law in $(Q,\star)$: thus $(\beta\star\alpha)\star (\beta\star \gamma)$ is defined, as one of the edges of this three-dimensional grid, and $(\alpha\star\beta)\star (\alpha\star\gamma)= (\beta\star\alpha)\star (\beta\star \gamma)$.
\end{proof}

Let $(Q,\star)$ be a unital weak RC-system, with a unit family $\mathcal{E}$. Notice that $\Path(\mathcal{E})$ cannot be a unit family on $\Path(Q)$, because the definition of a unit family requires that for each vertex $\lambda$ we have a \textit{unique} loop $\epsilon_\lambda$---while $\Path(\mathcal{E})$ has infinitely many loops on each vertex.

However, $\Path(\mathcal{E})$ has many similarities with unit families. This leads us to the following definition: given two paths $\alpha,\beta\in\Path(Q)$, we say that $\alpha\equiv\beta $ holds if and only if $\alpha$ and $\beta $ fit in a grid as above, where the two other sides of the grid lie in $\Path(\mathcal{E})$; that is, $\alpha\equiv \beta$ if and only if $\alpha\star\beta, \beta\star\alpha\in\Path(\mathcal{E})$.
\begin{lem}
	The relation $\equiv$ is an equivalence relation and a congruence on $\Path(Q)$.
\end{lem}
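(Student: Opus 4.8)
The plan is to verify the three axioms of an equivalence relation and then compatibility with concatenation, throughout exploiting two elementary ``absorption'' facts about the path-operation $\star$ which follow by decomposing the defining grid into sub-grids. Before anything else I would record these facts. First, $\delta\star\delta\in\Path(\mathcal{E})$ for every path $\delta$. Second, whenever $\pi\in\Path(\mathcal{E})$ and $\delta$ shares the source of $\pi$, one has $\pi\star\delta=\delta$ and $\delta\star\pi\in\Path(\mathcal{E})$. Both reduce, square by square, to the unital axioms $x\star x=\epsilon_{\target(x)}$, $\epsilon_{\source(x)}\star x=x$, $x\star\epsilon_{\source(x)}=\epsilon_{\target(x)}$; formally they follow by induction on path length using the grid-decomposition identities $\vartheta^*(p|q,r)=\vartheta^*(q,\vartheta^*(p,r))$ and $\vartheta^*(p,q|r)=\vartheta^*(p,q)|\vartheta^*(\vartheta^*(q,p),r)$ of Lemma \ref{lem:extension-to-vartheta*}, which the path-operation $\star$ satisfies by construction (stacking sub-grids, exactly as in the proof that $(\Path(Q),\star)$ is a weak RC-system). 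I would also note here that $\alpha\equiv\beta$ forces $\source(\alpha)=\source(\beta)$ and $\target(\alpha)=\target(\beta)$, read off the corners of the grid whose remaining two sides are loop paths; this makes $\equiv$ a relation between parallel arrows, as a congruence must be.

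For the equivalence-relation part, symmetry is immediate since the defining condition ``$\alpha\star\beta,\beta\star\alpha\in\Path(\mathcal{E})$'' is visibly symmetric, and reflexivity is the first absorption fact. Transitivity is the conceptual heart, and I would deduce it from the RC-law on $\Path(Q)$. Given $\alpha\equiv\beta$ and $\beta\equiv\gamma$ (all sharing a source), the cube rule $(\alpha\star\beta)\star(\alpha\star\gamma)=(\beta\star\alpha)\star(\beta\star\gamma)$ has left-hand side collapsing to $\alpha\star\gamma$ because $\alpha\star\beta\in\Path(\mathcal{E})$, and right-hand side collapsing to $\beta\star\gamma\in\Path(\mathcal{E})$ because $\beta\star\alpha\in\Path(\mathcal{E})$; hence $\alpha\star\gamma\in\Path(\mathcal{E})$. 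The symmetric instance $(\gamma\star\beta)\star(\gamma\star\alpha)=(\beta\star\gamma)\star(\beta\star\alpha)$ gives $\gamma\star\alpha\in\Path(\mathcal{E})$, so $\alpha\equiv\gamma$.

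For the congruence part I would reduce to two one-sided statements and then chain them by transitivity: that $\alpha_1\equiv\alpha_2$ implies $\alpha_1|\beta\equiv\alpha_2|\beta$, and that $\beta_1\equiv\beta_2$ implies $\gamma|\beta_1\equiv\gamma|\beta_2$ (the source/target matchings being legitimate precisely because $\equiv$ preserves sources and targets). Each one-sided statement is a direct computation with the identities \textit{ii} and \textit{iii}, in which every factor is driven into $\Path(\mathcal{E})$ by the absorption facts: for instance $(\alpha_1|\beta)\star(\alpha_2|\beta)=\beta\star(\alpha_1\star(\alpha_2|\beta))$, and expanding $\alpha_1\star(\alpha_2|\beta)=(\alpha_1\star\alpha_2)|((\alpha_2\star\alpha_1)\star\beta)=(\alpha_1\star\alpha_2)|\beta$ (using $\alpha_2\star\alpha_1\in\Path(\mathcal{E})$) reduces the whole product to a concatenation $(\beta\star\rho)|(\beta\star\beta)$ with $\rho:=\alpha_1\star\alpha_2\in\Path(\mathcal{E})$, both factors lying in $\Path(\mathcal{E})$. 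The symmetric product lands in $\Path(\mathcal{E})$ by the same token, giving $\alpha_1|\beta\equiv\alpha_2|\beta$; the left-concatenation case is entirely analogous. Combining, $\alpha_1|\beta_1\equiv\alpha_2|\beta_1\equiv\alpha_2|\beta_2$.

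I expect the main obstacle to be the bookkeeping in the congruence step: keeping track of which grid fragments produced by \textit{ii} and \textit{iii} actually belong to $\Path(\mathcal{E})$, and checking that every partial operation $\star$ invoked is in fact defined (the paths involved always share a source, so this is ultimately harmless, but it should be verified). A secondary point of care is the base of the induction for the absorption facts, where the partiality of $\star$ and the difference between $x\star x$ computed in $Q$ versus in the completion must not be conflated; here it is the unitality built into the hypotheses that guarantees the clean identities.
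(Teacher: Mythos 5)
Your proof is correct and takes essentially the same route as the paper: transitivity via the cube rule (RC-law) on $\Path(Q)$ together with the two absorption facts, which the paper likewise states and proves by induction on grids, while the compatibility with concatenation---which the paper simply declares clear---is precisely your grid computation with identities \textit{ii} and \textit{iii}. The one point to repair is your claim that definedness is ``ultimately harmless'' because the paths share a source: in a weak RC-system $x\star y$ may be undefined even when $\source(x)=\source(y)$, so the cube rule must be invoked from the side whose ingredients are known to be defined, namely $(\beta\star\alpha)\star(\beta\star\gamma)$ (defined because $\alpha\equiv\beta$, $\beta\equiv\gamma$, and unit paths compose under $\star$ by the unit-family axioms); condition \textit{iii} of the definition of a weak RC-system then asserts, as part of its conclusion, that $\alpha\star\gamma$ and $(\alpha\star\beta)\star(\alpha\star\gamma)$ are defined and equal to it---definedness is an output of the RC-law (and, in the congruence step, of the unit-family axioms), not a consequence of source-sharing.
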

\begin{proof}
	It is clear that $\equiv$ is reflexive and symmetric, as well as it is clear that it respects the composition of paths. We only need to prove the transitivity.
	
	Suppose $\alpha\equiv \beta $ and $\beta\equiv\gamma$ hold. Because of the cube rule, we have the following diagram:
	\begin{center}
		\begin{tikzpicture}[x=1.2cm, y=1.2cm]
			\draw[-Stealth] (0,2) to node[sloped, above]{$\alpha$} (2,2);
			\draw[-Stealth] (0,2) to node[left]{$\beta$} (0,0);
			\draw[-Stealth] (0,2) to node[left]{$\gamma$} (-1,1);
			\draw[-Stealth] (2,2) to node[sloped, above]{\small$\in\Path(\mathcal{E})$} (2,0);
			\draw[-Stealth] (0,0) to node[sloped,below]{\small$\in\Path(\mathcal{E})$} (2,0);
			\draw[-Stealth] (-1,1) to node[sloped, below]{\small$\in\Path(\mathcal{E})$} (-1,-1);
			\draw[-Stealth] (0,0) to node[sloped,below]{\small$\in\Path(\mathcal{E})$} (-1,-1);
			\draw[-Stealth,dotted] (2,2) to node[above]{$\xi_1$} (1,1);
			\draw[-Stealth,dotted] (-1,1) to node[above]{$\quad\;\;\xi_2$} (1,1);
			\draw[-Stealth,dotted] (1,1) to node[above]{$\quad\xi_3$} (1,-1);
			\draw[-Stealth,dotted] (2,0) to node[right]{$\;\xi_4$} (1,-1);
			\draw[-Stealth,dotted] (-1,-1) to node[below]{$\xi_5$} (1,-1);
		\end{tikzpicture}
	\end{center}
	Observe that, if $\zeta_1$ and $\zeta_2$ lie in $\Path(\mathcal{E})$, then $\zeta_1\star\zeta_2$ and $\zeta_2\star \zeta_1$ also lie in $\Path(\mathcal{E})$. Therefore, in the diagram, we obtain $\xi_4,\xi_5\in\Path(\mathcal{E})$.
	
	Since $\epsilon\star x=x$ 
	and $x\star\epsilon=\epsilon$ hold  in $Q$, it is easy to see, by induction on the grids, that $\xi\star \zeta = \zeta$ holds for all $\zeta\in\Path(Q)$ and for all $\xi\in\Path(\mathcal{E})$ with $\source(\xi) = \source(\zeta)$. Therefore, in the above diagram, $\xi_1$ and $\xi_2$ also lie in $\Path(\mathcal{E})$. This means $\alpha\equiv \gamma$, as desired.
\end{proof}
We can finally give the main definition\footnote{Our definition amends \cite[Definition XIV.2.25]{dehornoy2015foundations}.} of this section:
\begin{defin}\label{def:structure-category-of-wRCs}
	If $(Q,\star)$ is a unital weak RC-system with the unit family $\mathcal{E}$, we define the \textit{structure category} $\Cc(Q)$ of $(Q,\star)$ as the quotient
	$$\Cc(Q)=\Path(Q)/\equiv.$$
	The composition of the category is induced by the junction of paths: this is well defined, because $\equiv$ is a congruence, and thus the quotient modulo $\equiv$ respects the junction of paths.
\end{defin}
The ``grid calculus'' is a visualisation of the consistency relations from Lemma \ref{lem:extension-to-vartheta*}. Indeed, \textit{a posteriori} we want $\star$ to behave like a complementation, thus we needed to extend $\star$ on $\Path(Q)$ in a way that was consistent with concatenations and grids. 
\subsection{When the structure category is Garside} A presented category $\Cc = (\Pp,R)$ satisfies a \textit{quadratic isoperimetric inequality} with respect to this presentation, if for all $n$ (larger than a certain $n_0$) and for all equivalent paths $\alpha,\beta\in\Path(\Pp)$ with $|\alpha|+ |\beta|\le n$ (where $|\cdot|$ is the length function), the number of relations in $R$ to apply in order to get from $\alpha$ to $\beta$ is $\mathcal{O}(n^2)$; see \cite[Definition IV.3.9]{dehornoy2015foundations}.

The main result about the structure category of a unital weak RC-system is the following proposition (which is \cite[Proposition XIV.2.27]{dehornoy2015foundations}, with some modifications and addenda, proven by using our alternative definition of $\Cc(Q)$ rather than \cite[Definition XIV.2.25]{dehornoy2015foundations}).
\begin{prop}\label{prop:Garside-family}
	Let $(Q,\star)$ be a unital weak RC-system, with the unit family $\mathcal{E}$. Let $\Cc(Q)$ denote the associated structure category. Let $\iota\colon Q\to \Cc(Q)$ be the map sending $x\in Q$ first to the corresponding path $x\in\Path(Q)$ of length one and, then, mapping this path to its class $\iota(x)$ modulo $\equiv$. Then, the following properties hold.
	\begin{enumerate}
		\item[\textit{i.}] The map $\iota$ embeds $Q\smallsetminus\mathcal{E}$ into $\Cc(Q)$.
		\item[\textit{ii.}] The category $\Cc(Q)$ admits the presentation \begin{align*}\Big\langle Q\;\Big|\;\big\lbrace x|(x\star y)\sim y|(y\star x)\text{ for all }x\neq y\in Q\smallsetminus\mathcal{E}\text{ such that }x\star y\text{ is defined}\big\rbrace  \\  \cup \;\big\lbrace\epsilon_\lambda\sim \varepsilon_\lambda\text{ for all }\lambda\in\Obj(Q)\big\rbrace \Big\rangle^+,\end{align*}
		and $\Cc(Q)$ satisfies a quadratic isoperimetric inequality with respect to this presentation.
	\end{enumerate}
	If moreover $Q$ satisfies the condition\footnote{The condition reported in \cite[Proposition XIV.2.27]{dehornoy2015foundations} was the following: \[\text{«if }x\notin \mathcal{E} \text{ and } x\star y \in\mathcal{E}, \text{ then } y = x\text{».} \]However, we require the weaker condition \eqref{eqn:add_cond} for two reasons. First, because the weaker condition is sufficient. Second, because the stronger condition of \cite{dehornoy2015foundations} is hardly ever satisfied: if $\mathcal{E}$ is a unit family, then \( x\star \epsilon = \epsilon\) holds for all \( x\), hence the instance \( y = \epsilon\) makes the condition false in every case except the trivial one $Q = \mathcal{E}$.}
	\begin{equation}\label{eqn:add_cond}\text{«if }x,y\notin \mathcal{E} \text{ and } x\star y\in\mathcal{E}, \text{ then } y = x\text{»,}\end{equation}
	then $\Cc(Q)$ also admits the presentation
	\begin{align}\label{presentation}\Cc(Q) = \Big\langle Q\smallsetminus\mathcal{E}\;\Big|\; x|(x\star y) \sim y|(y\star x)\text{ for all }x\neq y \in Q\smallsetminus \mathcal{E}\Big.\\
		\nonumber\Big.\text{ such that }x\star y\text{ is defined}\Big\rangle^+,\end{align}and the following additional properties hold:
	\begin{enumerate}
		\item[\textit{iii.}] The category $\Cc(Q)$ has no nontrivial invertible elements.
		\item[\textit{iv.}] The category $\Cc(Q)$ is left-cancellative, it admits unique conditional right-lcms, and the complementation operation 
		$($on the right$)$ of the elements represented by $u$ and $v$ $($$u, v\in\Path(Q\smallsetminus\mathcal{E})$$)$ is given by 
		the element represented by $(u\star v)^\bullet$. Here, for $u\in\Path(Q)$, $u^\bullet$ is the element of $\Path(Q\smallsetminus\mathcal{E})$  defined by replacing with identities every element of $\mathcal{E}$ occurring in the entries of $u$.
		\item[\textit{v.}] The category $\Cc(Q)$ is Noetherian, and the atoms are given by the elements of $Q\smallsetminus\mathcal{E}$.
		\item[\textit{vi.}] The closure $E$ of the subfamily $Q\smallsetminus\mathcal{E}$ under right-lcms is a Garside family for $\Cc(Q)$, and it is the smallest Garside family containing $(Q\smallsetminus\mathcal{E})\cup \mathbf{1}_{\Cc(Q)}$.\end{enumerate}
\end{prop}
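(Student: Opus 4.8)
The plan is to establish the presentation \eqref{presentation} first and then read off every Garside-theoretic conclusion from the machinery of \S\ref{subsection:complement}--\S\ref{subsection:Garside}. For part \textit{i}, I would argue directly with $\equiv$: if $x,y\in Q\smallsetminus\mathcal{E}$ satisfy $\iota(x)=\iota(y)$, then the length-one paths obey $x\star y,\,y\star x\in\Path(\mathcal{E})$, so at the common target $\mu$ one has $x\star y=y\star x=\epsilon_\mu$, and unitality forces $x=y$. The same computation shows $\iota(x)$ is never an identity for $x\notin\mathcal{E}$, since $\varepsilon_{\source(x)}\star x=x\notin\Path(\mathcal{E})$ by \eqref{eq:star-on-empty-path-2}. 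Thus $\iota$ embeds $Q\smallsetminus\mathcal{E}$.

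For part \textit{ii}, the task is to identify the congruence $\equiv$ defining $\Cc(Q)$ with the congruence $\equiv^+_R$ generated by the listed relations $R$. The inclusion $\equiv^+_R\subseteq\equiv$ is a direct grid computation: using the unit axioms together with the RC-law one checks that $\bigl(x|(x\star y)\bigr)\star\bigl(y|(y\star x)\bigr)$ and its transpose land in $\Path(\mathcal{E})$, and that $\epsilon_\lambda\star\varepsilon_\lambda,\ \varepsilon_\lambda\star\epsilon_\lambda\in\Path(\mathcal{E})$. The reverse inclusion is the heart of the matter. Given $\alpha\equiv\beta$, I would complete the $\star$-grid with $\alpha$ along the top and $\beta$ along the left; each interior cell is an instance of a square relation (or, at a diagonal collision $a=a$, a trivial relation that merely inserts a unit), so reading the grid cell by cell yields $\alpha|(\alpha\star\beta)\equiv^+_R\beta|(\beta\star\alpha)$. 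Since $\alpha\equiv\beta$ means the right and bottom borders $\alpha\star\beta$ and $\beta\star\alpha$ lie in $\Path(\mathcal{E})$, the relations $\epsilon_\lambda\sim\varepsilon_\lambda$ delete them, giving $\alpha\equiv^+_R\beta$. The grid has at most $|\alpha|\cdot|\beta|\le\bigl((|\alpha|+|\beta|)/2\bigr)^2$ cells, which is exactly the quadratic isoperimetric bound.

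Assuming now \eqref{eqn:add_cond}, I would pass to \eqref{presentation} by a Tietze-type argument: each $\epsilon_\lambda$ is already identified with the identity $\mathbf{1}_\lambda$, so it may be dropped as a generator, and the condition guarantees that for distinct non-units $x\neq y$ one has $x\star y,\,y\star x\notin\mathcal{E}$, so every surviving square relation has all four entries in $Q\smallsetminus\mathcal{E}$. Part \textit{iii} is then immediate from Lemma \ref{lem:epsilon-rel}, since \eqref{presentation} is homogeneous and contains no $\varepsilon$-relation. For part \textit{iv} I would recognise \eqref{presentation} as short right-complemented with syntactic right-complement $\vartheta(x,y)=x\star y$ (single-valued, and non-unit for $x\neq y$ by the condition); the decisive observation is that the sharp $\vartheta$-cube condition $\vartheta^*_3(x,y,z)=\vartheta^*_3(y,x,z)$ is literally the RC-law $(x\star y)\star(x\star z)=(y\star x)\star(y\star z)$, matching of definedness included. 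Proposition \ref{prop:complemented-implies-lcms} then yields left-cancellativity, conditional right-lcms, and $u\backslash_R v=\vartheta^*(u,v)=(u\star v)^\bullet$; uniqueness of the lcms follows from Lemma \ref{lem:lcms-unique-up-to...} together with part \textit{iii}, as the absence of nontrivial invertibles makes $=^\times$-equivalence into equality.

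Noetherianity (part \textit{v}) is Proposition \ref{prop:homogeneous-implies-noetherian} applied to the homogeneous presentation \eqref{presentation}; the induced length function identifies the atoms with the length-one elements, which by part \textit{i} are exactly the images of $Q\smallsetminus\mathcal{E}$. Finally part \textit{vi} is an application of Proposition \ref{prop:closure-is-Garside-family}: $\Cc(Q)$ is left-cancellative (\textit{iv}), right-Noetherian (\textit{v}, since $\prec_R$ refines $\subset$), admits unique conditional right-lcms (\textit{iv}), and is generated by $Q\smallsetminus\mathcal{E}$; as there are no nontrivial invertibles the $=^\times$-closure is vacuous, so the closure $E$ of $Q\smallsetminus\mathcal{E}$ under right-lcms is the smallest Garside family containing $(Q\smallsetminus\mathcal{E})\cup\mathbf{1}_{\Cc(Q)}$. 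I expect the main obstacle to be the reverse inclusion $\equiv\subseteq\equiv^+_R$ of part \textit{ii}: one must track how inserted units arise in the interior of the grid---at diagonal collisions and wherever a $\star$-product falls into $\mathcal{E}$---and verify that they are always removable through $\epsilon_\lambda\sim\varepsilon_\lambda$. Condition \eqref{eqn:add_cond} is precisely what guarantees, in the second half, that such units never obstruct the elimination of $\mathcal{E}$ from the generating set, so that \eqref{presentation} is genuinely a presentation by non-units.
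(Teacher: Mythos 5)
Parts \textit{i}--\textit{v} of your proposal are correct and follow essentially the same route as the paper's proof: the two-way grid argument identifying $\equiv$ with the congruence generated by the square relations and the relations $\epsilon_\lambda\sim\varepsilon_\lambda$ (with the same bookkeeping of units arising at diagonal collisions and at $\star$-products falling into $\mathcal{E}$), Lemma \ref{lem:epsilon-rel} for \textit{iii}, the short right-complemented presentation together with Proposition \ref{prop:complemented-implies-lcms} and Lemma \ref{lem:lcms-unique-up-to...} for \textit{iv}, and homogeneity plus Proposition \ref{prop:homogeneous-implies-noetherian} for \textit{v}. One small inaccuracy in \textit{iv}: the sharp $\vartheta$-cube condition is not \emph{literally} the RC-law, because the syntactic complement is $\vartheta(x,y)=(x\star y)^\bullet$ rather than $x\star y$; one needs the intermediate identity $(u^\bullet\star v^\bullet)^\bullet=(u\star v)^\bullet$ to transport the RC-law for $\star$ on $\Path(Q)$ into the cube condition for $\vartheta^*$. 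This is cosmetic rather than a gap.

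Part \textit{vi}, however, has a genuine gap. Proposition \ref{prop:closure-is-Garside-family} concerns the closure of $Q\smallsetminus\mathcal{E}$ under right-lcms \emph{and} under the complementation $\backslash_R$; call it $E'$. Statement \textit{vi} is about the closure $E$ under right-lcms \emph{alone}, and your argument silently identifies the two. The subtlety you do address---that the $=^\times$-closure is vacuous because $\Cc(Q)$ has no nontrivial invertible elements---is a different issue and does not give $E=E'$. Nor can you get it from minimality: a priori only $E\subseteq E'$ holds, and $E'$ is the smallest Garside family containing $(Q\smallsetminus\mathcal{E})\cup\mathbf{1}_{\Cc(Q)}$, so concluding $E'\subseteq E$ would require already knowing that $E$ is a Garside family, which is exactly what is in question. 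The missing step, which the paper supplies, is to check that $E$ is closed under $\backslash_R$: complementation distributes over right-lcms, $g\backslash_R\mathrm{lcm}(f_1,\dots,f_l)=\mathrm{lcm}(g\backslash_R f_1,\dots,g\backslash_R f_l)$, and satisfies $(g_1g_2)\backslash_R h=g_2\backslash_R(g_1\backslash_R h)$; since every element of $E$ is a right-lcm of a finite family of atoms, and atoms complement into atoms or identities (as $x\backslash_R y=(x\star y)^\bullet$), these identities show that $E$ is $\backslash_R$-closed, whence $E=E'$ and Proposition \ref{prop:closure-is-Garside-family} applies. With this supplement your proof of \textit{vi} is complete.
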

\begin{proof} Recall that, in the weak RC-system $\Path(Q)$, two paths $\alpha = a_1^1|\dots |a_r^1$ and $\beta = b_1^1|\dots|b_1^s$ are equivalent modulo $\equiv$ if and only if they fit in a grid like the following, where the other two sides lie in $\Path(\mathcal{E})$:
	$$\begin{tikzcd}
		{}\ar[r,"a_1^1"]\ar[d,"b_1^1"]& {}\ar[r,"a_2^1"]\ar[d] & \dots &{}\ar[r,"a_r^1"] &{}\ar[d,"\in \mathcal{E}"]\\
		{}\ar[r]\ar[d,"b_1^2"]& {}\ar[r]\ar[d]& \dots & {}\ar[r]&{}\ar[d,"\in\mathcal{E}"]\\
		{\vdots}& {\vdots}&{}\ar[r,"a_i^j"]\ar[d,"b_i^j"] &  {}\ar[d,"b_{i+1}^j"] & {\vdots}\\
		{}\ar[d,"b_1^s"]& {}\ar[d] & {}\ar[r,"a_i^{j+1}"] & {}&{}\ar[d,"\in\mathcal{E}"]\\
		{}\ar[r,"\in\mathcal{E}"]& {}\ar[r,"\in\mathcal{E}"] & \dots &{}\ar[r,"\in\mathcal{E}"] &{}\\
	\end{tikzcd}$$
	In each inner square with arrows labelled by $a_i^j, b_{i+1}^j, b_i^j, a_i^{j+1}$, four cases can occur:
	\begin{enumerate}
		\item[1)] $a_i^j, b_i^j\in Q\smallsetminus\mathcal{E}$,
		\item[2)] $a_i^j\in\mathcal{E}, b_i^j\in Q\smallsetminus\mathcal{E}$, 
		\item[3)] $a_i^j\in Q\smallsetminus\mathcal{E}, b_i^j\in\mathcal{E}$, or
		\item[4)] $a_i^j, b_i^j\in\mathcal{E}$.
	\end{enumerate}
	If we set $x=a_i^j$ and $\epsilon_{\source(x)}=b_i^j$ in the case 3),
	then $a_i^{j+1}=\epsilon_{\source(x)}\star x=x$
	and $b_{i+1}^j=x\star\epsilon_{\source(x)}=\epsilon_{t(x)}$. In the same manner, we can see that each inner square with arrows labelled by $a_i^j$, $b_{i+1}^j$, $b_i^j$, $a_i^{j+1}$ corresponds to one of the following types:
	\begin{itemize}
		\item[A)] $a_i^j=x$, $b_{i+1}^j=x\star y$, $b_i^j=y$, $a_i^{j+1}=y\star x$ $(x, y\in Q\smallsetminus\mathcal{E}, x\ne y, \mathrm{\ and\ } x\star y \mathrm{\ is\ defined})$,
		\item[B)] $a_i^j=x$, $b_{i+1}^j=\epsilon_{\target(x)}$, $ b_i^j=x$, $a_i^{j+1}=\epsilon_{\target(x)}$ $(x\in Q\smallsetminus\mathcal{E})$,
		\item[C1)] $a_i^j=x$, $b_{i+1}^j=\epsilon_{\target(x)}$, $b_i^j=\epsilon_{\source(x)}$, $a_i^{j+1}=x$ $(x\in Q\smallsetminus\mathcal{E})$,
		\item[C2)] $a_i^j=\epsilon_{\source(x)}$, $b_{i+1}^j=b_i^j=x$, $a_i^{j+1}=\epsilon_{\target(x)}$ $(x\in Q\smallsetminus\mathcal{E})$,
		\item[D)] $a_i^j=b_{i+1}^j=b_i^j=a_i^{j+1}=\epsilon_\lambda$.
	\end{itemize}
	
	For $\alpha = a_1^1|\dots |a_r^1$ and $\beta = b_1^1|\dots|b_1^s$ with $\target(a_r^1)=\target(b_1^s)$, 
	we set $\lambda=\target(a_r^1)(=\target(b_1^s))$.
	It is clear that $\alpha\equiv\alpha|\epsilon_\lambda|\cdots|\epsilon_\lambda$,
	in which $\epsilon_\lambda(\in\mathcal{E})$ appears $s$ times;
	and that $\beta\equiv\beta|\epsilon_\lambda|\cdots|\epsilon_\lambda$,
	in which $\epsilon_\lambda$ appears $r$ times.
	We note that these two paths $\alpha\equiv\alpha|\epsilon_\lambda|\cdots|\epsilon_\lambda$
	and $\beta\equiv\beta|\epsilon_\lambda|\cdots|\epsilon_\lambda$
	have the same length $r+s$.
	
	Because $\alpha|\epsilon_\lambda|\cdots|\epsilon_\lambda$ 
	(resp.\ $\beta|\epsilon_\lambda|\cdots|\epsilon_\lambda$)
	appears on the uppermost and the rightmost arrows 
	(resp.\ the leftmost and the bottom arrows) in the grid, we can change the path
	$\alpha|\epsilon_\lambda|\cdots|\epsilon_\lambda$ to the path
	$\beta|\epsilon_\lambda|\cdots|\epsilon_\lambda$
	by replacing $a_i^j|b_{i+1}^j$ with $b_i^j|a_i^{j+1}$,
	along every square with arrows labelled $a_i^j, b_{i+1}^j, b_i^j, a_i^{j+1}$. Consequently, $\alpha\equiv\beta$ means that 
	we can bring
	$\alpha|\epsilon_\lambda|\cdots|\epsilon_\lambda$ into
	$\beta|\epsilon_\lambda|\cdots|\epsilon_\lambda$
	by replacing:
	\begin{itemize}
		\item[A)]$x|(x\star y)$ with $y|(y\star x)$
		$(x, y\in Q\smallsetminus\mathcal{E}, x\ne y, \mathrm{\ and\ } x\star y \mathrm{\ is\ defined})$,
		\item[B)]$x|\epsilon$ with $x|\epsilon$ $(x\in Q\smallsetminus\mathcal{E})$,
		\item[C1)] $x|\epsilon$ with $\epsilon |x$ $(x\in Q\smallsetminus\mathcal{E})$,
		\item[C2)] $\epsilon|x$ with $x|\epsilon$ $(x\in Q\smallsetminus\mathcal{E})$,
		\item[D)] $\epsilon|\epsilon$ with $\epsilon|\epsilon$.
	\end{itemize}
	The following relations of types A and E can thereby generate the congruence $\equiv$:
	\begin{itemize}
		\item[A)]$x|(x\star y)\sim y|(y\star x)$
		$(x, y\in Q\smallsetminus\mathcal{E}, x\ne y, \mathrm{\ and\ } x\star y \mathrm{\ is\ defined})$,
		and
		\item[E)]$\epsilon_\lambda\sim \varepsilon_\lambda$ for all $\lambda\in\Obj(Q)$,
	\end{itemize}
	where $\varepsilon_\lambda$ denotes the empty path on $\lambda$. 
	Notice that the relations of type E are all $\varepsilon$-relations.
	
	We have thus proven a part of \textit{ii}: we denote by
	$\equiv'$ the smallest congruence relation
	on $\Path(Q)$ that contains all the relations of the forms A and E.
	Then $\alpha\equiv\beta$ holds $(\alpha, \beta\in\Path(Q))$, if $\alpha\equiv'\beta$.
	
	For completing the proof of \textit{ii},
	it suffices to show that
	\[
	a_1|\cdots|a_r|\epsilon_\lambda|b_1|\cdots|b_s\equiv a_1|\cdots|a_r|\varepsilon_\lambda|b_1|\cdots|b_s,
	\]
	where $\target(a_r)=\source(b_1)=\lambda$.
	The proof is obvious.
	In fact, by using the replacements of type C2 we get
	\begin{align*}
		a_1|\cdots|a_r|\epsilon_\lambda|b_1|\cdots|b_s
		&\equiv 
		a_1|\cdots|a_r|b_1|\cdots|b_s|\epsilon_{\target(b_s)},
	\end{align*}
	which is exactly $a_1|\cdots|a_r|b_1|\cdots|b_s=a_1|\cdots|a_r|\varepsilon_\lambda|b_1|\cdots|b_s$ by the definition of 
	the congruence $\equiv$.
	
	Now, we turn to the proof of \textit{i}; i.e., we prove that the map
	$\iota$ restricted to $Q\smallsetminus\mathcal{E}$ is an embedding. Let $x$ and $y$ be elements of $Q\smallsetminus\mathcal{E}$ such that $\iota(x) = \iota(y)$. The relation $\iota(x) = \iota(y)$ implies $x\equiv y$, and hence
	$$x\star y\in\mathcal{E},\;y\star x\in\mathcal{E}.$$
	Since $(Q,\star)$ is unital, this implies $x=y$. Therefore, $\iota$ is an embedding.
	
	Moreover, if $\alpha$ has length $r$ and $\beta$ has length $s$, and $\alpha$ and $\beta$ are equivalent in $\Cc(Q)$, then it takes $s$ relations to bring $\alpha$ into $\alpha | (\epsilon_{\target(\alpha)})^s$; it takes $(r+s)^2$ relations to bring $\alpha | (\epsilon_{\target(\alpha)})^s$ into $\beta | (\epsilon_{\target(\beta)})^r$; and it finally takes $r$ relations to bring $\beta | (\epsilon_{\target(\beta)})^r$ into $\beta$. This amounts to $r+s+r^2+s^2+2rs$ total relations, thus the category $\Cc(Q)$ satisfies a quadratic isoperimetric inequality.
	
	Now, if the additional condition \eqref{eqn:add_cond} holds then, for all $x\neq y\in Q\smallsetminus\mathcal{E}$ such that $x\star y$ is defined, we have that both $x\star y$ and $ y\star x$ lie in $ Q\smallsetminus\mathcal{E}$. As a consequence, no $\epsilon$'s appear in the set of relations
	\begin{equation*}
		\big\lbrace x|(x\star y)\sim y|(y\star x)\text{ for all }x\neq y\in Q\smallsetminus\mathcal{E}\text{ such that }x\star y\text{ is defined}\big\rbrace.
	\end{equation*}
	Therefore, since the $\epsilon$'s are modded out by the set of relations 
	\begin{equation*}
		\big\lbrace\epsilon_\lambda\sim \varepsilon_\lambda\text{ for all }\lambda\in\Obj(Q)\big\rbrace,
	\end{equation*}
	we can simply omit the $\epsilon$'s from the set of generators, thus obtaining 
	$$\Cc(Q) = \Big\langle Q\smallsetminus\mathcal{E}\;\Big|\; x|(x\star y) \sim y|(y\star x)\text{ for all }x\neq y \in Q\smallsetminus \mathcal{E}\text{ such that }x\star y\text{ is defined}\Big\rangle^+.$$
	
	For the proof of \textit{iii}, we notice that the presentation \eqref{presentation} contains no $\varepsilon$-relations. Then \textit{iii} follows directly from Lemma \ref{lem:epsilon-rel}.
	
	The presentation \eqref{presentation} of $\Cc(Q)$ is short right-complemented. The syntactic right-complement $\vartheta(x, y)$ $(x, y\in Q\smallsetminus\mathcal{E})$ coincides with $(x\star y)^\bullet$.
	Recall that the extension $\vartheta^*$ is constructed by ``filling the grid'', which is 
	almost the same way $\star$ is extended on $\Path(Q)$: therefore, $\vartheta^*(u, v)$ $(u, v\in\Path(Q\smallsetminus\mathcal{E}))$ coincides with $(u\star v)^\bullet$.
	Since $(\Path(Q), \star)$ is a weak RC-system, $\star$ satisfies the RC-law, which induces the sharp $\vartheta$-cube condition, because $(u^\bullet\star v^\bullet)^\bullet=(u\star v)^\bullet$ for $u, v\in\Path(Q)$.
	From Proposition \ref{prop:complemented-implies-lcms} we obtain that $\Cc(Q)$ is left-cancellative,
	admits conditional right-lcms, 
	and its complementation 
	(on the right) of the elements represented by $u, v\in\Path(Q\smallsetminus\mathcal{E})$ is given by 
	the element represented by $(u\star v)^\bullet$.
	
	If two elements $x$ and $y$ admit a right-lcm, then this is unique up to right-multiplication
	by invertible elements (Lemma \ref{lem:lcms-unique-up-to...}): since $\Cc(Q)$ has no nontrivial invertible elements, the right-lcms, when they exist, are unique. This concludes the proof of \textit{iv}.
	
	Noetherianity follows from the fact that the presentation \eqref{presentation} is homogeneous (Proposition \ref{prop:homogeneous-implies-noetherian}). The atoms are exactly the elements represented by paths of length 1, i.e., the elements of $Q\smallsetminus\mathcal{E}$. 
	
	We denote by $E'$ the closure of $Q\smallsetminus\mathcal{E}$ under right-lcms and the complementation $\backslash_R$ (on the right) and by $E$ the closure of $Q\smallsetminus\mathcal{E}$ under right-lcms. By Proposition \ref{prop:closure-is-Garside-family}, $E'$ is a Garside subfamily. Moreover,
	\begin{align*}
		&g\backslash_R\mathrm{lcm}(f_1, f_2, \ldots, f_l)=\mathrm{lcm}(g\backslash_R f_1, g\backslash_R f_2, \ldots,
		g\backslash_R f_l),
		\\
		&(g_1g_2)\backslash_R\mathrm{lcm}(f_1, f_2, \ldots, f_l)=g_2\backslash_R(g_1\backslash_R\mathrm{lcm}(f_1, f_2, \ldots, f_l)),
	\end{align*}
	and $E$ is thereby closed under the complementation $\backslash_R$ on the right.
	By Proposition \ref{prop:closure-is-Garside-family}, this is also the smallest Garside family of $\Cc(Q)$ containing $(Q\smallsetminus\mathcal{E})\cup\mathbf{1}_{\Cc(Q)}$.
\end{proof}
\section{Quiver-theoretic YBMs and their structure categories}\label{section:structure}

\noindent Now we have described the main result about weak RC-systems, it is time to 
apply them to the investigation of the structure category of quiver-theoretic YBMs. 
\subsection{Quiver-theoretic YBMs, non-degeneracy and involutivity}
We now recall from \cite{andruskiewitsch2005quiver, matsumotoshimizu} the notion of quiver-theoretic YBM. We define the non-degeneracy and involutivity properties, and express all conditions in components.
\begin{defin}
	Let $\Aa$ be a quiver over a non-empty set of vertices $\Lambda$. A morphism of quivers $\sigma\colon \Aa\otimes \Aa\to\Aa\otimes\Aa$ is a (quiver-theoretic) \textit{Yang--Baxter map} (YBM) on $\Aa$ if the Yang--Baxter equation (YBE)
	$$(\sigma\otimes\id)(\id\otimes\sigma)(\sigma\otimes\id) = (\id\otimes\sigma)(\sigma\otimes\id)(\id\otimes\sigma)$$
	holds. This is an equation of morphisms $\Aa\otimes\Aa\otimes\Aa\to \Aa\otimes\Aa\otimes\Aa$. We call the pair $(\Aa,\sigma)$ a \textit{braided quiver}. Notice that we do not assume that $\sigma$ is bijective.
\end{defin}
\begin{prop}\label{prop:DYBE-and-I-in-components}
	Let $\Aa$ be a quiver over $\Lambda$, and let $\sigma(x,y) = (x\rightharpoonup y, x\leftharpoonup y)$ define a morphism of quivers $\Aa\otimes\Aa\to\Aa\otimes\Aa $. Then, the YBE for $\sigma$ is rewritten as follows in terms of the components:
	\begin{align}
		\label{yb1}\tag{YB1}&(a\rightharpoonup b )\rightharpoonup \left((a\leftharpoonup b)\rightharpoonup c\right) = a\rightharpoonup (b\rightharpoonup c);\\
		\label{yb2}\tag{YB2}&(a\rightharpoonup b )\leftharpoonup \left( (a\leftharpoonup b)\rightharpoonup c\right) = \left( a\leftharpoonup (b\rightharpoonup c)\right)\rightharpoonup (b\leftharpoonup c);\\
		\label{yb3}\tag{YB3}&(a\leftharpoonup b)\leftharpoonup c = \left( a\leftharpoonup (b\rightharpoonup c)\right) \leftharpoonup (b\leftharpoonup c);
	\end{align}
	for all $a,b,c\in\Aa$ such that $a|b|c$ is a well-defined path.\end{prop}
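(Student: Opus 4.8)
The plan is to evaluate both sides of the Yang--Baxter equation on a generic composable path $a|b|c$ of length three in $\Aa$, and then to read off the three arrow-valued identities by comparing the three entries of the resulting length-three paths. First I would record the action of the two building blocks on a (composable) triple: writing $\sigma(x,y)=(x\rightharpoonup y, x\leftharpoonup y)$, the morphism $\sigma\otimes\id$ sends $(x,y,z)$ to $(x\rightharpoonup y,\, x\leftharpoonup y,\, z)$, whereas $\id\otimes\sigma$ sends $(x,y,z)$ to $(x,\, y\rightharpoonup z,\, y\leftharpoonup z)$.

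Before iterating these, I would check that each step stays inside $\Path_3(\Aa)$, so that all the nested expressions are legitimate. This is precisely where the hypothesis that $\sigma$ is a morphism of quivers over $\Lambda$ enters: it preserves sources and targets, so for any composable pair $x|y$ the image $(x\rightharpoonup y)|(x\leftharpoonup y)$ is again a path of length two with $\source(x\rightharpoonup y)=\source(x)$, $\target(x\rightharpoonup y)=\source(x\leftharpoonup y)$, and $\target(x\leftharpoonup y)=\target(y)$. Consequently applying $\sigma\otimes\id$ or $\id\otimes\sigma$ to a composable triple yields a composable triple, and every intermediate expression below is well defined.

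Next, following the convention that composition without $\circ$ is read from left to right, I would apply the three factors of each side in order. For the left-hand side $(\sigma\otimes\id)(\id\otimes\sigma)(\sigma\otimes\id)$ this produces, after the three substitutions,
\begin{align*}
\bigl((a\rightharpoonup b)\rightharpoonup((a\leftharpoonup b)\rightharpoonup c),\ (a\rightharpoonup b)\leftharpoonup((a\leftharpoonup b)\rightharpoonup c),\ (a\leftharpoonup b)\leftharpoonup c\bigr),
\end{align*}
while the right-hand side $(\id\otimes\sigma)(\sigma\otimes\id)(\id\otimes\sigma)$ produces
\begin{align*}
\bigl(a\rightharpoonup(b\rightharpoonup c),\ (a\leftharpoonup(b\rightharpoonup c))\rightharpoonup(b\leftharpoonup c),\ (a\leftharpoonup(b\rightharpoonup c))\leftharpoonup(b\leftharpoonup c)\bigr).
\end{align*}
Equating the first entries gives \eqref{yb1}, the second entries give \eqref{yb2}, and the third entries give \eqref{yb3}. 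Conversely, since two paths of length three coincide precisely when their three entries coincide, the simultaneous validity of \eqref{yb1}--\eqref{yb3} is equivalent to the YBE, which establishes the claimed reformulation.

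The computation is a purely mechanical substitution, so there is no genuine obstacle of a conceptual nature. The only points demanding care are, first, respecting the left-to-right composition convention so that the three factors on each side are applied in the correct order; and second, tracking which pair of tensor slots each copy of $\sigma$ acts on at each stage, so that the nesting of $\rightharpoonup$ and $\leftharpoonup$ in the three entries reproduces the right-hand sides of \eqref{yb1}--\eqref{yb3} exactly rather than a permuted or mirror version. I expect the bookkeeping of these two conventions to be the only place where a sign-free error could creep in.
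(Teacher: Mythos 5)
Your proposal is correct and is exactly the computation the paper has in mind: the paper's proof is simply ``It is an easy computation,'' and your step-by-step evaluation of both sides of the YBE on a composable path $a|b|c$, with the check that quiver-morphism compatibility keeps every intermediate triple composable, supplies precisely that computation (and both of your final triples, hence \eqref{yb1}--\eqref{yb3}, are the right ones). Nothing is missing; the remark that the equivalence goes both ways because length-three paths agree iff their entries agree correctly completes the ``rewritten as'' claim.
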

\begin{proof}
	It is an easy computation.
\end{proof}
\begin{defin}
	A quiver-theoretic YBM $\sigma\colon \Aa\otimes\Aa\to\Aa\otimes\Aa$, described as before by $\sigma(x,y) = (x\rightharpoonup y, x\leftharpoonup y)$, is \textit{left-non-degenerate}
	if the maps $$x\rightharpoonup\blank\colon \Aa(\target(x),\Lambda)\to \Aa(\source(x),\Lambda)$$
	are 1:1 for all $x\in\Aa$. It is \textit{right-non-degenerate} if the maps
	$$\blank\leftharpoonup y\colon \Aa(\Lambda, \source(y))\to \Aa(\Lambda,\target(y))$$
	are 1:1 for all $y\in\Aa$. It is \textit{non-degenerate} if it is both left- and right-non-degenerate.
\end{defin}
\begin{defin}
	A morphism of quivers $\sigma\colon\Aa\otimes\Aa\to\Aa\otimes\Aa$ is \textit{involutive} if $\sigma^2=\id[\Aa\otimes\Aa]$.
\end{defin}

The proof of the following proposition is immediate.
\begin{prop}\label{prop:involutivity-in-components}
	Let $\Aa$ be a quiver over $\Lambda$, and let $\sigma(x,y) = (x\rightharpoonup y, x\leftharpoonup y)$ define a morphism of quivers $\Aa\otimes\Aa\to\Aa\otimes\Aa $. The involutive condition $\sigma^2 = \id[\Aa\otimes \Aa]$ for $\sigma$ is rewritten in components as
	\begin{align}
		\label{i1}\tag{I1}&(a\rightharpoonup b)\rightharpoonup (a\leftharpoonup b) = a;\\
		\label{i2}\tag{I2}&(a\rightharpoonup b)\leftharpoonup (a\leftharpoonup b) = b;
	\end{align}
	for all $a,b\in \Aa$ such that $a|b$ is a well-defined path.
\end{prop}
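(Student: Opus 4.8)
The plan is to unwind the definition of $\sigma^2 = \id[\Aa\otimes\Aa]$ directly in components, expecting that both (I1) and (I2) fall out of a single re-application of $\sigma$ to its own output. First I would observe that $\sigma^2 = \id[\Aa\otimes\Aa]$ is, by definition, the assertion that $\sigma(\sigma(a,b)) = (a,b)$ for every composable path $a|b$ in $\Aa$, i.e.\ for every element $(a,b) \in \Aa\otimes\Aa$.

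Next I would substitute the component description and re-apply $\sigma$. Writing $\sigma(a,b) = (a \rightharpoonup b, a \leftharpoonup b)$, the key point is that this image is again a genuine element of $\Aa\otimes\Aa = \Path_2(\Aa)$: since $\sigma$ is a morphism of quivers over $\Lambda$ with codomain $\Aa\otimes\Aa$, the pair $(a \rightharpoonup b, a \leftharpoonup b)$ is composable, and hence $\sigma$ may legitimately be evaluated on it. Applying the same component formula to this pair yields
\[
\sigma\big(a \rightharpoonup b,\, a \leftharpoonup b\big) = \Big((a \rightharpoonup b) \rightharpoonup (a \leftharpoonup b),\; (a \rightharpoonup b) \leftharpoonup (a \leftharpoonup b)\Big).
\]

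Finally I would equate this expression with $(a,b)$ coordinatewise. The first coordinate gives precisely (I1), and the second gives precisely (I2); conversely, if (I1) and (I2) both hold for every composable $a|b$, then the displayed equation shows $\sigma(\sigma(a,b)) = (a,b)$, so $\sigma^2 = \id[\Aa\otimes\Aa]$. There is no real obstacle: the only substantive point—and the reason the statement is not wholly formal—is the well-definedness noted above, namely that the source/target constraints ensuring composability of $(a \rightharpoonup b, a \leftharpoonup b)$ are inherited automatically from $\sigma$ being a morphism in $\Cat{Quiv}_\Lambda$. This is why the proposition is immediate.
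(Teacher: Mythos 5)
Your proposal is correct, and it is exactly the argument the paper has in mind: the paper gives no written proof, declaring the proposition immediate, and the content of that "immediate" is precisely your unwinding of $\sigma^2(a,b)=(a,b)$ in components, with the one substantive observation that composability of $(a\rightharpoonup b,\, a\leftharpoonup b)$ is guaranteed because $\sigma$ is a morphism in $\Cat{Quiv}_\Lambda$ valued in $\Aa\otimes\Aa$.
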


\subsection{Structure categories and structure groupoids of YBMs}
The notions of structure monoid and structure group, for a set-theoretic YBM, are well known. Here we introduce their straightforward quiver-theoretic analogues: the structure category $\Cc(\sigma)$ and the structure groupoid $\Gg(\sigma)$
of a YBM $\sigma$ on a quiver $\Aa$. Structure groupoids already appear in \cite{andruskiewitsch2005quiver}.
\begin{defin}\label{defin:struct-grp}
	Let $(\Aa, \sigma)$ be a braided quiver, where $\source,\target$ denote the source and target maps of $\Aa$ respectively, and the YBM is given by the morphism $\sigma\colon \Aa\otimes\Aa\to\Aa\otimes\Aa $, $\sigma(x,y) = (x\rightharpoonup y, x\leftharpoonup y)$. The \emph{structure category} $\Cc(\sigma)$ is defined as $\Cc(\sigma) = \langle \Aa\mid R\rangle^+$, where $R$ is the set of all relations $x|y\sim (x\rightharpoonup y)|(x\leftharpoonup y)$ for all $x,y\in\Aa$, $\source(y) = \target(x)$.	The \emph{structure groupoid} $\Gg(\sigma)$ is defined as	$\Gg(\sigma) = \langle \Aa\mid R\rangle$, where $R$ is the set of relations defined above.
\end{defin}

The proof of the following proposition is immediate, using Definitions \ref{def:envGpd} and \ref{defin:struct-grp}.

\begin{prop}\label{lemma:Gsigma_is_Env}
	For a YBM $\sigma$ on a quiver $\Aa$, one has $\Gg(\sigma) \cong \Env(\Cc(\sigma))$.
\end{prop}
\begin{rem}
The canonical isomorphism $\Phi_\sigma\colon \Env(\Cc(\sigma))\to \Gg(\sigma)$ is natural in $\sigma$, in the following sense: if $(\Aa,\sigma)$ and $ (\Bb,\tau)$ are braided quivers over $\Lambda$, and $f\colon \Aa\to \Bb$ is a morphism in $\Cat{Quiv}_\Lambda$ intertwining $\sigma$ and $\tau$, this induces a square
$$\begin{tikzcd}
	\Env(\Cc(\sigma))\ar[d]\ar[r, "\Phi_{\sigma}"]&\Gg(\sigma)\ar[d]\\
	\Env(\Cc(\tau))\ar[r,"\Phi_\tau"]&\Gg(\tau)
\end{tikzcd}$$
and the naturality of $\Phi$ is the commutativity of the above square.
\end{rem}

\section{The interplay between weak RC-systems and YBMs}\label{section:Garside}
\noindent In this section, we establish a connection between suitable YBMs and suitable cyclic systems. In one direction, we prove that left-non-degenerate involutive YBMs provide left-non-degenerate weak RC-systems, and we prove a similar result for weak co-RC-systems. As a converse connection, we prove that categories with a suitable presentation (whose relations incorporate the RC-law) are structure categories of YBMs.

\subsection{From YBMs to weak RC-systems}\label{subsection:YBMandRC}
We construct here weak RC-systems and weak co-RC-systems from suitable YBMs.

\begin{prop}\label{prop:braided_quivers_are_wRCs}
Let $\sigma$ be a YBM on a quiver $\Aa$, where we write $\sigma(x,y) = (x\rightharpoonup y, x\leftharpoonup y)$ as before. Suppose $\sigma$ is left-non-degenerate and involutive. Set $x\star y = (x\rightharpoonup\blank)^{-1}(y)$, where the inverse is well-defined because of the left-non-degeneracy condition. Then, $x\star y$ is defined whenever $\source(x) = \source(y)$, and $(\Aa,\star)$ is a left-non-degenerate weak RC-system.
\end{prop}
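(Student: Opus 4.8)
The plan is to verify the three axioms of a weak RC-system directly from the definition $x\star y := (x\rightharpoonup\blank)^{-1}(y)$, translating each into a statement about $\rightharpoonup$ and $\leftharpoonup$ and then invoking the component equations \eqref{yb1}--\eqref{yb3}, \eqref{i1}, \eqref{i2}. Axiom \textit{i} is immediate: by definition $x\rightharpoonup\blank$ maps $\Aa(\target(x),\Lambda)$ to $\Aa(\source(x),\Lambda)$, so its inverse $x\star\blank$ sends arrows with source $\source(x)$ to arrows with source $\target(x)$; hence $x\star y$ makes sense exactly when $\source(y)=\source(x)$, and then $\source(x\star y)=\target(x)$, giving part of the diagram in axiom \textit{ii}. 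Left-non-degeneracy of $(\Aa,\star)$ is then just a restatement of the bijectivity of $x\rightharpoonup\blank$, since $x\star\blank$ is its inverse.

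The crux is to unwind the defining relation into a usable identity. Writing $y=x\rightharpoonup z$, i.e. $z=x\star y$, the involutivity equations \eqref{i1}--\eqref{i2} become the key translation: I expect to show that $x\star y$ and $y\star x$ are precisely the two outputs of $\sigma$ in a way that makes the square in axiom \textit{ii} commute. Concretely, setting $a:=y$ and $b:=x\star y$ in \eqref{i1}--\eqref{i2} (so that $a\rightharpoonup b = y\rightharpoonup(x\star y)$), I would identify $y\star x = (y\rightharpoonup\blank)^{-1}(x)$ and check that $\target(x\star y)=\target(y\star x)$, which is what axiom \textit{ii} demands. The symmetric claim that $y\star x$ is defined whenever $x\star y$ is follows because both are defined exactly under the common-source condition $\source(x)=\source(y)$, which is symmetric.

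The main obstacle — and the heart of the proof — is axiom \textit{iii}, the RC-law $(x\star y)\star(x\star z) = (y\star x)\star(y\star z)$. The plan is to translate this cube rule into the Yang--Baxter equation. Since $\star$ is built from the inverse of $\rightharpoonup$, I anticipate that \eqref{yb1} is the equation that governs how $\rightharpoonup$ interacts with itself, and that applying $(\cdot\rightharpoonup\blank)^{-1}$ to \eqref{yb1} yields exactly the associativity-type identity encoding the RC-law. The careful bookkeeping here is to confirm that the left-non-degeneracy makes every inverse in sight well-defined, and that the domain conditions (``whenever $x\star y$, $x\star z$, and $(x\star y)\star(x\star z)$ are defined'') are automatically met under the source-matching hypotheses, so that no partiality obstructs the computation. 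I would phrase \eqref{yb1} as $(a\rightharpoonup b)\rightharpoonup((a\leftharpoonup b)\rightharpoonup c)=a\rightharpoonup(b\rightharpoonup c)$, invert the outermost $\rightharpoonup$'s step by step, and substitute $a,b,c$ by the appropriate arrows (expressed via $\star$ and the involutivity relations \eqref{i1}--\eqref{i2}) to recover the RC-law. This substitution step is where most of the care is needed, because one must track which arrow plays the role of $a\leftharpoonup b$ after the change of variables.
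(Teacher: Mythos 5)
Your proposal is essentially correct, and it rests on the same key identity as the paper's proof: the compatibility $y\star x = x\leftharpoonup(x\star y)$, which is \eqref{i1} read through the definition of $\star$ (this is \eqref{eqn:claim} in the paper). One concrete correction is needed in your axiom-\textit{ii} step: the involutivity relations \eqref{i1}--\eqref{i2} must be applied to the composable pair $a:=x$, $b:=x\star y$ (composable because $\source(x\star y)=\target(x)$), not to $a:=y$, $b:=x\star y$ as you wrote; the path $y\,|\,(x\star y)$ is in general not composable, since $\target(y)$ need not equal $\target(x)$. With the correct substitution, \eqref{i1} gives $y\rightharpoonup\left(x\leftharpoonup(x\star y)\right)=x$, hence $x\leftharpoonup(x\star y)=(y\rightharpoonup\blank)^{-1}(x)=y\star x$, and the remaining requirement $\target(x\star y)=\target(y\star x)$ of axiom \textit{ii} follows because $\sigma$, being a morphism of quivers over $\Lambda$, preserves targets.

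Where you genuinely depart from the paper is the RC-law. The paper forms, for $x,y,z$ with common source, the composable triple $a:=z$, $b:=z\star y$, $c:=(z\star y)\star(z\star x)$, and then uses \emph{both} \eqref{yb1} and \eqref{yb2}, together with \eqref{eqn:claim}, to close the cube of Figure \ref{fig:cube-for-star}. Your plan of eliminating $\leftharpoonup$ from \eqref{yb1} and inverting does work, and among the Yang--Baxter components it needs only \eqref{yb1}: substituting $a\leftharpoonup b=(a\rightharpoonup b)\star a$ into \eqref{yb1}, writing $u:=a\rightharpoonup b$ (so $b=a\star u$), $v:=(u\star a)\rightharpoonup c$ (so $c=(u\star a)\star v$) and $w:=u\rightharpoonup v$ (so $v=u\star w$), and applying first $a\star\blank$ and then $(a\star u)\star\blank$ to the resulting identity yields
\begin{equation*}
(a\star u)\star(a\star w)=(u\star a)\star(u\star w),
\end{equation*}
which is the RC-law for the common-source triple $(a,u,w)$; left-non-degeneracy guarantees both that every inversion is legitimate and that every triple with a common source arises this way. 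So your route is slightly leaner than the paper's (no \eqref{yb2} required), at the cost of a less geometric, more computational change of variables, whereas the paper's cube argument also records along the way the face identities it reuses. Apart from the substitution slip above, there is no gap.
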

\begin{proof}
Before we plunge into the proof of the RC-law, there is one crucial remark to point out. We would like the two squares
\begin{center}
	\begin{tikzpicture}
		\draw[-Stealth] (-2.5,0.5) to node[above] {$x$} (-1.5,0.5);
		\draw[-Stealth] (-2.5,0.5) to node[left] {$y$} (-2.5,-0.5);
		\draw[-Stealth] (-1.5,0.5) to node[right] {$x\star y$} (-1.5,-0.5);
		\draw[-Stealth] (-2.5,-0.5) to node[below] {$y\star x$} (-1.5,-0.5);
		\draw[-Stealth] (1.5,0.5) to node[above] {$x$} (2.5,0.5);
		\draw[-Stealth] (1.5,0.5) to node[left] {$x\rightharpoonup z$} (1.5,-0.5);
		\draw[-Stealth] (2.5,0.5) to node[right] {$z$} (2.5,-0.5);
		\draw[-Stealth] (1.5,-0.5) to node[below] {$x\leftharpoonup z$} (2.5,-0.5);
	\end{tikzpicture}
\end{center}
to be the same. This forces the definition $x\star y = z = (x\rightharpoonup\blank)^{-1}(y)$. However, we now have \textit{two} different definitions of the lower edge $y\star x$: in the left-hand square, it is defined as $y\star x = (y\rightharpoonup\blank)^{-1}(x)$; while, in the right-hand square, it is defined as $x\leftharpoonup z = x\leftharpoonup(x\star y)$. We conclude that, for this definition to make sense, we must prove
\begin{equation}\label{cond:with_y}(y\rightharpoonup\blank)^{-1}(x) =  x\leftharpoonup\left( (x\rightharpoonup\blank)^{-1}(y) \right)\quad \text{for all }x,y\in \Aa, \source(x)=\source(y).\end{equation}
We manipulate the equation as follows:
\begin{align*}
	(y\rightharpoonup\blank)^{-1}(x) =  x\leftharpoonup\left( (x\rightharpoonup\blank)^{-1}(y) \right)\iff& x = y\rightharpoonup\left(x\leftharpoonup\left((x\rightharpoonup\blank)^{-1}(y)\right)\right)\\
	\iff &x = \left(x\rightharpoonup z\right)\rightharpoonup\left(x\leftharpoonup z\right).
\end{align*}
Now, the condition
\begin{equation}\label{cond:with_z} x = \left(x\rightharpoonup z\right)\rightharpoonup\left(x\leftharpoonup z\right) \quad \text{for all }x,y\in \Aa, \target(x)=\source(z)\end{equation}
is Condition \eqref{i1} from Proposition \ref{prop:involutivity-in-components}, which holds true because $\sigma$ is involutive. Notice that $x|z$ is a well-defined path, hence applying \eqref{i1} makes sense. Moreover, since $z=(x\rightharpoonup\blank)^{-1}(y)$ holds, $y$ and $z$ can be obtained from each other uniquely, and consequently \eqref{cond:with_z} holds for all $z$ if and only if \eqref{cond:with_y} holds for all $y$. We have obtained the relation
\begin{equation}\label{eqn:claim}y\star x = x\leftharpoonup z = x\leftharpoonup (x\star y),\end{equation}
which is going to become useful in a moment.

We now turn to the proof of the RC-law. Consider $x,y,z$ sharing the same source. Define
$$a= z,\quad b=z\star y,\quad c=(z\star y)\star (z\star x).$$
We are going to apply the YBE to the path $a|b|c$. 

It is easy to check that the path $r|(r\star s)$ is well-defined for all $r,s\in Q$, $\source(r) = \source(s)$. Therefore, $a|b|c$ is a well-defined path.

We now have to show that the cube in Figure \ref{fig:cube-for-star} closes; cf.~\cite[Proposition XIII.1.34]{dehornoy2015foundations}.

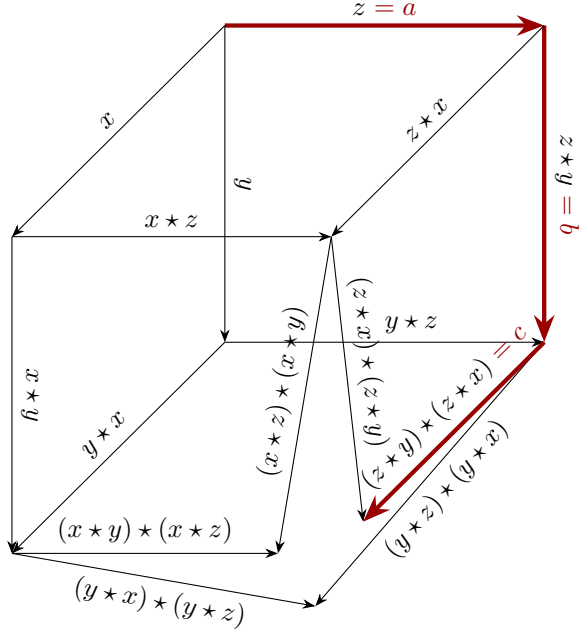
\begin{figure}\centering
	\begin{tikzpicture}[x=0.7cm, y=0.7cm]
		\node[opacity=0, text=gray, text opacity=0.5,minimum width=3cm, minimum height=1cm,transform shape,  yslant=1 ] at (4.3,1.6) {\huge\textbf{face 2}};
		\node[opacity=0, text=gray, text opacity=0.5,minimum width=3cm, minimum height=1cm,transform shape,  xslant=1 ] at (1,4) {\huge\textbf{face 3}};
		\node[opacity=0,text = gray, text opacity=0.5]() at (3,3) {\huge\textbf{face 1}};
		\draw[-Stealth, color=Maroon, ultra thick] (0,6) to node[sloped,above] {\textcolor{black}{$z$} \textcolor{Maroon}{$=a$}} (6,6);
		\draw[-Stealth] (0,6) to node[sloped,above] {$x$} (-4,2);
		\draw[-Stealth] (0,6) to node[sloped,above] {$y$} (0,0);
		\draw[-Stealth, color=Maroon, ultra thick] (6,6) to node[sloped,above] {\textcolor{black}{$z\star y$} \textcolor{Maroon}{$=b$}} (6,0);
		\draw[-Stealth] (6,6) to node[sloped,above] {$z\star x$} (2,2);
		\draw[-Stealth] (-4,2) to node[sloped,above] {$x\star z$} (2,2);
		\draw[-Stealth] (-4,2) to node[sloped,above] {$x\star y$} (-4,-4);
		\draw[-Stealth] (0,0) to node[sloped,above] {$y\star x$} (-4,-4);
		\draw[-Stealth] (0,0) to node[sloped,above] {$\quad \quad y\star z$} (6,0);
		\draw[-Stealth] (-4,-4) to node[sloped,above] {$(x\star y)\star(x\star z)$} (1,-4);
		\draw[-Stealth] (2,2) to node[sloped,above] {$(x\star z)\star(x\star y)$} (1,-4);
		\draw[-Stealth] (2,2) to node[sloped,above] {$(z\star x)\star(z\star y)\;\;\;\;$} (2.6,-3.4);
		\draw[-Stealth] (-4,-4) to node[sloped,below] {$(y\star x)\star(y\star z)$} (1.7,-5);
		\draw[-Stealth] (6,0) to node[sloped,below] {$(y\star z)\star(y\star x)$} (1.7,-5);
		\draw[-Stealth, color=Maroon, ultra thick] (6,0) to node[sloped,above] {\textcolor{black}{\;\;\;$(z\star y)\star (z\star x)$} \textcolor{Maroon}{$=c$} } (2.6,-3.4);
	\end{tikzpicture}
	\caption{Cube relation for $\star$.}\label{fig:cube-for-star}
\end{figure}
The definition of $\star$ yields 
\begin{align}
	\label{face1}\tag{face 1}&& &y = a\rightharpoonup b, &&y\star z = a\leftharpoonup b;\\
	\label{face2}\tag{face 2}&&&z\star x = b\rightharpoonup c,&& (z\star x)\star (z\star y) = b\leftharpoonup c;\\
	\label{face3}\tag{face 3}&&&x= a\rightharpoonup (z\star x) = a\rightharpoonup(b\rightharpoonup c), && x\star z = a\leftharpoonup(b\rightharpoonup c).
\end{align}
Now, by \eqref{yb1}, and by \eqref{face3}, we get $$x = a\rightharpoonup(b\rightharpoonup c) = (a\rightharpoonup b )\rightharpoonup \left((a\leftharpoonup b)\rightharpoonup c\right).$$	
By definition of $\star$, the relations $x = (a\rightharpoonup b )\rightharpoonup \left((a\leftharpoonup b)\rightharpoonup c\right)$ and $y = a\rightharpoonup b$ yield 
$$y\star x = (a\leftharpoonup b)\rightharpoonup c.$$
By \eqref{eqn:claim}, one has
$$x\star y = y\leftharpoonup (y\star x) = (a\rightharpoonup b )\leftharpoonup \left((a\leftharpoonup b)\rightharpoonup c\right).$$
Moreover,
$$x\star y = (a\rightharpoonup b )\leftharpoonup \left((a\leftharpoonup b)\rightharpoonup c\right) =  \left(a\leftharpoonup (b\rightharpoonup c)\right)\rightharpoonup(b\leftharpoonup c)$$
follows from \eqref{yb2}.

We now turn to the face bordered by $x\star z$ and $x\star y$. The equations $x\star z = a\leftharpoonup(b\rightharpoonup c)$ and $x\star y = (a\leftharpoonup(b\rightharpoonup c))\rightharpoonup(b\leftharpoonup c)$ yield, by definition of $\star$,
$$(x\star z)\star (x\star y) = b\leftharpoonup c.$$
In particular, we obtained 
$$(x\star z)\star (x\star y) = b\leftharpoonup c = (z\star x)\star (z\star y).$$
This, by generality of $x,y,z$, is enough to conclude.

All the other properties for $(\Aa, \star)$ to be a weak RC-system are easily verified. 

If $x\star y = x\star y'$ holds, then we have $(x\rightharpoonup\blank)^{-1}(y) = (x\rightharpoonup\blank)^{-1}(y')$, whence $y=y'$: this proves the left-non-degeneracy.
\end{proof}
\begin{prop}
\label{prop:braided_quivers_are_w_co-RCs}
Let $\sigma$ be a YBM on a quiver $\Aa$, where we write  $\sigma(x,y) = (x\rightharpoonup y, x\leftharpoonup y)$ as before. Suppose that $\sigma$ is right-non-degenerate and involutive. Set $x\bullet y = (\blank\leftharpoonup x)^{-1}(y)$, where the inverse is well-defined because of the right-non-degeneracy condition. Then, $x\bullet y$ is defined whenever $\target(x) = \target(y)$, and $(\Aa,\bullet)$ is a left-non-degenerate weak co-RC-system.
\end{prop}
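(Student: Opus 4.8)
The plan is to mirror the proof of Proposition \ref{prop:braided_quivers_are_wRCs}, exchanging the roles of the two actions: where that proof used the left action $\rightharpoonup$ and left-non-degeneracy to build a weak RC-system, here I would use the right action $\leftharpoonup$ and right-non-degeneracy to build a weak co-RC-system. Writing $w := x\bullet y$, the defining equation $(\blank\leftharpoonup x)^{-1}(y) = w$ means $w\leftharpoonup x = y$, so that $\sigma(w,x) = (w\rightharpoonup x,\, w\leftharpoonup x) = (w\rightharpoonup x,\, y)$; the co-RC square with top edge $x\bullet y$, right edge $x$ and bottom edge $y$ then forces its left edge to be $w\rightharpoonup x = (x\bullet y)\rightharpoonup x$. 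Since the defining property of $\bullet$ gives a second description $y\bullet x = (\blank\leftharpoonup y)^{-1}(x)$ of this same left edge, the first thing to check is the compatibility $(x\bullet y)\rightharpoonup x = y\bullet x$.

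First I would dispose of this well-definedness issue, exactly as the earlier proof disposed of \eqref{cond:with_y}. Unravelling $y\bullet x = (\blank\leftharpoonup y)^{-1}(x)$ into $(y\bullet x)\leftharpoonup y = x$ and substituting $y = w\leftharpoonup x$, the required identity becomes $(w\rightharpoonup x)\leftharpoonup(w\leftharpoonup x) = x$, which is precisely condition \eqref{i2} of Proposition \ref{prop:involutivity-in-components} and hence holds by involutivity. This both makes $\bullet$ well defined on the square and records the useful identity $y\bullet x = (x\bullet y)\rightharpoonup x$, the analogue of \eqref{eqn:claim}.

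The core of the argument is the co-RC-law. Here I would set up the cube dual to Figure \ref{fig:cube-for-star}---the mirror in which the three initial arrows share a common target and the faces encode $\bullet$ via $\leftharpoonup$---and apply the YBE to a suitable corner path $a|b|c$, chosen in analogy with the choice $a=z$, $b=z\star y$, $c=(z\star y)\star(z\star x)$ of the RC case. The three faces are then read off from the definition of $\bullet$, after which \eqref{yb3} plays the role that \eqref{yb1} played before, while \eqref{yb2} is used exactly as before; chasing the cube yields $(x\bullet y)\bullet(x\bullet z) = (y\bullet x)\bullet(y\bullet z)$. The remaining axioms are immediate: the source--target prescriptions of a weak co-RC-system are built into the definition of $\bullet$, and left-non-degeneracy holds because $x\bullet\blank$ is, by construction, the inverse of the bijection $\blank\leftharpoonup x$.

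The main obstacle is bookkeeping rather than substance: one must orient the dual cube correctly and match each edge to the right instance of \eqref{yb2} and \eqref{yb3}, since the reversal interchanges \eqref{yb1} and \eqref{yb3}. A cleaner route that avoids the cube chase altogether is to pass to the opposite quiver $\bar{\Aa}$ and the reversed map $\bar{\sigma}(\bar{v},\bar{u}) := (\overline{u\leftharpoonup v},\, \overline{u\rightharpoonup v})$; a short check in components shows that $\bar{\sigma}$ satisfies the YBE (with \eqref{yb1} and \eqref{yb3} swapped), that it is involutive (using both \eqref{i1} and \eqref{i2}), and that its left-non-degeneracy is exactly the right-non-degeneracy of $\sigma$. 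Under the bar correspondence the operation $\bar{\star}$ of Proposition \ref{prop:braided_quivers_are_wRCs} for $\bar{\sigma}$ becomes $\bullet$ for $\sigma$, and a left-non-degenerate weak RC-system on $\bar{\Aa}$ is precisely a left-non-degenerate weak co-RC-system on $\Aa$; applying Proposition \ref{prop:braided_quivers_are_wRCs} to $\bar{\sigma}$ then gives the result directly.
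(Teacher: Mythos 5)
Your proposal is correct, and its main line is essentially the paper's own proof: the paper also begins by reconciling the two candidate descriptions of the fourth edge of the square, reducing $(x\bullet y)\rightharpoonup x = y\bullet x$ to the involutivity condition \eqref{i2}, and then closes the dual cube on a corner path $a|b|c$ (it takes $a=(x\bullet y)\bullet(x\bullet z)$, $b=x\bullet y$, $c=x$, the mirror of the RC choice). The one discrepancy is harmless bookkeeping: you predict that \eqref{yb2} is needed ``exactly as before'', but the paper's chase uses \eqref{yb3} alone. The reason is that once \eqref{yb3} yields $z=(a\leftharpoonup(b\rightharpoonup c))\leftharpoonup(b\leftharpoonup c)$, the identities $b\leftharpoonup c = y$ and $b\rightharpoonup c = (x\bullet y)\rightharpoonup x = y\bullet x$ give $y\bullet z = a\leftharpoonup(y\bullet x)$, and the definition of $\bullet$ converts this directly into $a=(y\bullet x)\bullet(y\bullet z)$: the conclusion is read off at the corner $a$, whereas in the RC case it sits on the inner edge $b\leftharpoonup c$, which is why \eqref{yb1} \emph{and} \eqref{yb2} were both needed there. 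Your strict dual, which does invoke \eqref{yb2}, also closes the cube; it is simply one step longer. Your alternative route through the opposite quiver is genuinely different from what the paper does, and it is the cleaner option: verifying that $\bar{\sigma}(\bar{v},\bar{u}) = (\overline{u\leftharpoonup v},\overline{u\rightharpoonup v})$ is an involutive left-non-degenerate YBM on $\bar{\Aa}$ (with \eqref{yb1} and \eqref{yb3} exchanged and \eqref{yb2} self-dual), and that the bar transports $\bar{\star}$ to $\bullet$ and left-non-degenerate weak RC-systems on $\bar{\Aa}$ to left-non-degenerate weak co-RC-systems on $\Aa$, turns the paper's informal remark that the proof is ``essentially dual'' into an actual reduction to Proposition \ref{prop:braided_quivers_are_wRCs}. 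What that buys is that no cube needs to be re-chased and every conclusion of the RC case transfers automatically; the only cost is the routine check that reversal preserves the hypotheses.
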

\begin{proof}
On the opposite quiver $\bar{\Aa}$, the morphism $\bar{\sigma}\colon \bar{\Aa}^{\otimes 2}\to \bar{\Aa}^{\otimes 2}$ defined by 
\[  \bar{x}|\bar{z}\mapsto \overline{z\leftharpoonup x}|\overline{z\rightharpoonup x} \]
(where $\bar{\cdot}$ denotes the reverse of the arrows in $\Aa$) is clearly a YBM; and $\overline{x\bullet y} = \bar{x}\,\bar{\star}\, \bar{y}$ where the operation $\bar{\star}$ is defined as in Proposition \ref{prop:braided_quivers_are_wRCs}, but with respect to the YBM $\bar{\sigma}$. One has that $(\Aa,\bullet)$ is a left-non-degenerate weak co-RC-system if and only if $(\bar{\Aa},\bar{\star})$ is a left-non-degenerate weak RC-system. The conclusion, then, follows from Proposition \ref{prop:braided_quivers_are_wRCs}. 
\end{proof}
In force of the previous propositions, the proof of the following result becomes a straightforward verification:
\begin{prop}\label{prop:weakRLCsystem}

Let $\sigma$ be an involutive non-degenerate YBM on a quiver $\Aa$.
If we define $x\;\tilde{\star}\; y = y\bullet x$ as in Proposition $\ref{prop:braided_quivers_are_w_co-RCs}$,
then $(\Aa,\star,\tilde{\star})$ is a weak RLC-system.
\end{prop}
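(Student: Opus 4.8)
The plan is to verify the three defining clauses of Definition \ref{definition:RLCsystem} in turn. Clauses \textit{i} and \textit{ii} come for free from the two preceding propositions. Clause \textit{i} is literally the content of Proposition \ref{prop:braided_quivers_are_wRCs}: since $\sigma$ is involutive and left-non-degenerate, $(\Aa,\star)$ is a weak RC-system. Clause \textit{ii} holds because, unwinding the definition $x\,\tilde{\star}\,y = y\bullet x$, the operation recovered through $x\bullet y = y\,\tilde{\star}\,x$ is exactly the co-RC operation of Proposition \ref{prop:braided_quivers_are_w_co-RCs}; by definition of a weak LC-system, this makes $(\Aa,\tilde{\star})$ a weak LC-system. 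Both propositions apply because $\sigma$ is assumed involutive and two-sidedly non-degenerate.

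The only real work is clause \textit{iii}, and I would translate each compatibility relation from $\tilde{\star}$ back into $\bullet$ before computing. For the first relation, suppose $x\star y$ is defined, i.e.\ $\source(x)=\source(y)$. The defining square of $\star$ shows that $x\star y$ and $y\star x$ have a common target, so $(x\star y)\bullet(y\star x)$ is defined (here one uses that $\bullet$ is defined whenever the targets agree). Since $(y\star x)\,\tilde{\star}\,(x\star y) = (x\star y)\bullet(y\star x) = \big(\blank\leftharpoonup(x\star y)\big)^{-1}(y\star x)$, the asserted identity $x = (y\star x)\,\tilde{\star}\,(x\star y)$ reduces, by injectivity of $\blank\leftharpoonup(x\star y)$, to $x\leftharpoonup(x\star y) = y\star x$. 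This is precisely the relation \eqref{eqn:claim} extracted inside the proof of Proposition \ref{prop:braided_quivers_are_wRCs}, so nothing new must be shown.

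For the second relation, suppose $x\,\tilde{\star}\,y = y\bullet x$ is defined, i.e.\ $\target(x)=\target(y)$. The co-RC square shows that $x\bullet y$ and $y\bullet x$ share a source, so $(x\bullet y)\star(y\bullet x)$ is defined (here one uses that $\star$ is defined whenever the sources agree). Since $(y\,\tilde{\star}\,x)\star(x\,\tilde{\star}\,y) = (x\bullet y)\star(y\bullet x) = \big((x\bullet y)\rightharpoonup\blank\big)^{-1}(y\bullet x)$, the asserted identity $x = (y\,\tilde{\star}\,x)\star(x\,\tilde{\star}\,y)$ reduces, by injectivity of $(x\bullet y)\rightharpoonup\blank$, to $(x\bullet y)\rightharpoonup x = y\bullet x$. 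This is exactly the relation obtained at the start of the proof of Proposition \ref{prop:braided_quivers_are_w_co-RCs}, itself a restatement of the involutivity condition \eqref{i2}. Hence both halves of clause \textit{iii} hold.

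The statement is therefore, as announced, a straightforward verification. I do not anticipate any genuine obstacle: the only points demanding care are the bookkeeping of sources and targets required to see that each composite is defined, and the recognition that the two compatibility identities are \emph{verbatim} the auxiliary relations already derived from involutivity in the two preceding propositions. If there is any subtlety at all, it is purely notational, namely correctly dualising the roles of $\star$ and $\bullet$ when passing through the definition $x\,\tilde{\star}\,y = y\bullet x$.
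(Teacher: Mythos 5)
Your proof is correct and is exactly the verification the paper has in mind: the paper states the result follows from Propositions \ref{prop:braided_quivers_are_wRCs} and \ref{prop:braided_quivers_are_w_co-RCs} by "straightforward verification", and your argument carries this out, reducing the two compatibility identities of Definition \ref{definition:RLCsystem} to the relation \eqref{eqn:claim} and to the relation $(x\bullet y)\rightharpoonup x = y\bullet x$ already established in the proofs of those two propositions.
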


\subsection{YBMs from a class of presented categories}
\label{sec:converse}
We now prove that categories with a suitable presentation produce involutive non-degenerate quiver-theoretic YBMs. This is analogous to Chouraqui's result \cite[Theorem 1 (\textit{ii})]{chouraqui2010garside}, which allows to retrieve a YBM from every Garside group with a suitable presentation. However, unlike \cite[Theorem 1 (\textit{ii})]{chouraqui2010garside}, we do not assume the existence of a Garside structure: this will follow \textit{a posteriori} from Theorem \ref{teo:Garside-structure}
(see also \cite[Proposition  XIII.2.34]{dehornoy2015foundations}).

Suppose that a category $\Cc$ has a presentation $\Cc = \langle \Aa\mid R\rangle^+$, with $\Obj(\Aa) = \Lambda$. We assume 
relations to be ordered ($q\sim r$ and $r\sim q$ are distinct relations),
and for a set of relations $R$, we let $R^{op}$ be the set $\{ r\sim q\mid q\sim r\in R\}$.
The set of redundant relations in $R$ is defined as $R\cap R^{op}$.
We assume the following conditions:
\begin{enumerate}
\item[\textit{i}.] Every relation in $R$ has the form $a|v\sim b|w$ (quadratic relations with $a, b, v, w\in\Aa$). If $x\sim y,\, x\sim y'\in R$ then $y = y'$; if $x\sim y,\, x'\sim y\in R$ then $x = x'$; and if $x\sim y,\, y'\sim x \in R$, then $y=y'$. 
\item[\textit{ii}.] For all $a,b\in \Aa$, with $\source(a) = \source(b)$ and $a\neq b$, there exists a unique relation $a|v\sim b|w$ or $b|w\sim a|v$ in $R$.
\item[\textit{ii}$'$.] For all $a,b\in \Aa$, with $\target(a) = \target(b)$ and $a\neq b$, there exists a unique relation $v|a\sim w|b$ or $w|b\sim v|a$ in $R$.
\item[\textit{iii}.] For all $a\in \Aa$, there exists a unique $z_a\in \Aa(\target(a),\Lambda)$ such that: 
\begin{itemize}
	\item[\textit{iii}a.]if $v\in\Aa(\target(a),\Lambda)\smallsetminus\{ z_a\}$, then there  exist $b\in\Aa(\source(a),\Lambda)\smallsetminus\{ a\}$ and $w\in\Aa$ satisfying $(a|v\sim b|w)\in R\cup R^{op}$;
	\item[\textit{iii}b.]if $(a|z_a\sim b|w)\in R\cup R^{op}$ for some $b, w\in\Aa$, then $b=a$ and $w=z_a$.
\end{itemize}
\item[\textit{iii}$'$.] For all $a\in \Aa$, there exists a unique $z^a\in \Aa(\Lambda, \source(a))$ such that: 
\begin{itemize}
	\item[\textit{iii$'$}a.] if $v\in\Aa(\Lambda, \source(a))\smallsetminus\{ z^a\}$, then there exist $b\in\Aa(\Lambda, \target(a))\smallsetminus\{ a\}$ and $w\in\Aa$ satisfying $(v|a\sim w|b)\in R\cup R^{op}$;
	\item[\textit{iii$'$}b.] if $(z^a|a\sim w|b)\in R\cup R^{op}$ for some $b, w\in\Aa$, then $b=a$ and $w=z^a$.
\end{itemize}
\end{enumerate}
\begin{lem}In the above hypotheses, the following conditions hold.
\begin{enumerate}
	\item[\textit{iv}.]If $(a|v\sim a|w)\in R$ for some $a, v, w\in\Aa$, then $v=w =z_a$. 
	\item[\textit{iv}$'$.]If $(v|a\sim w|a)\in R$ for some $a, v, w\in\Aa$, then $v=w = z^a$.\end{enumerate}
	\end{lem}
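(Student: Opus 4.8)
The plan is to prove \textit{iv} directly from conditions \textit{i}, \textit{iii}a and \textit{iii}b, and then obtain \textit{iv}$'$ by the obvious dualisation (reversing arrows, and using \textit{ii}$'$, \textit{iii}$'$a, \textit{iii}$'$b in place of \textit{ii}, \textit{iii}a, \textit{iii}b). For \textit{iv} I would split the argument into two steps: first I would show that the second entry $v$ of the left-hand side must equal $z_a$, and only then deduce that $w = z_a$ as well.

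For the first step, suppose $(a|v\sim a|w)\in R$ and assume, towards a contradiction, that $v\neq z_a$. Since $a|v$ is a path we have $\source(v)=\target(a)$, so $v\in\Aa(\target(a),\Lambda)\smallsetminus\{z_a\}$, and condition \textit{iii}a applies: it produces some $b\in\Aa(\source(a),\Lambda)\smallsetminus\{a\}$ and some $w'\in\Aa$ with $(a|v\sim b|w')\in R$. The key observation is that the length-$2$ path $a|v$ now occurs in both relations $(a|v\sim a|w)$ and $(a|v\sim b|w')$; it is genuinely the left-hand entry of the second relation because $a\neq b$ forces $a|v\neq b|w'$. By condition \textit{i}, the path $a|v$ appears in \emph{at most one} relation of $R$, so the two relations must coincide as pairs. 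Comparing right-hand sides yields $a|w=b|w'$, hence $a=b$, contradicting $b\neq a$. Therefore $v=z_a$.

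For the second step the relation has become $(a|z_a\sim a|w)$, which is exactly of the shape $(a|z_a\sim b|w)$ covered by condition \textit{iii}b, with $b=a$; applying \textit{iii}b gives $w=z_a$ (and re-confirms $b=a$), so $v=w=z_a$ as claimed. The statement \textit{iv}$'$ is then verbatim the same with the roles of source and target interchanged, invoking the ``at most one relation'' clause of \textit{i} for the path $w|a$ together with \textit{iii}$'$a and \textit{iii}$'$b. I do not expect a genuine obstacle here; the only point requiring care is the bookkeeping in the first step, namely verifying that the relation supplied by \textit{iii}a is distinct from $(a|v\sim a|w)$ unless they coincide — which is precisely what the injectivity encoded in condition \textit{i} guarantees. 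Beyond that, the proof is a direct unwinding of the hypotheses.
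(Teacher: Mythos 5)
Your proof is correct and follows essentially the same route as the paper's: derive $v=z_a$ by playing condition \textit{iii}a against the ``at most one relation'' clause of condition \textit{i} to force a contradiction, then conclude $w=z_a$, with \textit{iv}$'$ obtained by duality. If anything, your second step---invoking \textit{iii}b directly on the relation $(a|z_a\sim a|w)$---is cleaner than the paper's own wording, which cites \textit{iii}a at that point although \textit{iii}b is what is actually being used.
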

	\begin{proof}
We only prove \textit{iv}, since the proof of \textit{iv}$'$ is dual. If $(a|v \sim a|w)\in R$ and $v\neq z_a$, then by \textit{iii}a there exist $b\neq a$ and $w'$ such that $a|v\sim b|w'$; but by \textit{i} the path $a|v$ can only appear in one relation, thus we would have $a|w = b|w'$, contradicting $b\neq a$. Hence $v = z_a$. 
It follows from \textit{iii}b and $(a|z_a\sim a|w)\in R$ that $w=z_a$, which implies \textit{iv}.
\end{proof}
\begin{rem}
Observe that the conditions \textit{i}--\textit{iii$'$} do not imply that relations of the form $a|z_a\sim a|z_a$ exist for all $a$.
\end{rem}

The condition \textit{ii} implies, for all $a\in \Aa$, the existence of a map \[ a\star \blank\colon \Aa(\source(a),\Lambda)\smallsetminus\set{a}\to \Aa(\target(a),\Lambda),\quad b\mapsto a\star b, \]
such that $(a|a\star b\sim b|w)\in R$ for some $w$. We moreover assume:
\begin{enumerate}
\item[\textit{v}.] The operation $\star$ satisfies, for all pairwise distinct $a,b,c\in \Aa$, $\source(a) = \source(b) = \source(c)$, the RC-law
\[ (a\star b)\star (a\star c) = (b\star a)\star (b\star c). \]
\end{enumerate}
Because of the conditions \textit{i} and \textit{iii}, the map $a\star \blank$ defined on $ \Aa(\source(a),\Lambda)\smallsetminus\set{a}$ can be extended to a bijection $a\star '\blank\colon \Aa(\source(a),\Lambda)\to \Aa(\target(a),\Lambda)$, $b\mapsto a\star'b$, defined by
\[ a\star' b = \begin{cases}
a\star b\text{ if }a\neq b,\\
z_a\text{ otherwise.}
\end{cases} \]

Dually, using \textit{i}, \textit{ii}$'$ and \textit{iii}$'$, every $a\in\Aa$ yields a map $a\bullet \blank\colon \Aa(\Lambda,\target(a))\smallsetminus\set{a}\to \Aa(\Lambda,\source(a))$
satisfying $(a\bullet b)|a\sim w|b$ for $b\in\Aa(\Lambda, \target(a))\smallsetminus\{ a\}$, which can be extended to a bijection $a\bullet '\blank \colon\Aa(\Lambda,\target(a))\to \Aa(\Lambda,\source(a))$ defined by
\[ a\bullet ' b= \begin{cases} a\bullet b\text{ if }a\neq b,\\
z^a\text{ otherwise.}\end{cases} \]

The rest of this section is devoted to proving the following result, and to giving some examples of its application. 
\begin{thm}\label{thm:converse}
Let $\Cc = \langle \Aa\mid R\rangle^+$ be a category presentation satisfying the above 
conditions \textit{i}--\textit{v}, and let $\star'$ be defined as above. Then, the map
$\sigma\colon \Aa\otimes\Aa\to \Aa\otimes\Aa$, $\sigma(a|b)= (a\rightharpoonup b)|(a\leftharpoonup b)$, defined by
\[ a\rightharpoonup b = c\text{ if and only if }b = a\star' c,\quad a\leftharpoonup b = (a\rightharpoonup b)\star' a  \]
is an involutive non-degenerate YBM on $\Aa$, whose structure category is $\Cc$.
\end{thm}

In particular, Theorem \ref{thm:converse} will imply that such a category $\Cc$ is perfect Garside, with Garside family as in Propositions \ref{prop:Deltas} and \ref{prop:Deltas1}.
\subsection{Proof of Theorem \ref{thm:converse}}
Before proving the theorem, we need some preliminary results. The first one is a converse to Proposition \ref{prop:braided_quivers_are_wRCs}.
\begin{prop}\label{prop:lnd_weakRCs_is_braiding}
Let $(\Aa, \star')$ be a left-non-degenerate weak RC-system, and let
\[ a\rightharpoonup b = c\text{ if and only if }b = a\star' c,\quad a\leftharpoonup b = (a\rightharpoonup b)\star' a. \]
Then, $\sigma\colon a|b\mapsto (a\rightharpoonup b)|(a\leftharpoonup b)$ is a YBM on $\Aa$.
\end{prop}
\begin{proof}
We check \eqref{yb1}, \eqref{yb2}, and \eqref{yb3}, for a path $a|b|c\in \Aa\otimes \Aa\otimes \Aa$, by direct computation. Let $s= (a\rightharpoonup b)\rightharpoonup((a\leftharpoonup b)\rightharpoonup c)$ and $t= a\rightharpoonup(b\rightharpoonup c)$ be the left- and right-hand side of \eqref{yb1}, respectively. Then, $ (a\leftharpoonup b)\rightharpoonup c = (a\rightharpoonup b)\star's$, thus $((a\rightharpoonup b)\star' a)\rightharpoonup c = (a\rightharpoonup b)\star' s$, whence
\[ c= ((a\rightharpoonup b)\star' a)\star' ((a\rightharpoonup b)\star' s), \]
which by the RC-law is equal to $(a\star'(a\rightharpoonup b))\star'  (a\star' s) = b\star'(a\star's)$. Therefore, $s = a\rightharpoonup (b\rightharpoonup c) =t$ as desired.

Let now $s'= (a\leftharpoonup b)\leftharpoonup c$ and $t'= (a\leftharpoonup (b\rightharpoonup c))\leftharpoonup (b\leftharpoonup c)$ be the left- and right-hand side of \eqref{yb3}, respectively. Then
\begin{align*}
	t' &= \big((a\leftharpoonup (b\rightharpoonup c))\rightharpoonup (b\leftharpoonup c)\big)\star' (a\leftharpoonup (b\rightharpoonup c))\\
	&= \Big(\big( (a\rightharpoonup (b\rightharpoonup c))\star' a \big)\rightharpoonup ((b\rightharpoonup c)\star' b)\Big)\star' \Big((a\rightharpoonup (b\rightharpoonup c))\star' a\Big).
\end{align*}
From the RC-law, one has
\begin{align*}
	(b\rightharpoonup c)\star' b&= \Big( a\star' (a\rightharpoonup (b\rightharpoonup c)) \Big)\star'(a\star' (a\rightharpoonup b))\\
	&= \big((a\rightharpoonup (b\rightharpoonup c))\star' a\big)\star'\big( (a\rightharpoonup (b\rightharpoonup c))\star' (a\rightharpoonup b)\big),
\end{align*}
whence
\[ \big((a\rightharpoonup (b\rightharpoonup c))\star'a\big)\rightharpoonup ((b\rightharpoonup c)\star'b)  = (a\rightharpoonup (b\rightharpoonup c))\star'(a\rightharpoonup b), \]
and consequently
\begin{align*}t'&= \Big(\big( (a\rightharpoonup (b\rightharpoonup c))\star' a \big)\rightharpoonup ((b\rightharpoonup c)\star' b)\Big)\star' \Big((a\rightharpoonup (b\rightharpoonup c))\star' a\Big) \\
	&= \Big((a\rightharpoonup (b\rightharpoonup c))\star'(a\rightharpoonup b)\Big)\star' \Big((a\rightharpoonup (b\rightharpoonup c))\star'a\Big)\\
	&= \big( (a\rightharpoonup b)\star'(a\rightharpoonup(b\rightharpoonup c))\big)\star' ((a\rightharpoonup b)\star'a).\end{align*}
From the proof of \eqref{yb1}, we had
\begin{equation*}((a\rightharpoonup b)\star'a)\rightharpoonup c = (a\rightharpoonup b)\star' s = (a\rightharpoonup b)\star'(a\rightharpoonup (b\rightharpoonup c)), \end{equation*}
which we can plug into the previous expression of $t'$, thus getting
\begin{align*}
	t'&= \Big(((a\rightharpoonup b)\star'a)\rightharpoonup c\Big)\star'((a\rightharpoonup b)\star'a)\\
	&= ((a\rightharpoonup b)\star'a)\leftharpoonup c\\
	&= (a\leftharpoonup b)\leftharpoonup c\\
	&= s',
\end{align*}
and this concludes the proof of \eqref{yb3}. 

Finally, we let $s''= (a\rightharpoonup b)\leftharpoonup ((a\leftharpoonup b)\rightharpoonup c)$ and $t'' =(a\leftharpoonup (b\rightharpoonup c))\rightharpoonup (b\leftharpoonup c)$ be the left- and right-hand side of \eqref{yb2}, respectively.
Then, by using \eqref{yb1},
\begin{align*}
	s'' &= \big((a\rightharpoonup b)\rightharpoonup ((a\leftharpoonup b)\rightharpoonup c)\big)\star'(a\rightharpoonup b)\\
	&= (a\rightharpoonup(b\rightharpoonup c))\star'(a\rightharpoonup b),\\
	t''&= \big((a\rightharpoonup(b\rightharpoonup c))\star'a\big)\rightharpoonup ((b\rightharpoonup c)\star'b),\\
\end{align*}
and from the RC-law
\begin{align*}&\big((a\rightharpoonup (b\rightharpoonup c))\star' a\big)\star' \big((a\rightharpoonup (b\rightharpoonup c))\star'(a\rightharpoonup b)\big)\\
	&= \big(a\star' (a\rightharpoonup (b\rightharpoonup c))\big)\star'(a\star'(a\rightharpoonup b))\\
	&= (b\rightharpoonup c)\star'b.\end{align*}
Thus we obtain
\begin{align*}
	t''&= ((a\rightharpoonup (b\rightharpoonup c))\star'a)\rightharpoonup ((b\rightharpoonup c)\star'b)\\
	&= (a\rightharpoonup (b\rightharpoonup c))\star' (a\rightharpoonup b)\\
	&= s'',
\end{align*}
as desired.
\end{proof}
We now prove the first part of Theorem \ref{thm:converse}: namely, that $\sigma$ is an involutive left-non-degenerate YBM. 

It is clear that $\sigma$ would be left-non-degenerate, because $a\star'\blank$ is bijective for all $a\in\Aa$. Involutivity is also easy (using \eqref{i1} and \eqref{i2}): $(a\rightharpoonup b)\rightharpoonup (a\leftharpoonup b)$ is by definition the unique $c$ such that $(a\rightharpoonup b)\star' c = a\leftharpoonup b = (a\rightharpoonup b)\star'a$, whence $c=a$. From \eqref{i1}, one also obtains \eqref{i2}: because $(a\rightharpoonup b)\rightharpoonup(a\leftharpoonup b)=a$, we get $(a\rightharpoonup b)\leftharpoonup(a\leftharpoonup b)=((a\rightharpoonup b)\rightharpoonup(a\leftharpoonup b))\star'(a\rightharpoonup b)=a\star'(a\rightharpoonup b)=b$.

We now need to prove the YBE for $\sigma$: we do so by proving that $(\Aa,\star')$ is a left-non-degenerate weak RC-system, thus $\sigma$ is a YBM by Proposition \ref{prop:lnd_weakRCs_is_braiding}. We first need a couple of lemmas.
\begin{lem}\label{lem:xxxx}
In the hypotheses of Theorem \ref{thm:converse}, one has $(a\star' b)\bullet' (b\star' a) = a$ for all $a,b\in\Aa$, $\source(a) = \source(b)$.
\end{lem}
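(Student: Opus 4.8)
The plan is to prove the identity $(a\star' b)\bullet'(b\star' a)=a$ in two cases, $a\neq b$ and $a=b$, exploiting the fact that the single relation attached to the unordered pair $\{a,b\}$ can be read both as a $\star'$-square and as a $\bullet'$-square. Writing $p:=a\star' b$ and $q:=b\star' a$, the guiding observation is that $p$ and $q$ share the same target (the bottom-right corner of the $\star'$-square on $(a,b)$), so that $p\bullet' q$ is even defined; geometrically, the claim $p\bullet' q=a$ just says that reversing the orientation of the $\star'$-square recovers $a$ on the opposite edge, which is the presentation-level shadow of involutivity.

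For the generic case $a\neq b$ I would first invoke condition \textit{ii} to produce the unique relation $a|(a\star b)\sim b|(b\star a)$ in $R$, so that $p=a\star' b=a\star b$ and $q=b\star' a=b\star a$. Next I would check that $p\neq q$: if $p=q$ the relation reads $a|p\sim b|p$, and condition \textit{iv}$'$ (applied to the arrow $p$) forces $a=b=z^{p}$, contradicting $a\neq b$. Since now $p\neq q$ and $\target(p)=\target(q)$, condition \textit{ii}$'$ supplies a \emph{unique} relation of the form $v|p\sim w|q$; but $a|p\sim b|q$ is already such a relation, so by uniqueness it must be this one. Reading off the front arrow through the defining property of $\bullet$, namely $(p\bullet q)|p\sim w|q$, we conclude $p\bullet' q=p\bullet q=a$, as wanted.

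The case $a=b$ requires a separate, slightly more delicate argument. Here $p=q=a\star' a=z_a$, so $(a\star' b)\bullet'(b\star' a)=z_a\bullet' z_a=z^{z_a}$ directly from the definition of $\bullet'$ on a repeated argument, and it remains to prove $z^{z_a}=a$. Since $\source(z_a)=\target(a)$, the element $z^{z_a}$ lies in $\Aa(\Lambda,\target(a))$, which is exactly where $a$ lives, so $a$ is an eligible candidate. By conditions \textit{iii}$'$a and \textit{iii}$'$b applied to the arrow $z_a$, the element $z^{z_a}$ is characterised as the unique arrow over $z_a$ admitting \emph{no} nontrivial relation $v|z_a\sim w|b$ with $b\neq z_a$. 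But condition \textit{iii}b for $a$ says precisely that any relation with left-hand side $a|z_a$ is trivial, i.e.\@ of the form $a|z_a\sim a|z_a$; hence $a$ admits no such nontrivial relation, and uniqueness yields $z^{z_a}=a$.

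The main obstacle I anticipate is not the formal manipulation of the relations but the careful bookkeeping of sources and targets needed to see that $p$ and $q$ share a target (so that $p\bullet' q$ is defined) and to match the one available relation against the defining forms in \textit{ii} and \textit{ii}$'$. The only genuinely non-routine point is the diagonal identity $z^{z_a}=a$, which expresses that the ``diagonal data'' $a\mapsto z_a$ and $e\mapsto z^{e}$ are mutually inverse; this is where the hypotheses \textit{iii} and \textit{iii}$'$ must be used in tandem rather than formally translated.
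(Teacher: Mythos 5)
Your proof is correct and follows essentially the same route as the paper: the same split into the cases $a\neq b$ and $a=b$, with condition \textit{iv}$'$ ruling out $a\star b=b\star a$ and condition \textit{ii}$'$ identifying $(a\star b)\bullet(b\star a)$ with $a$. In the case $a=b$, where the paper invokes the bijectivity of $z_a\bullet'\blank$ together with condition \textit{iii}b, you instead read off the required uniqueness directly from conditions \textit{iii}$'$a--\textit{iii}$'$b; this is only a repackaging of the same argument, since that bijectivity is itself extracted from \textit{iii}$'$.
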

\begin{proof}
We first suppose $a = b$. Because of the conditions \textit{iii} and \textit{iii}$'$, 
there is no $b\in\Aa(\Lambda,\target(z_a))\smallsetminus\set{z_a}$ such that $z_a\bullet b = a$. Because $z_a\bullet'\blank\colon \Aa(\Lambda,\target(z_a))\to \Aa(\Lambda,\source(z_a))$ is bijective, this implies $z_a\bullet 'z_a = a$. Since $z_a = a\star' a$, we get the desired formula.

We now prove the formula when $a\neq b$. In this case, $(a\star' b)\bullet' (b\star' a) =(a\star b)\bullet' (b\star a) $. From the condition \textit{iv}$'$, one has $a\star b\neq b\star a$, and hence $(a\star b)\bullet' (b\star a)  = (a\star b)\bullet (b\star a) $, which equals $a$ by condition \textit{ii}$'$.
\end{proof}

The proof of Lemma \ref{lem:xxxx} never uses the condition \textit{v}. Since the set of axioms \textit{i}--\textit{iii}$'$ is self-dual, the following result is also true:
\begin{lem}[dual of Lemma \ref{lem:xxxx}]\label{lem:dualxxxx}
In the hypotheses of Theorem \ref{thm:converse}, one has $(a\bullet 'b)\star' (b\bullet' a)= a$ for all $a,b\in A$, $\target(a) = \target(b)$.
\end{lem}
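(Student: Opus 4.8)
The plan is to avoid any fresh computation and instead deduce the identity directly from Lemma \ref{lem:xxxx} by a formal arrow-reversal duality, exploiting the fact that the axiom block \textit{i}--\textit{iii}$'$ is stable under reversing all arrows. Concretely, I would pass from the presentation $\Cc = \langle \Aa\mid R\rangle^+$ to the opposite quiver $\bar\Aa$ equipped with the reversed relations $\bar R$, under which a quadratic relation $a|v\sim b|w$ in $R$ corresponds to $\bar v|\bar a\sim \bar w|\bar b$ in $\bar R$ (composability is preserved because $\source_{\bar\Aa}(\bar a)=\target_\Aa(a)$ and $\target_{\bar\Aa}(\bar a)=\source_\Aa(a)$). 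The first task is to verify, axiom by axiom, that $(\bar\Aa,\bar R)$ again satisfies \textit{i}--\textit{iii}$'$: condition \textit{i} is manifestly self-dual; reversing arrows turns a relation witnessing \textit{ii} for arrows $a,b$ with a common source into one witnessing \textit{ii}$'$ for $\bar a,\bar b$ with a common target (and conversely), so the pair $\{\textit{ii},\textit{ii}'\}$ is preserved; and likewise $\{\textit{iii},\textit{iii}'\}$ is preserved, with the distinguished arrow $z_a$ of $\Aa$ matching the distinguished arrow $z^{\bar a}$ of $\bar\Aa$.

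Next I would record how the two operations transform under reversal. Unwinding the defining relation $a|(a\star' b)\sim b|w$ of $\star'$ and the dual relation $(a\bullet' b)|a\sim w|b$ of $\bullet'$, arrow-reversal gives
$$\bar a\,\bullet'_{\bar\Aa}\,\bar b = \overline{a\star'_\Aa b},\qquad \bar a\,\star'_{\bar\Aa}\,\bar b = \overline{a\bullet'_\Aa b},$$
valid on every pair where the left-hand operation is defined, the degenerate cases $a=b$ reducing to the identification $z_a\leftrightarrow z^{\bar a}$ established above. The crucial point, already flagged in the text, is that the proof of Lemma \ref{lem:xxxx} uses only \textit{i}--\textit{iii}$'$ and never condition \textit{v}; hence Lemma \ref{lem:xxxx} is available for the pair $(\bar\Aa,\bar R)$, which I have just checked satisfies exactly those axioms.

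Finally I would apply Lemma \ref{lem:xxxx} inside $\bar\Aa$ to arrows $\bar a,\bar b$ with $\source_{\bar\Aa}(\bar a)=\source_{\bar\Aa}(\bar b)$, that is, to $a,b\in\Aa$ with $\target_\Aa(a)=\target_\Aa(b)$, obtaining $(\bar a\star'_{\bar\Aa}\bar b)\bullet'_{\bar\Aa}(\bar b\star'_{\bar\Aa}\bar a)=\bar a$. Substituting the two translation formulas, the left-hand side becomes $(\overline{a\bullet'_\Aa b})\,\bullet'_{\bar\Aa}\,(\overline{b\bullet'_\Aa a})=\overline{(a\bullet'_\Aa b)\star'_\Aa(b\bullet'_\Aa a)}$, and removing the bars yields precisely $(a\bullet' b)\star'(b\bullet' a)=a$, which is the asserted statement. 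I expect the only genuinely error-prone step to be the bookkeeping in this duality dictionary: one must confirm that reversal simultaneously interchanges sources with targets, $\star'$ with $\bullet'$, and $z_a$ with $z^a$ in a mutually consistent manner, and in particular that the degenerate $a=b$ cases line up correctly; everything past that is purely formal, as no new calculation is performed beyond the content of Lemma \ref{lem:xxxx} itself.
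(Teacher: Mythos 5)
Your proposal is correct and takes essentially the same route as the paper: the paper's entire proof of this lemma is the observation that the proof of Lemma \ref{lem:xxxx} never uses condition \textit{v} and that the axiom set \textit{i}--\textit{iii}$'$ is self-dual under arrow reversal, which is exactly the duality argument you carry out. The only difference is that you make the implicit bookkeeping explicit (the exchange of \textit{ii} with \textit{ii}$'$ and \textit{iii} with \textit{iii}$'$, the identification $z_{\bar a}=\overline{z^{a}}$, and the dictionary $\bar a\,\star'_{\bar\Aa}\,\bar b=\overline{a\bullet'_{\Aa}b}$, $\bar a\,\bullet'_{\bar\Aa}\,\bar b=\overline{a\star'_{\Aa}b}$), all of which checks out.
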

\begin{cor}\label{cor:bab}
One has $b\bullet' (a\leftharpoonup b) = a$ for all $a,b\in \Aa$, $\target(a) = \source(b)$.
\end{cor}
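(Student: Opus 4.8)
The plan is to recognise this corollary as nothing more than Lemma \ref{lem:xxxx} rewritten in terms of the operations $\rightharpoonup$ and $\leftharpoonup$, after performing the substitution $c := a\rightharpoonup b$. The first step I would take is to pin down the source of $c$. By the definition in Theorem \ref{thm:converse}, $c = a\rightharpoonup b$ is the unique element satisfying $b = a\star' c$; since $a\star'\blank$ is a bijection $\Aa(\source(a),\Lambda)\to\Aa(\target(a),\Lambda)$ and the hypothesis $\target(a)=\source(b)$ says precisely that $b\in\Aa(\target(a),\Lambda)$, its preimage $c$ automatically lies in $\Aa(\source(a),\Lambda)$. Hence $\source(c)=\source(a)$, which is exactly the admissibility condition needed to form $\star'$-products and to invoke Lemma \ref{lem:xxxx} on the pair $(a,c)$. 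This source bookkeeping is the only point requiring a moment's care, and it is immediate from the bijectivity of $\star'$ rather than from any weak RC-system structure (which has not yet been established at this point in the argument).

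Next I would simply unwind the two relevant definitions. On the one hand $a\star' c = b$ holds by the very choice of $c$; on the other hand the definition $a\leftharpoonup b := (a\rightharpoonup b)\star' a$ gives $a\leftharpoonup b = c\star' a$. Applying Lemma \ref{lem:xxxx} to the pair $(a,c)$, whose statement reads
\[ (a\star' c)\bullet'(c\star' a) = a, \]
and substituting the two identities above, I obtain
\[ b\bullet'(a\leftharpoonup b) = (a\star' c)\bullet'(c\star' a) = a, \]
which is the claim. Note that the definedness of the left-hand side---in particular the equality $\target(a\leftharpoonup b)=\target(b)$ required for $\bullet'$ to apply---is inherited directly from Lemma \ref{lem:xxxx}, so no separate verification is necessary.

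I do not expect any genuine obstacle here: the only thing one might worry about is whether a case split according to $a=c$ versus $a\neq c$ (equivalently, the diagonal versus off-diagonal instance of $\star'$) is needed, but since the whole argument merely quotes Lemma \ref{lem:xxxx}, and that lemma already handles both cases internally (the coincident case via conditions \textit{iii} and \textit{iii}$'$, the distinct case via \textit{ii}$'$ and \textit{iv}$'$), the present corollary carries no difficulty of its own. In effect it is a restatement of Lemma \ref{lem:xxxx} obtained by transporting the pair $(a,c)$ back through the bijection defining $\rightharpoonup$.
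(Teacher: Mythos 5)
Your proposal is correct and is essentially identical to the paper's own proof: both set $c = a\rightharpoonup b$, unwind the definitions $b = a\star' c$ and $a\leftharpoonup b = c\star' a$, and apply Lemma \ref{lem:xxxx} to the pair $(a,c)$. The extra bookkeeping you include (that $\source(c)=\source(a)$ via the bijectivity of $a\star'\blank$) is implicit in the paper's computation but harmless to spell out.
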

\begin{proof}
Using Lemma \ref{lem:xxxx}, we compute:
\begin{align*}
	b\bullet' (a\leftharpoonup b) &= b\bullet' ((a\rightharpoonup b)\star 'a)\\
	&= (a\star' (a\rightharpoonup b)) \bullet' ((a\rightharpoonup b)\star 'a)\\
	&= a.
\end{align*}
\end{proof}
\begin{lem}\label{lem:z_star}
In the hypotheses of Theorem \ref{thm:converse}, $z_{a\star b} = z_{b\star a}$ implies $a\star b = b\star a$.
\end{lem}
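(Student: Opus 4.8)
The plan is to obtain the statement as an immediate consequence of Lemma \ref{lem:xxxx}, specialised to the diagonal. The guiding observation is that the assignment $c\mapsto z_c$ is injective, because it admits an explicit left inverse built from $\bullet'$; once this is established, the lemma follows at once.

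First I would record the diagonal instance of Lemma \ref{lem:xxxx}. Taking both arguments of that lemma to be a single element $c\in\Aa$ (the hypothesis $\source(c)=\source(c)$ being automatic), one gets $(c\star' c)\bullet'(c\star' c)=c$. Now $c\star' c=z_c$ by the definition of the extension $\star'$ on the diagonal, and $z_c\bullet' z_c=z^{z_c}$ by the definition of $\bullet'$ on coincident arguments; hence the identity reads $z_c\bullet' z_c=c$ for every $c\in\Aa$. In other words, the map $c\mapsto z_c$ admits the retraction $\zeta\mapsto \zeta\bullet'\zeta$, and is therefore injective. I would then apply this twice, once with $c=a\star b$ and once with $c=b\star a$: both of these are honest elements of $\Aa$, so $z_{a\star b}$ and $z_{b\star a}$ are well defined, and since $\star'$ agrees with $\star$ off the diagonal there is no clash of notation. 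This yields $z_{a\star b}\bullet' z_{a\star b}=a\star b$ and $z_{b\star a}\bullet' z_{b\star a}=b\star a$. Under the hypothesis $z_{a\star b}=z_{b\star a}$ the two left-hand sides coincide verbatim, whence $a\star b=b\star a$, as claimed.

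There is essentially no computational obstacle here; the only points deserving care are bookkeeping ones. I would check that the diagonal specialisation of Lemma \ref{lem:xxxx} is legitimate --- the source condition is vacuous and $\bullet'$ is defined on equal arguments, returning the relevant $z^{(\cdot)}$ --- and that the sources and targets line up so that $z_{a\star b}$ and $z_{b\star a}$ are comparable arrows in the first place. The latter holds because $z_{a\star b}$ and $z_{b\star a}$ share the source $\target(a\star b)=\target(b\star a)$, which already follows from the relation $a|(a\star b)\sim b|(b\star a)$ furnished by condition \textit{ii}. With these verifications in place, the proof reduces to the injectivity of $c\mapsto z_c$ recorded above.
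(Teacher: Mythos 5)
Your proof is correct and is essentially the paper's own argument: the paper likewise applies Lemma \ref{lem:xxxx} on the diagonal (yielding $z_c\bullet' z_c = c$, i.e.\ injectivity of $c\mapsto z_c$) to the elements $c=a\star b$ and $c=b\star a$, and concludes from the hypothesis $z_{a\star b}=z_{b\star a}$. The only cosmetic difference is that the paper writes this as a single chain of equalities rather than isolating the injectivity statement.
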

\begin{proof}
One has
\begin{align*}
	a\star b\overset{(\dagger)}&{=} ((a\star b)\star ' (a\star b)) \bullet ' ((a\star b)\star ' (a\star b))\\
	&= z_{a\star b}\bullet ' z_{a\star b}\\
	&= z_{b\star a}\bullet 'z_{b\star a}\\
	&= ((b\star a)\star' (b\star a))\bullet' ((b\star a)\star'(b\star a))\\
	\overset{(\dagger)}&{=} b\star a,
\end{align*}
where we use Lemma \ref{lem:xxxx} in each equality marked with $(\dagger)$.
\end{proof}
\begin{lem}\label{lem:two_ways}
One has $(a\star b)\star z_a = z_{b\star a}$ for all $a\neq b$ with the same source.
\end{lem}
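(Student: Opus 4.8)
The plan is to read the claimed identity as the \emph{degenerate instance} $x=z=a$, $y=b$ of the RC-law: the left-hand side of the RC-law at $(a,b,a)$ is $(a\star' b)\star'(a\star' a)=(a\star b)\star z_a$, while the right-hand side is $(b\star' a)\star'(b\star' a)=z_{b\star a}$. Condition \textit{v} only gives the RC-law for \emph{pairwise distinct} triples, so the whole point is to recover this coincidence-of-arguments case from the distinct one. To this end I would freeze $a,b$ and let the third slot vary: set
\[
L(c):=(a\star b)\star'(a\star' c),\qquad R(c):=(b\star a)\star'(b\star' c),\qquad c\in\Aa(\source(a),\Lambda).
\]
First I would observe that $L$ and $R$ are both \emph{bijections} $\Aa(\source(a),\Lambda)\to\Aa(\target(a\star b),\Lambda)$, being composites of the bijections $a\star'\blank$, $(a\star b)\star'\blank$ (resp.\ $b\star'\blank$, $(b\star a)\star'\blank$); here I use that $\target(a\star b)=\target(b\star a)$, which is immediate since the unique relation of condition \textit{ii} is $a|(a\star b)\sim b|(b\star a)$.

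Next I would check that $L$ and $R$ \emph{agree off} $\set{a,b}$. For $c\neq a,b$ the triple $a,b,c$ is pairwise distinct, and by injectivity of $a\star'\blank$ and $b\star'\blank$ all the $\star'$ occurring reduce to genuine $\star$; condition \textit{v} (the RC-law) then gives $L(c)=(a\star b)\star(a\star c)=(b\star a)\star(b\star c)=R(c)$. Two bijections onto the same target set that agree on the complement of $\set{a,b}$ must have equal images of $\set{a,b}$, so $\set{L(a),L(b)}=\set{R(a),R(b)}$. Computing the four values via $x\star' x=z_x$ (and noting $a\star b\neq z_a$, $b\star a\neq z_b$, again by injectivity of $a\star'\blank$, $b\star'\blank$, so these are genuine $\star$) yields
\[
\set{(a\star b)\star z_a,\; z_{a\star b}}=\set{z_{b\star a},\; (b\star a)\star z_b}.
\]
Since $L$ and $R$ are injective, both sets have two distinct elements, so the matching is either aligned, giving the desired $(a\star b)\star z_a=z_{b\star a}$, or crossed.

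Finally I would rule out the crossed matching. The crossed case forces $z_{a\star b}=z_{b\star a}$, whence $a\star b=b\star a$ by Lemma \ref{lem:z_star}. But $a\neq b$ (same source) implies $a\star b\neq b\star a$: if $a\star' b=b\star' a=:p$, then Lemma \ref{lem:xxxx} applied to the pairs $(a,b)$ and $(b,a)$ gives $a=p\bullet' p=b$, a contradiction. Hence the crossed case is impossible and the aligned identity $(a\star b)\star z_a=z_{b\star a}$ holds. The only genuinely delicate points are the bijectivity/complement argument (which is where the discreteness is exploited, replacing a naive ``substitute $c=a$'' in the RC-law) and the bookkeeping that each $\star'$ collapses to $\star$ exactly when the arguments are distinct; the elimination of the crossed case is then a short consequence of Lemmas \ref{lem:z_star} and \ref{lem:xxxx}.
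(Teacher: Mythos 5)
Your proof is correct and is essentially the paper's own argument: the paper likewise describes $\Aa(\target(a\star b),\Lambda)$ in two ways as the images of the composite bijections you call $L$ and $R$, uses condition \textit{v} to identify their values at $c\notin\{a,b\}$, reduces to the same two possible matchings of the residual two-element sets, and eliminates the bad matching via Lemma \ref{lem:z_star}. The only cosmetic difference is the very last step: where you rule out $a\star b=b\star a$ (for $a\neq b$) by applying Lemma \ref{lem:xxxx} to both orderings of the pair, the paper instead observes that $a\star b=b\star a$ would place the relation $a|(a\star b)\sim b|(a\star b)$ in $R$ and appeals directly to condition \textit{iv}$'$.
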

\begin{proof}
From the condition \textit{ii} one has $\Aa(\target(a\star b), \Lambda) = \Aa(\target(b\star a),\Lambda)$, hence this set can be described in two equivalent ways:
\begin{align*}
	\Aa(\target(a\star b),\Lambda) &= \Big\lbrace  (a\star b)\star (a\star c),\; (a\star b)\star z_a \; \Big|\; c\in \Aa(\source(a),\Lambda)\smallsetminus\set{a,b} \Big\rbrace\sqcup \set{z_{a\star b}}\\
	= \Aa(\target(b\star a),\Lambda)&= \Big\lbrace  (b\star a)\star (b\star c),\; (b\star a)\star z_b \; \Big|\; c\in \Aa(\source(a),\Lambda)\smallsetminus\set{a,b} \Big\rbrace\sqcup \set{z_{b\star a}}.
\end{align*}
Now, the condition \textit{v} implies $(b\star a)\star (b\star c) = (a\star b)\star (a\star c)$; thus, for the two above sets to be equal, one either has
\begin{equation}\label{eq:case1}
	(a\star b)\star z_a = (b\star a)\star z_b\text{ and }z_{a\star b} =z_{b\star a},
\end{equation}
or 
\begin{equation}\label{eq:case2}
	(a\star b)\star z_a = z_{b\star a}\text{ and }z_{a\star b} = (b\star a)\star z_{b}.
\end{equation}
The case \eqref{eq:case1} cannot occur: from Lemma \ref{lem:z_star}, if $z_{a\star b} = z_{b\star a}$ then $a\star b = b\star a$, but then the relation $(a|(a\star b)\sim b|(b\star a)) = (a|(a\star b)\sim b|(a\star b))$ lies in $R$, which by \textit{iv}$'$ would imply $a=b$, which is a contradiction. Therefore, one must have \eqref{eq:case2}, the desired conclusion.
\end{proof}
We now prove that $(\Aa,\star')$ is a left-non-degenerate weak RC-system. The RC-law
\[ (a\star' b)\star' (a\star' c) = (b\star' a)\star' (b\star' c),\]
for $a,b,c$ pairwise distinct, follows from the RC-law for $\star$. If $a=b$, then the RC-law holds trivially. If $a=c\neq b$, then
\[ (a\star'b)\star'(a\star'c) = (a\star b)\star'(a\star'a) = (a\star b)\star' z_a,\]
while, on the other hand,
\[ (b\star' a)\star' (b\star' c) = (b\star a)\star' (b\star a) = z_{b\star a}, \]
and these two are equal from Lemma \ref{lem:two_ways}.

Finally, in case $b=c\neq a$, one has again from Lemma \ref{lem:two_ways}
\begin{align*}
(a\star' b)\star '(a\star'c)&= (a\star b)\star'(a\star b)\\
&= z_{a\star b}\\
&= (b\star a)\star' z_b\\
&= (b\star a)\star' (b\star'b)\\
&= (b\star' a)\star'(b\star' c),
\end{align*}
as desired. Thus $(\Aa,\star')$ is a weak RC-system, and it is clearly left-non-degenerate. This concludes the proof that $\sigma$ is an involutive left-non-degenerate YBM on $\Aa$. 

We now observe that $\sigma$ is also right-non-degenerate. 
Indeed, suppose that $a,a',b\in \Aa$ ($\target(a)=\target(a')=\source(b)$) satisfy
$a\leftharpoonup b = a'\leftharpoonup b$. Then
\[ a = b\bullet' (a\leftharpoonup b) = b\bullet' (a'\leftharpoonup b) = a', \]
from Corollary \ref{cor:bab}. This proves that $\blank\leftharpoonup b\colon \Aa(\Lambda,\source(b))\to \Aa(\Lambda,\target(b))$ is injective. We now show the surjectivity. It follows from the definition of $\rightharpoonup$ and from Lemma \ref{lem:dualxxxx} that
$(b\bullet' v)\rightharpoonup b = v\bullet' b$ for any  $v\in \Aa(\Lambda,\target(b))$, and 
\[ (b\bullet' v)\leftharpoonup b = (v\bullet ' b)\star' (b\bullet' v)= v \]
as a consequence of Lemma \ref{lem:dualxxxx}. Thus
$v$ is in the image of $\blank\leftharpoonup b$. This concludes the proof that $\sigma$ is non-degenerate.

We finally prove that $\Cc$ is the structure category of $\sigma$. We denote by $\bar{R}$ the set of all relations of the form $a|v\sim a|v$ in $R$, and write
$R'=\{ a|b\sim\sigma(a|b)\mid a|b\in\Path_2(\Aa)\}$, which is the relation that defines $\Cc(\sigma)$.
We denote by $\bar{R}'$ the subset of $R'$ consisting of trivial relations of the form $a|b\sim a|b$. 
Observe that $\bar{R}'$ may in principle be larger than $\bar{R}$. If $R$ includes relations of the form $a|z_a \sim a|z_a$ for all $a\in\Aa$, then $\bar{R} = \bar{R}'$.

We can show $(R\smallsetminus\bar{R})\subset R'$. Indeed, if $(a|v\sim a|w)\in R$, then the condition \textit{iv} induces $v=w$, and thus $(a|v\sim a|w)\in \bar{R}$.
If $(a|v\sim b|w)\in R$ $(a\ne b)$, then $v=a\star b$ and $w=b\star a$. Because $\sigma(a|a\star b)=b|b\star a$, $(a|v\sim b|w)\in R'$.
Similarly, $(R'\smallsetminus\bar{R'})\subset R\cup R^{op}$.

It is easy to see that the congruence generated by $R\smallsetminus\bar{R}$ is the same as that generated by $R$ and that the congruence generated by $R'\smallsetminus\bar{R'}$ is the same as that generated by $R'$, because the relations in $\bar{R}$ and $\bar{R'}$ have no effect on the congruences at all
(see Remark \ref{rem:congruence}).
Furthermore, the congruence generated by $R\cup R^{op}$ is the same as that generated by $R$.
Since $(R\smallsetminus\bar{R})\subset R'$ and $(R'\smallsetminus\bar{R'})\subset R\cup R^{op}$, the congruence generated by $R$ is exactly that generated by $R'$, and consequently $\Cc=\Cc(\sigma)$; thus concluding the proof of Theorem \ref{thm:converse}.

\begin{rem}
Notice that $\sigma$ can equivalently be defined  by means of the operation $\bullet'$, as $\sigma((v\bullet' w)|v)= (w\bullet' v)|w$; and the two definitions of $\sigma$ coincide by Lemma \ref{lem:dualxxxx}. This proves that $(\Aa,\bullet')$ is a weak co-RC-system (see Proposition \ref{prop:braided_quivers_are_w_co-RCs}).
\end{rem}
\begin{rem}
For any involutive non-degenerate YBM $\sigma$ on a quiver $\Aa$, 
we consider the structure category $\Cc(\sigma)$ with its usual presentation
$\langle\Aa\mid R\rangle^+$.
The set of relations $R\smallsetminus(R\cap R^{op})$,
from which the redundant relations have been removed,
satisfies the conditions \textit{i}--\textit{iii}$'$ and \textit{v},
and the presented category $\langle\Aa\mid R\smallsetminus(R\cap R^{op})\rangle^+$ coincides with 
$\Cc(\sigma)$.
In addition, the involutive non-degenerate YBM on the quiver $\Aa$ produced by
$\Cc(\sigma)$ in this section is exactly $\sigma$.
\end{rem}

\section{On the Garsideness of the structure category of YBMs}\label{sec:Garside}
\noindent We investigate here the main focus of this paper. Namely, we establish a connection between involutive non-degenerate YBMs, and Garside groupoids. The proof of our result differs from the proof of Chouraqui's theorem, in numerous details. However, the outline of the proof is similar. We shall establish our result by passing through the structure categories of some weak RC-systems.

Our starting point is Proposition \ref{prop:braided_quivers_are_wRCs}, saying that an involutive left-non-degenerate YBM 
$\sigma$ on a quiver $\Aa$ also provides a left-non-degenerate weak RC-system $(\Aa,\star)$. This weak RC-system need not be unital, nor even have a unit family. Indeed, the existence of a unit family requires \textit{at least} that every vertex of the quiver has a loop; and this need not be true in general
(see examples in \S\ref{sec:examples}).

We would like to take the left-non-degenerate weak RC-system $(\Aa,\star)$ 
in Proposition \ref{prop:braided_quivers_are_wRCs}, and consider its completion $(\hat{\Aa},\hat{\star})$.
\begin{rem}\label{rem:Ahat-satisfies-add-cond}
Notice that $\hat{\Aa}$ satisfies the additional condition \eqref{eqn:add_cond}. Indeed, if $x,y$ lie in $\Aa =\hat{\Aa}\smallsetminus\mathcal{E}$ and $x\neq y$, then $x\;\hat{\star}\;y=x\star y$ is defined as $(x\rightharpoonup\blank)^{-1}(y)$ which is an element of $\Aa$, and hence lies in $\hat{\Aa}\smallsetminus\mathcal{E}$. 
\end{rem}
As we have seen in the construction of $Q^\sharp$ and $\hat{Q}$, the completion is not an extension: taking the completion $\hat{Q}$ modifies the operation $\star$, hence possibly modifies the YBM.

The actual scenario is even worse. If $(\Aa,\star)$ is a weak RC-system obtained by a YBM $\sigma$ on $\Aa$, the weak RC-system $(\hat{\Aa},\hat{\star})$ need not be associated with any quiver-theoretic YBM. In the following remark, we are going to see that in \textit{almost all} cases, there is no possible non-degenerate YBM $\hat{\sigma}\colon \hat{\Aa}\otimes\hat{\Aa}\to \hat{\Aa}\otimes \hat{\Aa}$ that can induce $\hat{\star}$.
\begin{rem}Suppose that the 
involutive left-non-degenerate
YBM $\sigma$ is given by $\sigma(x,y)$ $= (x\rightharpoonup y, x\leftharpoonup y)$, as before. We define $x\star y= (x\rightharpoonup\blank)^{-1}(y)$. Then we consider the completion $(\hat{\Aa},\hat{\star})$ and search for an operation $\hat{\rightharpoonup}$, defined on $\hat{\Aa}$, such that $x\;\hat{\star}\; y = (x\;\hat{\rightharpoonup}\;\blank)^{-1}(y)$.

What properties should this $\hat{\rightharpoonup}$ satisfy? The relations $x\;\hat{\star}\;\epsilon_{\source(x)} =\epsilon_{\target(x)}$ and $x\;\hat{\star}\; x=\epsilon_{\target(x)}$
yield, for $\hat{\rightharpoonup}$, the relations $x\,\hat{\rightharpoonup}\,\epsilon_{\target(x)} = \epsilon_{\source(x)}$ and $x\,\hat{\rightharpoonup}\, \epsilon_{\target(x)} = x$.	It is apparent that these two relations are, in most of cases, inconsistent. Since they must hold for all $x$, they imply $x = \epsilon_{\source(x)}$ for all $x$. In other words: $\Aa$ has no arrows at all, and $\hat{\Aa}$ consists of one loop $\epsilon_\lambda$ for each vertex $\lambda$.
\end{rem}
Although the weak RC-system $(\hat{\Aa},\hat{\star})$ is generally not induced by a YBM, this completion is useful to describe the structure category.
\begin{lem}\label{lemma:iso_struct_cat}
Let $\sigma$ be an involutive left-non-degenerate YBM on a quiver $\Aa$.
The structure category $\Cc(\hat{\Aa})$ of $\hat{\Aa}$ 
in Definition \ref{def:structure-category-of-wRCs} is isomorphic to the structure category $\Cc(\sigma)$ of $\sigma$ 
in Definition \ref{defin:struct-grp}.
\end{lem}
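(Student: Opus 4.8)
The plan is to realise both categories as quotients of the single path category $\Path(\Aa)$ and to show that the two defining congruences coincide. First I would pass from the left-non-degenerate weak RC-system $(\Aa,\star)$ of Proposition \ref{prop:braided_quivers_are_wRCs} to its completion $(\hat{\Aa},\hat{\star})$. By Lemma \ref{lem:extension-is-weakRCs} this is a \emph{unital} weak RC-system, with unit family $\mathcal{E}=\set{\epsilon_\lambda}_{\lambda\in\Obj(\Aa)}$ given by the inserted loops, and by Remark \ref{rem:Ahat-satisfies-add-cond} it satisfies the additional condition \eqref{eqn:add_cond}. Hence Proposition \ref{prop:Garside-family} applies and yields the presentation \eqref{presentation} of $\Cc(\hat{\Aa})$. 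Since the completion adjoins loops $\epsilon_\lambda\notin\Aa$, we have $\hat{\Aa}\smallsetminus\mathcal{E}=\Aa$, so this presentation reads
\[
\Cc(\hat{\Aa}) = \Big\langle \Aa \;\Big|\; x|(x\star y)\sim y|(y\star x)\text{ for all }x\neq y\in\Aa\text{ with }\source(x)=\source(y)\Big\rangle^+,
\]
where I also use that $x\,\hat{\star}\,y = x\star y$ for distinct $x,y\in\Aa$, as the completion alters $\star$ only on the diagonal and on the units.

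Both $\Cc(\sigma)$ and $\Cc(\hat{\Aa})$ are now quotients of $\Path(\Aa)$, and it remains to check that the generated congruences agree. The dictionary between the two relation sets is the left-non-degeneracy bijection: given a composable path $x|z$, set $y:=x\rightharpoonup z$, so that $\source(y)=\source(x)$ and $z=(x\rightharpoonup\blank)^{-1}(y)=x\star y$; conversely, given $x,y$ with $\source(x)=\source(y)$, the element $z:=x\star y$ is composable after $x$ and satisfies $x\rightharpoonup z=y$. The decisive identity is \eqref{eqn:claim}, namely $x\leftharpoonup(x\star y)=y\star x$, which was established inside the proof of Proposition \ref{prop:braided_quivers_are_wRCs} and rests on the involutivity condition \eqref{i1}. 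Using it, the defining relation $x|z\sim(x\rightharpoonup z)|(x\leftharpoonup z)$ of $\Cc(\sigma)$ becomes exactly $x|(x\star y)\sim y|(y\star x)$.

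The only subtlety---and the step I expect to require the most care---is the boundary case $x=y$, which occurs among the defining relations of $\Cc(\sigma)$ but is excluded from the presentation \eqref{presentation}. I would dispose of it by observing that if $y=x\rightharpoonup z$ happens to equal $x$, then $z=x\star x$ and, by \eqref{eqn:claim} applied with $y=x$, one has $x\leftharpoonup z=x\star x=z$; the corresponding $\Cc(\sigma)$-relation is then $x|z\sim x|z$, i.e.\ trivial, hence contained in every congruence. Thus every relation of $\Cc(\sigma)$ is either a relation of the presentation \eqref{presentation} or trivial, and conversely every relation of \eqref{presentation} is a relation of $\Cc(\sigma)$; the two therefore generate the same congruence on $\Path(\Aa)$. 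Consequently the identity on $\Aa$ descends to an isomorphism $\Cc(\sigma)\xrightarrow{\sim}\Cc(\hat{\Aa})$, which proves the lemma.
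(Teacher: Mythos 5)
Your proposal is correct and takes essentially the same route as the paper: completion, then Lemma \ref{lem:extension-is-weakRCs}, Remark \ref{rem:Ahat-satisfies-add-cond} and Proposition \ref{prop:Garside-family} to obtain the presentation \eqref{presentation} with generators $\hat{\Aa}\smallsetminus\mathcal{E}=\Aa$, then the left-non-degeneracy dictionary $z=x\star y \Leftrightarrow y=x\rightharpoonup z$ together with the identity \eqref{eqn:claim} to match the two relation sets. The only cosmetic difference is that the paper handles the diagonal case in the opposite direction---adjoining the trivial relations $x|(x\star x)\sim x|(x\star x)$ to the presentation rather than stripping them from the defining relations of $\Cc(\sigma)$---which is the same observation as your boundary-case argument.
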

\begin{proof}
Recall that $\hat{\Aa}$ is a unital weak RC-system (Lemma \ref{lem:extension-is-weakRCs}). Denote by $\mathcal{E}$ the unit family. By Remark \ref{rem:Ahat-satisfies-add-cond} and Proposition \ref{prop:Garside-family}, $\Cc(\hat{\Aa})$ admits the presentation
\begin{align*}&\!\!\Cc(\hat{\Aa})\\ &= \Big\langle \hat{\Aa}\smallsetminus\mathcal{E}\;\Big|\;  x|(x\,\hat{\star}\, y) \sim y|(y\,\hat{\star}\, x)\text{ for all }x\neq y \text{ in } \hat{\Aa}\smallsetminus \mathcal{E}\text{ such that } x\,\hat{\star}\, y \text{ is defined}\Big\rangle^+\\
	&=\Big\langle \Aa \;\Big|\; x|(x\star y) \sim y|(y\star x)\text{ for all }x\neq y\text{ in } \Aa\text{ such that }x\star y\text{ is defined}\Big\rangle^+\\
	&=\Big\langle \Aa \;\Big|\; x|(x\star y)\sim y|(y\star x)\text{ for all }x, y\in\Aa\text{ such that }x\star y\text{ is defined}\Big\rangle^+. \end{align*}
Notice that if $x=y$, the relation $x| (x\star y)\sim y|(y\star x)$ is trivial. Because $\star$ is left-non-degenerate,
$y\star x=x\leftharpoonup z$ if $z=x\star y$ $(\Leftrightarrow y=x\rightharpoonup z)$, and consequently,
\[
\Cc(\hat{\Aa})=\Big\langle \Aa \;\Big|\; x|z\sim (x\rightharpoonup z)|(x\leftharpoonup z)\text{ for all }x, z\in\Aa\text{ such that }x|z\text{ is defined}\Big\rangle^+. 
\]
This is exactly the definition of $\Cc(\sigma)$.
\end{proof}
Along the proof of Lemma \ref{lemma:iso_struct_cat},
we have also obtained that $\Cc(\sigma)\cong \Cc(\hat{\Aa})$ satisfies the additional condition (\ref{eqn:add_cond}) of Proposition \ref{prop:Garside-family}. Therefore, we obtain the following result
by merging Proposition \ref{prop:Garside-family} and Remark \ref{rem:Ahat-satisfies-add-cond}.
\begin{thm}\label{teo:Garside-structure}
Let $\sigma$ be an involutive left-non-degenerate YBM on a quiver $\Aa$.
Let $j\colon \Aa\to \Cc(\sigma)$ denote the obvious map sending $s\in \Aa$ to $s\in \Path(\Aa)$ regarded as a path of length one, and then sending $s$ to its class modulo the relations $x|y\sim (x\rightharpoonup y)|(x\leftharpoonup y)$.

Then, the map $j$ is an embedding of $\Aa$ into $\Cc(\sigma)$. Moreover, the category $\Cc(\sigma)$
\begin{enumerate}
	\item[\textit{i.}] satisfies a quadratic isoperimetric inequality with respect to the presentation 
	$$\Cc(\sigma) =  \Big\langle \Aa \;\Big|\; x|(x\star y) \sim y|(y\star x)\text{ for all }x\neq y\in \Aa\text{ such that }x\star y\text{ is defined}\Big\rangle^+\text{;}$$ 
	\item[\textit{ii.}] has no nontrivial invertible elements;
	\item[\textit{iii.}] is left-cancellative;
	\item[\textit{iv.}] admits unique conditional right-lcms, and the complementation is given by the operation $\star$;
	\item[\textit{v.}] is Noetherian;
	\item[\textit{vi.}] has a family of atoms given by the elements of $\Aa = \hat{\Aa}\smallsetminus\mathcal{E}$;
	\item[\textit{vii.}] has a Garside family $E$ given by the closure of $\Aa$ under right-lcms; this is the smallest Garside family for $\Cc(\sigma)$ which includes $\Aa$ and $\mathbf{1}_{\Cc(\sigma)}$.
\end{enumerate}
\end{thm}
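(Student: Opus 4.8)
The plan is to obtain every assertion by transporting Proposition \ref{prop:Garside-family} across the isomorphism $\Cc(\sigma) \cong \Cc(\hat{\Aa})$ of Lemma \ref{lemma:iso_struct_cat}. First I would recall the chain of reductions already assembled in this subsection: by Proposition \ref{prop:braided_quivers_are_wRCs} the operation $x \star y := (x \rightharpoonup \blank)^{-1}(y)$ makes $(\Aa, \star)$ into a left-non-degenerate weak RC-system in which $x \star y$ is defined exactly when $\source(x) = \source(y)$; by Lemma \ref{lem:extension-is-weakRCs} its completion $(\hat{\Aa}, \hat{\star})$ is a \emph{unital} weak RC-system with unit family $\mathcal{E} = \{\epsilon_\lambda\}_\lambda$; and by Remark \ref{rem:Ahat-satisfies-add-cond} this completion satisfies the additional condition \eqref{eqn:add_cond}. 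These are precisely the hypotheses under which Proposition \ref{prop:Garside-family} delivers all of its conclusions for $\Cc(\hat{\Aa})$, so the whole theorem will follow once each clause is matched to the corresponding clause of that proposition, using the identification $\hat{\Aa} \smallsetminus \mathcal{E} = \Aa$.

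Concretely, I would proceed clause by clause. The embedding statement for $j$ is Proposition \ref{prop:Garside-family}(\textit{i}), which embeds $\hat{\Aa} \smallsetminus \mathcal{E} = \Aa$ into $\Cc(\hat{\Aa})$; under the identification of Lemma \ref{lemma:iso_struct_cat} this map is exactly $j$. Clause \textit{i} (the quadratic isoperimetric inequality) is Proposition \ref{prop:Garside-family}(\textit{ii}), read through the presentation \eqref{presentation}, which in the present situation is precisely the displayed presentation of $\Cc(\sigma)$; this is where the computation in Lemma \ref{lemma:iso_struct_cat}, rewriting $x|(x\star y)\sim y|(y\star x)$ as $x|z \sim (x\rightharpoonup z)|(x\leftharpoonup z)$, is invoked. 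Clauses \textit{ii}, \textit{iii}, \textit{v} are respectively Proposition \ref{prop:Garside-family}(\textit{iii}), (\textit{iv}), (\textit{v}); clause \textit{vi} is the atom description in Proposition \ref{prop:Garside-family}(\textit{v}), giving $\Aa = \hat{\Aa} \smallsetminus \mathcal{E}$ as the family of atoms; and clause \textit{vii} is Proposition \ref{prop:Garside-family}(\textit{vi}).

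The only point requiring genuine care---and the one I would treat as the main obstacle---is clause \textit{iv}, specifically the claim that the complementation is given by $\star$ rather than by the more cumbersome $(u \,\hat{\star}\, v)^\bullet$ furnished by Proposition \ref{prop:Garside-family}(\textit{iv}). Here I would argue that condition \eqref{eqn:add_cond} collapses the two descriptions on the generating family $\Aa$: for distinct $x, y \in \Aa$ sharing a source, $x \,\hat{\star}\, y = x \star y$ lies in $\Aa$, so the operation $(\blank)^\bullet$ acts trivially and $x \backslash_R y = x \star y$; while the diagonal case $x = y$ contributes $x \,\hat{\star}\, x = \epsilon_{\target(x)} \in \mathcal{E}$, which $(\blank)^\bullet$ sends to the identity, matching the convention $\vartheta(x,x) = \varepsilon_{\target(x)}$ built into the syntactic complement. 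Since $\hat{\star}$ differs from $\star$ only on this diagonal, the complementation on $\Cc(\sigma)$ is correctly described by $\star$; uniqueness of conditional right-lcms then follows from Lemma \ref{lem:lcms-unique-up-to...} together with clause \textit{ii}. Everything else is a direct translation and requires no further work.
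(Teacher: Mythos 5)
Your proposal is correct and follows essentially the same route as the paper: the paper obtains the theorem precisely by merging Proposition \ref{prop:Garside-family} (applied to the completion $(\hat{\Aa},\hat{\star})$, which is unital by Lemma \ref{lem:extension-is-weakRCs} and satisfies condition \eqref{eqn:add_cond} by Remark \ref{rem:Ahat-satisfies-add-cond}) with the identification $\Cc(\sigma)\cong\Cc(\hat{\Aa})$ of Lemma \ref{lemma:iso_struct_cat}. Your clause-by-clause matching, including the reconciliation of the complementation $(u\,\hat{\star}\,v)^\bullet$ with the operation $\star$ via \eqref{eqn:add_cond} and the convention $\vartheta(x,x)=\varepsilon_{\target(x)}$, makes explicit what the paper leaves implicit and introduces no gap.
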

Moreover, $\Cc(\sigma)$ admits right-lcms, because of the proof of Proposition \ref{prop:Garside-family}.
\subsection{When the structure category is Ore}
\noindent Theorem \ref{teo:Garside-structure} establishes the existence of a Garside structure for $\Cc(\sigma)$. Our next purpose is understanding how this structure reflects onto the structure groupoid $\Gg(\sigma)$. Given a category $\Cc$, we recall from Proposition \ref{prop:envgroupoid} that the \emph{enveloping groupoid} $\Env(\Cc)$ is the category
\begin{align*}\Env(\Cc)=\Big\langle \Cc\cup \bar{\Cc} \;\Big| \; x|y\sim xy\text{ for all composable }x,y\in\Cc\smallsetminus\mathbf{1}_\Cc;\Big.\\ \Big.x \bar{x}\sim\varepsilon_{\source(x)},\, \bar{x}x\sim\varepsilon_{\target(x)},\; \mathbf{1}_\lambda\sim \varepsilon_\lambda\Big\rangle^+,\end{align*}
where $\bar{\Cc}$ here denotes as usual the opposite category of $\Cc$. The relations $x \bar{x}\sim\varepsilon_{\source(x)}$ and $ \bar{x}x\sim\varepsilon_{\target(x)}$ imply that the equivalence  class of $\bar{x}$ is the unique inverse of the equivalence class of $x$, hence in $\Env(\Cc)$ we shall harmlessly make confusion between the notation $\bar{x}$ and the notation $x^{-1}$.

Recall from Definition \ref{defin:Ore} the notion of \textit{left-Ore category}. From Proposition \ref{prop:Ore}, we can embed left-Ore categories into their enveloping groupoid. 

If a category $\Cc$ is left-Ore and admits left-lcms, and $\Ss$ is a Garside family in $\Cc$, then the enveloping groupoid $\Env(\Cc)$ inherits most of the Garside structure of $\Cc$. Namely, a \textit{symmetric $\Ss$-normal decomposition} is defined on $\Cc(\sigma)\Cc(\sigma)^{-1}\subseteq \Env(\Cc)$ \cite[Proposition III.2.20]{dehornoy2015foundations}.
\begin{prop}\label{prop:is-Ore}
The structure category $\Cc=\Cc(\sigma)$ of an involutive non-degenerate YBM is left-Ore, and it admits left-lcms. As a consequence, $\Cc(\sigma)$ is embedded into $\Env(\Cc)$, and every element of $\Cc\Cc^{-1}\subseteq \Env(\Cc)$ has a symmetric normal decomposition mutuated from the Garside normal form of $\Cc$.
\end{prop}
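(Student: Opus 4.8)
The plan is to reduce everything to the two defining ingredients of the left-Ore property—cancellativity, and the existence of common left-multiples for arrows sharing a target—and then read off both asserted consequences from the results already in place. By Theorem \ref{teo:Garside-structure}(iii) the category $\Cc(\sigma)$ is already left-cancellative, so the first genuine task is right-cancellativity. Here I would invoke the \emph{right}-non-degeneracy of $\sigma$, which was never used in Theorem \ref{teo:Garside-structure}: by Proposition \ref{prop:braided_quivers_are_w_co-RCs} it produces a left-non-degenerate weak co-RC-system $(\Aa,\bullet)$, with $x\bullet y=(\blank\leftharpoonup x)^{-1}(y)$ defined for all pairs sharing a target.

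The crux is that the \emph{same} presentation $\Cc(\sigma)=\langle \Aa\mid x|z\sim(x\rightharpoonup z)|(x\leftharpoonup z)\rangle^+$ is also short left-complemented, with short syntactic left-complement $\vartheta(z,w)=z\bullet w$. Indeed, setting $w=x\leftharpoonup z$, right-non-degeneracy recovers $x=z\bullet w$ uniquely, so each relation reads $\vartheta(z,w)|z\sim\vartheta(w,z)|w$ with $\vartheta(w,z)=x\rightharpoonup z=w\bullet z$ (this last identity being exactly the involutivity condition \eqref{i2}); the dualised conditions \textit{i–iii} for a short left-complemented presentation follow from involutivity and non-degeneracy just as in Theorem \ref{teo:Garside-structure}. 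The sharp $\vartheta$-cube condition for this left-complement is precisely the co-RC-law, valid by Proposition \ref{prop:braided_quivers_are_w_co-RCs}, so Proposition \ref{prop:complemented-implies-lcms-dual} yields that $\Cc(\sigma)$ is right-cancellative and admits conditional left-lcms, with $p\backslash_L q=\vartheta^*(p,q)$. Together with Theorem \ref{teo:Garside-structure}(iii), $\Cc(\sigma)$ is cancellative.

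Next I would upgrade conditional left-lcms to unconditional existence of common left-multiples. Since $\sigma$ is non-degenerate, $\bullet$ is \emph{total} on pairs sharing a target; hence $\vartheta$ is everywhere defined, and its extension $\vartheta^*$ of Lemma \ref{lem:extension-to-vartheta*-dual}, obtained by filling the dual grid, is everywhere defined on pairs of paths sharing a target. Consequently $(\vartheta^*(q,p))\,p=(\vartheta^*(p,q))\,q$ is a common left-multiple of any $p,q$ with a common target, so every such pair in fact admits a left-lcm. Combined with cancellativity this is exactly the left-Ore condition of Definition \ref{defin:Ore}, and $\Cc(\sigma)$ admits left-lcms. (Symmetrically, totality of $\star$ on same-source pairs supplies common right-multiples, so $\Cc(\sigma)$ is in fact Ore; this is harmless but clarifies why two-sided fractions behave well.)

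The consequences then follow formally. Proposition \ref{prop:Ore} applied to the left-Ore category $\Cc(\sigma)$ gives an injective functor $\iota\colon\Cc(\sigma)\to\Env(\Cc(\sigma))$, embedding $\Cc(\sigma)$ into its enveloping groupoid. Since $\Cc(\sigma)$ is left-Ore, admits left-lcms, and carries the Garside family $E$ of Theorem \ref{teo:Garside-structure}(vii), the symmetric normal form of \cite[Proposition III.2.20]{dehornoy2015foundations} is defined on $\Cc\Cc^{-1}\subseteq\Env(\Cc)$, mutuated from the Garside normal form of $\Cc$. The one step demanding real care—the main obstacle—is the dualisation in the second paragraph: verifying that the identical presentation is short left-complemented with complement $\bullet$, and that totality of $\bullet$ propagates from generators to arbitrary paths through $\vartheta^*$. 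Once this is settled the Ore condition, and hence the embedding and the symmetric normal form, are immediate.
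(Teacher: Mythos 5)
Your proposal is correct and follows essentially the same route as the paper: left-cancellativity from Theorem \ref{teo:Garside-structure}, re-reading the defining presentation as a short left-complemented one with syntactic complement given by $\bullet$ (Proposition \ref{prop:braided_quivers_are_w_co-RCs}), invoking Proposition \ref{prop:complemented-implies-lcms-dual} for right-cancellativity and conditional left-lcms, using totality of $\bullet$ (the grid construction) to produce common left-multiples, and concluding via Proposition \ref{prop:Ore} together with the symmetric normal form of \cite[Proposition III.2.20]{dehornoy2015foundations}. The only cosmetic difference is that the paper routes the left-complement through the completion $(\hat{\Aa},\hat{\bullet})$ so as to handle the diagonal values and works with the reduced $\star$-presentation (which already excludes the trivial relations $x|z\sim x|z$ that strictly speaking violate the left-complementedness conditions in your unreduced presentation), a bookkeeping point your rewriting step glosses over but which does not affect the argument.
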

\begin{proof}
By Theorem \ref{teo:Garside-structure}, $\Cc=\Cc(\sigma)$ is left-cancellative. 

Using the map $\bar{\sigma}$ on $\bar{\Aa}$ and the weak RC-system $(\bar{\Aa},\bar{\star})$, as in the proof of Proposition \ref{prop:braided_quivers_are_w_co-RCs}, we obtain immediately that $\Cc(\bar{\sigma})$ admits right-lcms, as was mentioned after Theorem \ref{teo:Garside-structure}. Because $\Cc(\sigma)$ is contravariantly isomorphic to $\Cc(\bar{\sigma})$, we conclude that $\Cc(\sigma)$ admits left-lcms.

\end{proof}
\subsection{Garside families for involutive non-degenerate YBMs}
Let $\sigma$ be an involutive non-degenerate YBM on a quiver $\Aa$. By Proposition \ref{prop:is-Ore}, the
structure category $\Cc(\sigma)$ is left-Ore and it admits left-lcms. Recall from \S\ref{subsection:YBMandRC} that $(\Aa, \star, \tilde{\star})$ is a weak RLC-system, with $x\star y=(x\rightharpoonup\blank)^{-1}(y)$ 
$(\source(x)=\source(y))$ and
$x\,\tilde{\star}\,y=(\blank\leftharpoonup y)^{-1}(x)$
$(\target(x)=\target(y))$.

We now give a concrete way to describe the Garside family $E$ of 
$\Cc(\sigma)$ in Theorem
\ref{teo:Garside-structure}.
This description shows that the Garside family $E$ is \textit{perfect} (see Definition \ref{def:perfect}). 

Recall from \cite[Definition III.2.28]{dehornoy2015foundations} that, given a category $\Cc$ and a generating subfamily $\Ss\subseteq \Cc$, a \emph{left-lcm witness} $\tilde{\theta}$ on the subfamily
$\Ss^\sharp = \Ss\Cc^\times \cup \Cc^\times$ (see \S\ref{subsection:Garside})
is a partially defined map from $\Ss^\sharp\times \Ss^\sharp$
to $\Path(\Ss^\sharp)$ satisfying: (1) for all $s, t\in \Ss^\sharp$,
the elements $\tilde{\theta}(s, t)$ and $\tilde{\theta}(t, s)$ exist, if and only if 
$s$ and $t$ admits a left-lcm;
and (2) in this case, $\tilde{\theta}(s, t)t=\tilde{\theta}(t, s)s$ is a left-lcm of $s$ and $t$.
We moreover assume that $\tilde{\theta}(s,s)$ is always defined; and the properties of the left-lcm witness clearly imply $\tilde{\theta}(s, s)=\mathbf{1}_{\source(s)}$ for all $s\in\Ss^\sharp$.
A left-lcm witness $\tilde{\theta}$ on
$\Ss^\sharp$ is called \emph{short} if $\tilde{\theta}(s, t)$ belongs to $\Ss^\sharp$ or is empty for all $s, t$.

\begin{defin}[{\cite[Definition III.3.6]{dehornoy2015foundations}}]\label{def:perfect}
Let $\Cc$ be a left-Ore category that admits left-lcms.
A Garside family $\Ss$ in $\Cc$ is \emph{perfect}, if there exists a short left-lcm witness $\tilde{\theta}$ on
$\Ss^\sharp$ such that $\tilde{\theta}(s,t)t$ lies in $\Ss^\sharp$ for all $s,t$ with the same target.
\end{defin}   

Because the structure category $\Cc(\sigma)$ has no nontrivial invertible elements
(Theorem \ref{teo:Garside-structure}), we have $\Ss^\sharp=\Ss\cup \mathbf{1}_\Aa$ for all subfamilies $\Ss\subset\Cc(\sigma)$. Here, $\mathbf{1}_\Aa=\{\mathbf{1}_\lambda \mid \lambda\in \Obj(\Aa)\}\subset\Cc(\sigma)$.

Let $\lambda\in\Lambda = \Obj(\Cc(\sigma))$ and let $I\neq\emptyset$ be a finite subset of $\Aa(\lambda,\Lambda)$.
There exists the right-lcm $\Delta_I$ of the set $I$, which can be computed explicitly
by means of the \emph{RC-calculus} introduced by Dehornoy \cite{DehornoyRCcalculus}; see also \cite[\S XIII.2.2]{dehornoy2015foundations}. 
For $I = \set{x_1,\dots, x_n}$
with $x_i\ne x_j$ $(i\ne j)$, we define as in \cite{DehornoyRCcalculus}
\[ \Omega_1(x_1) = x_1, \quad \Omega_i(x_1,\dots, x_i) = \Omega_{i-1}(x_1,\dots, x_{i-1})\star \Omega_{i-1}(x_1,\dots, x_{i-2}, x_i)\ (2\leq i\leq n). \]

Observe that these $\Omega_i(x_1,\dots, x_i)$ for $i=1, \ldots n$
are well defined, because the source of $\Omega_i(x_1, \ldots, x_i)$
is the same as that of $\Omega_i(x_1, \ldots, x_{i-1}, x_{i+1})$ 
for any $i=1, \ldots, n-1$.
They satisfy,
for every permutation $\pi$ in the symmetric group $\mathfrak{S}_{i-1}$ $(i\geq 2)$,
\begin{equation}\label{eq:omegapermutation}
\Omega_i(x_{\pi(1)}, \ldots, x_{\pi(i-1)}, x_i)=\Omega_i(x_1, \ldots, x_i),
\end{equation}
and
$\Delta_I=\Delta_n(x_1, \ldots, x_n)=
[\Omega_1(x_1) | \Omega_2(x_1, x_2)|\dots |\Omega_n(x_1, \dots, x_n)]\in\Cc(\sigma)$
is the 
the right-lcm of the finite set $I$, with the same argument as in \cite[Lemma 3.3]{DehornoyRCcalculus}; see also Figure \ref{fig:Delta}. The order of the arrows $x_1, \dots, x_n$ is irrelevant, since $\Delta_n$ is a symmetric function of them (cf.\@ \cite[Lemma 2.6]{DehornoyRCcalculus}). \begin{rem}
The rule
\[ \Delta_2(x,y)\backslash_R [z] = [x|x\star y]\backslash_R [z] = [(x\star y)\star (x\star z)]  \]
is a compatibility between the product and the weak RC-system structure in $\Cc(\sigma)$, for pairwise distinct $x,y,z\in\Aa$ (cf.\@ \cite[Proposition II.2.15]{dehornoy2015foundations}). Moreover, this allows us to rewrite $[\Omega_i(x_1,\dots, x_i)] = \Delta_{i-1}(x_1,\dots, x_{i-1})\backslash_R [x_i]$ when $x_j\neq x_k$ for all $j\neq k$.
\end{rem} 

Because of Theorem \ref{teo:Garside-structure},
the Garside family $E$ is given by the closure of $\Aa$ under right-lcms. For all finite nonempty subsets $I$ and $J$ of $\Aa(\lambda, \Lambda)$, one has that $\Delta_{I\cup J}$ is the right-lcm of $\Delta_I$ and $\Delta_J$, thus
the following is immediate.
\begin{prop}\label{prop:Deltas}
The Garside family $E$ can be described as $E=\{ \Delta_I\in\Cc(\sigma)\mid  \lambda\in \Lambda,\;I\subset\Aa(\lambda,\Lambda),\; 1\le |I|<+\infty\} \cup \mathbf{1}_\Aa$.
\end{prop}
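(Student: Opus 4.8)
The plan is to establish the claimed equality by a double inclusion, using the description of $E$ as the closure of $\Aa$ under right-lcms from Theorem \ref{teo:Garside-structure}(\textit{vii}), together with the compatibility $\Delta_{I\cup J}=\mathrm{lcm}(\Delta_I,\Delta_J)$ recalled immediately before the statement. Throughout, I would keep in mind that a common right-multiple of two arrows forces them to share a source, so that right-lcms never mix arrows lying in different sets $\Aa(\lambda,\Lambda)$.

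First I would prove that every $\Delta_I$ lies in $E$, and that $\mathbf{1}_\Aa\subseteq E$. The latter is immediate from Theorem \ref{teo:Garside-structure}(\textit{vii}), which presents $E$ as the smallest Garside family containing $\Aa$ and $\mathbf{1}_{\Cc(\sigma)}=\mathbf{1}_\Aa$. For the former, note that $\Delta_{\{x\}}=\Omega_1(x)=x$, so the claim holds when $|I|=1$; for $|I|=n>1$ and $I=\{x_1,\dots,x_n\}\subseteq\Aa(\lambda,\Lambda)$, the compatibility yields $\Delta_I=\mathrm{lcm}\bigl(x_1,\Delta_{\{x_2,\dots,x_n\}}\bigr)$, exhibiting $\Delta_I$ as a right-lcm of two elements that belong to $E$ by induction on $n$; since $E$ is closed under right-lcms, $\Delta_I\in E$. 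This gives the inclusion $\{\Delta_I\}\cup\mathbf{1}_\Aa\subseteq E$.

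For the reverse inclusion I would set $\mathcal{D}:=\{\Delta_I\}\cup\mathbf{1}_\Aa$ and check that $\mathcal{D}$ contains $\Aa$ and is closed under right-lcms; minimality of the closure then forces $E\subseteq\mathcal{D}$. The containment $\Aa\subseteq\mathcal{D}$ is again $\Delta_{\{x\}}=x$. For closure, take $u,v\in\mathcal{D}$ admitting a common right-multiple; then $\source(u)=\source(v)=:\lambda$. If either factor is an identity $\mathbf{1}_\lambda$, the right-lcm is the other factor and stays in $\mathcal{D}$; otherwise $u=\Delta_I$ and $v=\Delta_J$ with $I,J\subseteq\Aa(\lambda,\Lambda)$, and the recalled compatibility gives $\mathrm{lcm}(\Delta_I,\Delta_J)=\Delta_{I\cup J}\in\mathcal{D}$, since $I\cup J$ is again a finite nonempty subset of $\Aa(\lambda,\Lambda)$. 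Combining the two inclusions proves the proposition.

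The argument is short precisely because the real work is hidden in the identity $\Delta_{I\cup J}=\mathrm{lcm}(\Delta_I,\Delta_J)$, which in turn rests on the RC-calculus and on the symmetry \eqref{eq:omegapermutation} of the $\Omega_i$; the only genuinely delicate bookkeeping I expect is the source-matching observation, ensuring that the right-lcm closure of $\Aa$ stays within a single fibre $\Aa(\lambda,\Lambda)$ at a time and is thus exhausted by the symmetric lcms $\Delta_I$.
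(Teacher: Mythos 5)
Your proof is correct and follows essentially the same route as the paper: the paper likewise deduces the proposition "immediately" from Theorem \ref{teo:Garside-structure}(\textit{vii}) (that $E$ is the right-lcm closure of $\Aa$ containing $\mathbf{1}_{\Cc(\sigma)}$) together with the identity $\Delta_{I\cup J}=\mathrm{lcm}(\Delta_I,\Delta_J)$ and $\Delta_{\{x\}}=x$. Your double-inclusion write-up merely makes explicit the induction and the closure/minimality argument that the paper leaves implicit.
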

\begin{figure}[t]
\centering
\begin{tikzpicture}[x=3cm, y=3cm]
	\draw[-Stealth] (-0.5,0) to node[above] {$x_2$} (0,0);
	\draw[-Stealth] (0,0.5) to node[above]{$\Omega_2(x_1, x_2)$} (0.5,0.5);
	\draw[-Stealth] (0,-0.5) to (0.5,-0.5);
	\draw[-Stealth] (0.5,0) to (1,0);
	\draw[-Stealth] (0,0) to (0.5,0.5);
	\draw[-Stealth] (0,0) to (0.5,-0.5);
	\draw[-Stealth] (0.5,0.5) to node[right]{$\Omega_3(x_1, x_2, x_3)$} (1,0);
	\draw[-Stealth] (0.5,-0.5) to (1,0);
	\draw[-Stealth] (-0.5,0) to node[left] {$x_1(=\Omega_1(x_1))\;$} (0,0.5);
	\draw[-Stealth] (-0.5,0) to node[left] {$x_3$} (0,-0.5);
	\draw[-Stealth] (0,0.5) to node[right] {} (0.5,0);
	\draw[-Stealth] (0,-0.5) to node[right] {} (0.5,0);
	\node() at(-0.6,0) {$\lambda$};
\end{tikzpicture}
\caption{The definition of $\Delta_3(x_1, x_2, x_3)$ $(\source(x_1)=\source(x_2)=\source(x_3))$.}
\label{fig:Delta}
\end{figure}

We observe that $\Delta_I$ is also the left-lcm of a suitable set $\set{\tilde{x}_1,\dots, \tilde{x}_n}$, that we are going to construct. Indeed, for any finite set $J = \set{y_1,\dots, y_n}\subset\Aa(\Lambda,\mu)$
with $y_i\ne y_j$ $(i\ne j)$, we define
\[ \widetilde{\Omega}_1(y_1) = y_1, \quad \widetilde{\Omega}_j(y_1,\dots, y_j) = \widetilde{\Omega}_{j-1}(y_1, y_3, \dots, y_j)\,\tilde{\star}\, \widetilde{\Omega}_{j-1}(y_2,\dots, y_j)\ (2\leq j\leq n), \]
and
$\widetilde{\Delta}_J=\widetilde{\Delta}_n(y_1, \ldots, y_n)=
[\widetilde{\Omega}_n(y_1, \dots, y_n) | \widetilde{\Omega}_{n-1}(y_2, \ldots, y_n)|\dots |\widetilde{\Omega}_1(y_n)]\in\Cc(\sigma)$.
By a dual argument, $\widetilde{\Delta}_J$ is the left-lcm of $J$.

If we set
$\tilde{x}_i=\Omega_n(x_1, \ldots, \hat{x}_i, \ldots, x_n, x_i)$ $(i=1, \ldots, n)$, then the $\tilde{x}_i$'s for $i=1, \ldots, n$ are pairwise distinct, and $\target(\tilde{x}_1)=\cdots =\target(\tilde{x}_n)$.
Indeed, for $i\ne j$,
\[
\tilde{x}_i=\Omega_n(x_1, \ldots, \hat{x}_i, \ldots, \hat{x}_j, \ldots, x_n, x_j, x_i)
\]
from \eqref{eq:omegapermutation}, and
consequently
\begin{align*}
\target(\tilde{x}_i)=&\target(\Omega_n(x_1, \ldots, \hat{x}_i, \ldots, \hat{x}_j, \ldots, x_n, x_j, x_i))
\\
=&\target(\Omega_{n-1}(x_1, \ldots, \hat{x}_i, \ldots, \hat{x}_j, \ldots, x_n, x_j)\star\Omega_{n-1}(x_1, \ldots, \hat{x}_i, \ldots, \hat{x}_j, \ldots, x_n, x_i))
\\
=&
\target(\Omega_{n-1}(x_1, \ldots, \hat{x}_i, \ldots, \hat{x}_j, \ldots, x_n, x_i)\star\Omega_{n-1}(x_1, \ldots, \hat{x}_i, \ldots, \hat{x}_j, \ldots, x_n, x_j))
\\
=&\target(\tilde{x}_j),
\end{align*}
because $(\Aa, \star)$ is a weak RC-system.

Analogously to the proofs of \cite[Lemma 3.3]{DehornoyRCcalculus}
and \cite[Lemma XIII.2.27]{dehornoy2015foundations}, one has \begin{equation}\label{eq:omega}\Omega_i(x_1, \ldots, x_i)=\widetilde{\Omega}_{n+1-i}(\tilde{x}_i, \ldots, \tilde{x}_n).\end{equation} For each $n$ we can prove it by induction on $i=1, \ldots,n$,
using the fact that $(A, \star, \tilde{\star})$ is a weak RLC-system.
If $i=n$, then \eqref{eq:omega} holds trivially.
For $i< n$, we write $s=\Omega_i(x_1, \ldots, x_i)$,
$s'=\Omega_i(x_1, \ldots, x_{i-1}, x_{i+1})$,
$t=\Omega_{i+1}(x_1, \ldots, x_i, x_{i+1})$,
and $t'=\Omega_{i+1}(x_1, \ldots, x_{i-1}, x_{i+1}, x_i)$, and assume
\[ t=\widetilde{\Omega}_{n-i}(\tilde{x}_{i+1}, \ldots, \tilde{x}_n),\quad 
t'=\widetilde{\Omega}_{n-i}(\tilde{x}_i, \tilde{x}_{i+2}, \ldots, \tilde{x}_n),\]
which are the inductive hypotheses. Then $s\star s'=t$ and $s'\star s=t'$ by the definition of $\Omega_i$. It follows
from Definition \ref{definition:RLCsystem} that $t'\,\tilde{\star}\,t=s$,
and consequently
$\Omega_i(x_1, \ldots, x_i)=s=t'\,\tilde{\star}\,t=\widetilde{\Omega}_{n+1-i}(\tilde{x}_i,\ldots, \tilde{x}_n)$, by the definition of $\widetilde{\Omega}_j$.
This completes the proof of \eqref{eq:omega}.

Hence,
$\Delta_n(x_1, \ldots, x_n)=\widetilde{\Delta}_n(\tilde{x}_1, \ldots, \tilde{x}_n)$. Therefore, $\Delta_I$ $(I=\{x_1, \ldots, x_n\})$
is the left-lcm of $\{\tilde{x}_1, \ldots, \tilde{x}_n\}$.

On the other hand, if we set  $\tilde{y}_j=\widetilde{\Omega}_n(y_j, y_1, \ldots, \hat{y}_j, \ldots, y_n)$
$(j=1, \ldots, n)$, then the $\tilde{y}_j$'s for $j=1, \ldots, n$ are pairwise distinct, $\source(\tilde{y}_1)=\cdots =\source(\tilde{y}_n)$,
and 
\begin{equation}\label{eqn:tildeomega}
\widetilde{\Omega}_j(y_j, \ldots, y_1)=\Omega_{n+1-j}(\tilde{y}_n, \ldots, \tilde{y}_j),\quad 
\widetilde{\Delta}_n(y_1, \ldots, y_n)=\Delta_n(\tilde{y}_1, \ldots, \tilde{y}_n).
\end{equation}
The left-lcm of any finite nonempty set $ J\subset\Aa(\Lambda, \mu)$ 
is thus an element of the Garside family $E$. This proves the following.
\begin{prop}\label{prop:Deltas1}
One has $E=\{ \widetilde{\Delta}_I\in\Cc(\sigma)\mid \mu\in \Lambda,\; I\subset \Aa(\Lambda,\mu),\; 1\le|I|<+\infty\}\cup \mathbf{1}_\Aa$.
\end{prop}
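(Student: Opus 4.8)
The plan is to prove the asserted set equality by double inclusion, taking as starting point the description of $E$ already obtained in Proposition~\ref{prop:Deltas}, namely $E=\{\Delta_I\mid \lambda\in\Lambda,\ I\subset\Aa(\lambda,\Lambda),\ 1\le|I|<+\infty\}\cup\mathbf{1}_\Aa$. Since the family $\mathbf{1}_\Aa$ of identities occurs verbatim in both descriptions, it suffices to show that the set of right-lcms $\Delta_I$ (over finite nonempty families of arrows with a common source) coincides with the set of left-lcms $\widetilde{\Delta}_J$ (over finite nonempty families of arrows with a common target). All the analytic content needed for this has in fact been assembled in the discussion preceding the statement; the proof is therefore a matter of packaging the two RC-calculus identities \eqref{eq:omega} and \eqref{eqn:tildeomega} into the desired equality of subfamilies.

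For the inclusion of $E$ into the right-hand side, I would fix $\Delta_I\in E$ with $I=\{x_1,\dots,x_n\}\subset\Aa(\lambda,\Lambda)$ and pass to the arrows $\tilde{x}_i:=\Omega_n(x_1,\dots,\hat{x}_i,\dots,x_n,x_i)$. These were checked above to be pairwise distinct and to share a common target $\mu$, so that $\tilde{I}:=\{\tilde{x}_1,\dots,\tilde{x}_n\}$ is a legitimate finite nonempty subset of $\Aa(\Lambda,\mu)$; and the identity $\Delta_n(x_1,\dots,x_n)=\widetilde{\Delta}_n(\tilde{x}_1,\dots,\tilde{x}_n)$, which is an immediate consequence of \eqref{eq:omega} together with the definitions of $\Delta_n$ and $\widetilde{\Delta}_n$, exhibits $\Delta_I$ as the left-lcm $\widetilde{\Delta}_{\tilde{I}}$. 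Thus every element of $E$ lies in the proposed family. For the reverse inclusion, I would run the dual argument: given a left-lcm $\widetilde{\Delta}_J$ with $J=\{y_1,\dots,y_n\}\subset\Aa(\Lambda,\mu)$, one forms $\tilde{y}_j:=\widetilde{\Omega}_n(y_j,y_1,\dots,\hat{y}_j,\dots,y_n)$; these are again pairwise distinct and cosourced, so $\tilde{J}:=\{\tilde{y}_1,\dots,\tilde{y}_n\}$ is a finite nonempty subset of some $\Aa(\lambda,\Lambda)$. The second identity in \eqref{eqn:tildeomega}, $\widetilde{\Delta}_n(y_1,\dots,y_n)=\Delta_n(\tilde{y}_1,\dots,\tilde{y}_n)$, then shows $\widetilde{\Delta}_J=\Delta_{\tilde{J}}$, and the latter belongs to $E$ by Proposition~\ref{prop:Deltas}. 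Combining the two inclusions yields the stated equality.

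The genuinely substantive part of this argument---the two symmetry identities of the RC-calculus---has already been carried out, and rests on the fact that $(\Aa,\star,\tilde{\star})$ is a weak RLC-system (Definition~\ref{definition:RLCsystem}), which lets one convert the cascade of $\star$-operations defining $\Omega_i$ into the cascade of $\tilde{\star}$-operations defining $\widetilde{\Omega}_j$. Consequently I do not expect the remaining bookkeeping to present any real obstacle; the only point requiring genuine care, and the place where the weak RC-/RLC-laws are actually used, is the verification that the auxiliary families $\tilde{I}$ and $\tilde{J}$ are well formed---that the $\tilde{x}_i$ (resp.\ $\tilde{y}_j$) are distinct and cotargeted (resp.\ cosourced)---exactly as was done above for $\target(\tilde{x}_i)$.
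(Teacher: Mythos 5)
Your proposal is correct and takes essentially the same route as the paper: the paper also proves the statement by double inclusion, using the identity $\Delta_n(x_1,\dots,x_n)=\widetilde{\Delta}_n(\tilde{x}_1,\dots,\tilde{x}_n)$ coming from \eqref{eq:omega} to show every $\Delta_I\in E$ is a left-lcm of the cotargeted family $\{\tilde{x}_1,\dots,\tilde{x}_n\}$, and the identity $\widetilde{\Delta}_n(y_1,\dots,y_n)=\Delta_n(\tilde{y}_1,\dots,\tilde{y}_n)$ from \eqref{eqn:tildeomega} to show every $\widetilde{\Delta}_J$ equals some $\Delta_{\tilde{J}}$ and hence lies in $E$ by Proposition \ref{prop:Deltas}. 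Your identification of the well-formedness of the auxiliary families (pairwise distinctness and common target, resp.\ common source, of the $\tilde{x}_i$ and $\tilde{y}_j$) as the only point needing care matches the paper's treatment exactly.
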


Now we show that the Garside family $E$ is perfect.
Let $f, g\in E\smallsetminus\mathbf{1}_\Aa$ with the same target $\mu\in \Lambda$.
There exist finite subsets $I, J\subset\Aa(\Lambda, \mu)$ 
satisfying $f=\widetilde{\Delta}_I$ and $g=\widetilde{\Delta}_J$.
Then $\widetilde{\Delta}_{I\cup J}$ is the left-lcm of $f=\widetilde{\Delta}_I$ and $g=\widetilde{\Delta}_J$.
Hence, sending the pair $(f,g)$ to the element
$\widetilde{\Delta}_J\backslash_L\widetilde{\Delta}_{I\cup J}$ yields a left-lcm witness on $E$.

Let $J=\{s_{m+1}, \ldots, s_n\}$
and 
$I\cup J=\{ s_1, \ldots, s_m, s_{m+1}, \ldots, s_n\}$
$(m\leq n)$, where the $s_i$'s are all distinct.
Since
\begin{align*}
\widetilde{\Delta}_J&=\widetilde{\Delta}_{n-m}(s_{m+1}, \ldots, s_n)\\
&=[\widetilde{\Omega}_{n-m}(s_{m+1}, \dots, s_n) | \widetilde{\Omega}_{n-m-1}(s_{m+2}, \ldots, s_n)|\dots |\widetilde{\Omega}_1(s_n)],\\
\widetilde{\Delta}_{I\cup J}&=
\widetilde{\Delta}_n(s_1, \ldots, s_n)\\
&=[\widetilde{\Omega}_{n}(s_1, \dots, s_n) | \widetilde{\Omega}_{n-1}(s_2, \ldots, s_n)|\dots |\widetilde{\Omega}_1(s_n)]\\
&=[\widetilde{\Omega}_{n}(s_1, \dots, s_n) |\dots | \widetilde{\Omega}_{n-m+1}(s_m, \ldots, s_n)]\widetilde{\Delta}_J,
\end{align*}
we have $
\widetilde{\Delta}_J\backslash_L\widetilde{\Delta}_{I\cup J}
=[\widetilde{\Omega}_{n}(s_1, \dots, s_n) |\dots | \widetilde{\Omega}_{n-m+1}(s_m, \ldots, s_n)]$.
Because of (\ref{eqn:tildeomega}),
\[
\widetilde{\Delta}_J\backslash_L\widetilde{\Delta}_{I\cup J}
=[\Omega_1(\tilde{s}_1) |\Omega_2(\tilde{s}_1, \tilde{s}_2)|\dots | \Omega_m(\tilde{s}_1, \ldots, \tilde{s}_m)]=\Delta_m(\tilde{s}_1, \ldots, \tilde{s}_m).
\]
It follows from Proposition \ref{prop:Deltas}
that this is an element of the Garside family $E$,
and $\widetilde{\Delta}_J\backslash_L\widetilde{\Delta}_{I\cup J}$
gives 
a short left-lcm witness of $f$ and $g$ as a result.
By Proposition \ref{prop:Deltas1}, $(\widetilde{\Delta}_J\backslash_L\widetilde{\Delta}_{I\cup J})\widetilde{\Delta}_J
=\widetilde{\Delta}_{I\cup J}$ is an element of $E$.
We have proven the following. 
\begin{cor}
The Garside family $E$ is perfect. 
\end{cor}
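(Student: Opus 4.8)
The plan is to verify perfectness directly from the definition, namely to exhibit a short left-lcm witness $\tilde{\theta}$ on $E^\sharp$ for which $\tilde{\theta}(f,g)\,g$ lies in $E^\sharp$ whenever $f,g$ share a target. Since $\Cc(\sigma)$ has no nontrivial invertible elements (Theorem \ref{teo:Garside-structure}), the group of units is trivial and $E^\sharp = E$; so the task reduces to defining $\tilde{\theta}$ on pairs from $E\smallsetminus\mathbf{1}_\Aa$ with a common target and checking two closure properties. Because $\Cc(\sigma)$ is left-Ore and admits left-lcms (Proposition \ref{prop:is-Ore}), any two arrows with the same target in fact admit a left-lcm, so the witness will be everywhere defined on same-target pairs.

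The construction exploits the two parallel descriptions of $E$. By Proposition \ref{prop:Deltas1} every $f\in E\smallsetminus\mathbf{1}_\Aa$ with target $\mu$ is a left-lcm $\widetilde{\Delta}_I$ of a finite set $I\subset\Aa(\Lambda,\mu)$, while by Proposition \ref{prop:Deltas} the same element is a right-lcm of a finite set of arrows with a common source. Given $f=\widetilde{\Delta}_I$ and $g=\widetilde{\Delta}_J$ with common target $\mu$, I would take their left-lcm to be $\widetilde{\Delta}_{I\cup J}$ (immediate, since $\widetilde{\Delta}_{I\cup J}$ is a common left-multiple dominated by every other), and set $\tilde{\theta}(f,g):=\widetilde{\Delta}_J\backslash_L\widetilde{\Delta}_{I\cup J}$, the left-complement of Definition \ref{complementationop} computed through the operation $\tilde{\star}$. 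By the very definition of the complement, $\tilde{\theta}(f,g)\,g=\widetilde{\Delta}_{I\cup J}$ is the claimed left-lcm, and symmetry of $\widetilde{\Delta}_{I\cup J}$ gives $\tilde{\theta}(f,g)\,g=\tilde{\theta}(g,f)\,f$, so $\tilde{\theta}$ is a left-lcm witness.

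The one genuinely non-formal step is to prove that $\tilde{\theta}$ is \emph{short}, i.e.\@ that $\tilde{\theta}(f,g)$ is itself a single element of $E$ rather than merely a path. Writing $J=\{s_{m+1},\dots,s_n\}$ and $I\cup J=\{s_1,\dots,s_n\}$, the telescoping factorisation $\widetilde{\Delta}_{I\cup J}=[\widetilde{\Omega}_n(s_1,\dots,s_n)|\cdots|\widetilde{\Omega}_{n-m+1}(s_m,\dots,s_n)]\,\widetilde{\Delta}_J$ lets one read off $\tilde{\theta}(f,g)=[\widetilde{\Omega}_n(s_1,\dots,s_n)|\cdots|\widetilde{\Omega}_{n-m+1}(s_m,\dots,s_n)]$. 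The point is to recognise this product of $\widetilde{\Omega}$'s as a \emph{right}-lcm: invoking the intertwining identities \eqref{eq:omega} and \eqref{eqn:tildeomega} between the $\Omega$- and $\widetilde{\Omega}$-calculi—which themselves rest on the compatibility axiom of the weak RLC-system $(\Aa,\star,\tilde{\star})$—I would rewrite it as $\Delta_m(\tilde{s}_1,\dots,\tilde{s}_m)$ for pairwise-distinct arrows $\tilde{s}_i$ with a common source. By Proposition \ref{prop:Deltas} this lies in $E=E^\sharp$, so $\tilde{\theta}$ is short.

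Finally the remaining closure condition is free: $\tilde{\theta}(f,g)\,g=\widetilde{\Delta}_{I\cup J}\in E$ by Proposition \ref{prop:Deltas1}. Assembling these facts produces a short left-lcm witness whose value, post-composed with $g$, stays in $E^\sharp$ for all $f,g$ with equal target—precisely the requirement for perfectness. I expect the identification of the left-complement with a genuine $\Delta_m$ (the shortness) to be the only real obstacle; everything else is bookkeeping with the dual lcm descriptions already established.
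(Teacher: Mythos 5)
Your proposal is correct and follows essentially the same route as the paper: the same witness $\tilde{\theta}(f,g)=\widetilde{\Delta}_J\backslash_L\widetilde{\Delta}_{I\cup J}$, the same telescoping factorisation of $\widetilde{\Delta}_{I\cup J}$ through the $\widetilde{\Omega}$'s, and the same use of \eqref{eqn:tildeomega} to recognise the complement as $\Delta_m(\tilde{s}_1,\dots,\tilde{s}_m)\in E$ via Proposition \ref{prop:Deltas}, with closure under composition supplied by Proposition \ref{prop:Deltas1}. You correctly identified the shortness of the witness as the only non-formal step, which is exactly where the paper's argument concentrates its effort.
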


Finally we give a sufficient condition for the Garside family $E$ to
satisfy $E\ne\Cc(\sigma)$.
If $x, y\in\Aa$ 
satisfy $\source(x)=\source(y)$, then
$\source(x\star y)=\target(x)$,
and $x\star y\in\Aa(\target(x),\Lambda)$ as a result.
Because the map $x\star \blank: \Aa(\source(x),\Lambda)\to\Aa(\target(x),\Lambda)$
is bijective and $\Aa\ne\emptyset$,
there exists an element of $\Cc(\sigma)$ of any (finite) length.

Since the length of $\Delta_I$ is $|I|$ by its definition,
an immediate consequence is the following.
\begin{cor}\label{cor:bounded}
Suppose that there exists a finite number $n$ such that
$|\Aa(\lambda,\Lambda)|\leq n$ for all $\lambda\in\Lambda$. Then, the length of any element of the Garside family $E$ is bounded by $n$. In particular, $E\subsetneq \Cc(\sigma)$.
\end{cor}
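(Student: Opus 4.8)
The plan is to combine the explicit description of the Garside family $E$ furnished by Proposition \ref{prop:Deltas} with the length-homogeneity of the presentation. First I would recall that, by Proposition \ref{prop:Deltas}, every element of $E$ is either an identity $\mathbf{1}_\lambda$ or a right-lcm $\Delta_I$ for some finite nonempty $I\subseteq\Aa(\lambda,\Lambda)$. By its very construction, $\Delta_I$ is represented by the path $\Omega_1(x_1)|\Omega_2(x_1,x_2)|\cdots|\Omega_{|I|}(x_1,\dots,x_{|I|})$, so its length equals $|I|$. Under the hypothesis $|\Aa(\lambda,\Lambda)|\le n$ we get $|I|\le|\Aa(\lambda,\Lambda)|\le n$, whence the length of each $\Delta_I$ is at most $n$; since the identities have length $0$, this bounds the length of every element of $E$ by $n$.

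To make ``length'' an invariant of the element of $\Cc(\sigma)$ rather than of a chosen representative path, I would invoke homogeneity: the defining relations $x|(x\star y)\sim y|(y\star x)$ have length two on both sides, so by Proposition \ref{prop:homogeneous-implies-noetherian} the presentation is homogeneous, equivalent paths have equal length, and the length descends to a well-defined function on $\Cc(\sigma)$. This legitimises the bound just obtained and is what makes the statement meaningful.

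For the strict inclusion $E\subsetneq\Cc(\sigma)$, I would exhibit an element of $\Cc(\sigma)$ of length strictly greater than $n$. Starting from any arrow, which exists because $\Aa\neq\emptyset$, I would extend it to a composable path of arbitrary length as follows: given a path whose last arrow $x$ ends at $\mu=\target(x)$, the map $x\star\blank\colon\Aa(\source(x),\Lambda)\to\Aa(\mu,\Lambda)$ is a bijection whose domain contains $x$ and is therefore nonempty, so $\Aa(\mu,\Lambda)\neq\emptyset$ and there is an arrow with source $\mu$ to append. Iterating this $n+1$ times produces a composable path of length $n+1$, hence an element of $\Cc(\sigma)$ of length $n+1>n$ by homogeneity; by the length bound above, this element cannot lie in $E$, so $E\subsetneq\Cc(\sigma)$.

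I do not anticipate a genuine obstacle here: once Proposition \ref{prop:Deltas} and homogeneity are granted, the argument is essentially bookkeeping. The only point that deserves a moment's care is the construction of arbitrarily long composable paths, where one must verify that every vertex reachable along a path still emits at least one arrow; this is exactly what left-non-degeneracy (the bijectivity of $x\star\blank$) guarantees, so the construction never stalls.
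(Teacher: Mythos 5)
Your proof is correct and follows essentially the same route as the paper: the length bound comes from Proposition \ref{prop:Deltas} together with the fact that $\Delta_I$ has length $|I|\le n$, and the strict inclusion comes from building composable paths of arbitrary length using the bijectivity of $x\star\blank$ and $\Aa\neq\emptyset$. One small remark: Proposition \ref{prop:homogeneous-implies-noetherian} asserts Noetherianity, not length-invariance; the fact that length is well defined on $\Cc(\sigma)$ follows directly from the homogeneity of the relations $x|(x\star y)\sim y|(y\star x)$ themselves, which is what the paper implicitly uses.
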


\section{Examples of solutions and their structure categories}  \label{sec:examples}
We now see some examples of braided quivers, obtained by applying Theorem \ref{thm:converse}. Our examples will be \textit{Schurian} quivers, i.e., quivers $\Aa$ such that $|\Aa(\lambda,\mu)|\le 1$ for all pairs of vertices $(\lambda,\mu)$. For a Schurian quiver, we adopt the notation $[\lambda,\mu]$ to signify the unique (if any) arrow $\lambda\to \mu$.
\begin{ex}\label{ex:pres_0}
We consider the Schurian quiver $\Aa$ over $8$ vertices $\Lambda= \{ 1,\dots, 8 \}$, with $24$ arrows: \begin{align*} &[1, 2], [2, 1], [2, 3], [3, 2], [3, 4], [4, 3], [4, 1], [1,
	4], [5, 6], [6, 5], [6, 7], [7, 6],\\ & [7, 8], [8, 7], [8, 5], [5, 8],
	[1, 5], [5, 1], [4, 8], [8, 4], [2, 6], [6, 2], [3, 7], [7, 3];\end{align*} and relations 
\begin{align*}
	&[1, 2][2, 3]\sim [1, 4][4, 3], && [1, 2][2, 6]\sim [1, 5][5, 6], &&
	[1, 4][4, 8]\sim [1, 5][5, 8],\\ & [2, 1][1, 5]\sim [2, 6][6, 5], &&
	[2, 1][1, 4]\sim [2,
	3][3, 4],&& [2, 3][3, 7]\sim [2, 6][6, 7],\\ & [3, 2][2, 1]\sim [3, 4][4, 1],&& [3,
	2][2, 6]\sim [3, 7][7, 6], && [3, 4][4, 8]\sim [3, 7][7, 8],\\ & [4, 1][1, 2]\sim [4,
	3][3, 2],&& [4, 1][1, 5]\sim [4, 8][8, 5],&& [4, 3][3, 7]\sim [4, 8][8, 7],\\
	&[5,
	1][1, 2]\sim [5, 6][6, 2], && [5, 1][1, 4]\sim [5, 8][8, 4],&& [5, 6][6, 7]\sim [5,
	8][8, 7], \\ & [6, 2][2, 1]\sim [6, 5][5, 1], &&
	[6, 2][2, 3]\sim [6, 7][7, 3], && [6,
	5][5, 8]\sim [6, 7][7, 8],\\ & [7, 3][3, 2]\sim [7, 6][6, 2],&& [7, 3][3, 4]\sim [7,
	8][8, 4],&&
	[7, 6][6, 5]\sim [7, 8][8, 5],\\ & [8, 4][4, 1]\sim [8, 5][5, 1],&& [8,
	4][4, 3]\sim [8, 7][7, 3],&& [8, 5][5, 6]\sim [8, 7][7, 6].
\end{align*}

The shape of $\Aa$ is depicted in Figure \ref{fig:ex.pres.0}. All the hypotheses of Theorem \ref{thm:converse} are easily verified for this presentation.
\end{ex}
\begin{figure}[t]
\centering
\begin{tikzpicture}[x=2cm, y=2cm]
	\draw[-Stealth] (0,0) to[bend left=10] (1,0);
	\draw[-Stealth] (1,0) to[bend left=10] (0,0);
	\draw[-Stealth] (0,0) to[bend left=10] (0,1);
	\draw[-Stealth] (0,1) to[bend left=10] (0,0);
	\draw[-Stealth] (0,0) to[bend left=10] (0.3,0.2);
	\draw[-Stealth] (0.3,0.2) to[bend left=10] (0,0);
	\draw[-Stealth] (0.3,0.2) to[bend left=10] (1.3,0.2);
	\draw[-Stealth] (1.3,0.2) to[bend left=10] (0.3,0.2);
	\draw[-Stealth] (0.3,0.2) to[bend left=10] (0.3,1.2);
	\draw[-Stealth] (0.3,1.2) to[bend left=10] (0.3,0.2);
	\draw[-Stealth] (0,1) to[bend left=10] (0.3,1.2);
	\draw[-Stealth] (0.3,1.2) to[bend left=10] (0,1);
	\draw[-Stealth] (1,0) to[bend left=10] (1.3,0.2);
	\draw[-Stealth] (1.3,0.2) to[bend left=10] (1,0);
	\draw[-Stealth] (0,1) to[bend left=10] (1,1);
	\draw[-Stealth] (1,1) to[bend left=10] (0,1);
	\draw[-Stealth] (1,0) to[bend left=10] (1,1);
	\draw[-Stealth] (1,1) to[bend left=10] (1,0);
	\draw[-Stealth] (1,1) to[bend left=10] (1.3,1.2);
	\draw[-Stealth] (1.3,1.2) to[bend left=10] (1,1);
	\draw[-Stealth] (1.3,1.2) to[bend left=10] (1.3,0.2);
	\draw[-Stealth] (1.3,0.2) to[bend left=10] (1.3,1.2);
	\draw[-Stealth] (1.3,1.2) to[bend left=10] (0.3,1.2);
	\draw[-Stealth] (0.3,1.2) to[bend left=10] (1.3,1.2);
\end{tikzpicture}\caption{The quiver in Example \ref{ex:pres_0}.}\label{fig:ex.pres.0}
\end{figure}
Let $\Aa$ be a quiver over $\Lambda$, such that $|\Aa(\lambda,\Lambda)|\le 2$ for all $\lambda\in\Lambda$. In this case, condition \textit{v} from the hypotheses of Theorem \ref{thm:converse} is automatically satisfied. We present two instances of this situation.
\begin{ex}\label{ex:pres_1}
We consider the quiver $\Aa$ with vertices $\Lambda=\{ 1, 2, 3 \}$; arrows \[  [1, 2], [2, 1], [2, 3], [3, 2], [3, 1], [1, 3], \]
see Figure \ref{fig:ex.pres.1}. The category generated by $\Aa$ with relations $[1, 2][2, 1]\sim [1, 3][3, 1],$ $ [2, 3][3,
2]\sim [2, 1][1, 2],$ $[3, 1][1, 3]\sim[3, 2][2, 3]$ satisfies the hypotheses of Theorem \ref{thm:converse}.
\end{ex}
\begin{figure}[t]
\centering
\begin{tikzpicture}[x=2cm, y=2cm]
	\draw[-Stealth] (0,0) to[bend left=10] (1,0);
	\draw[-Stealth] (1,0) to[bend left=10] (0,0);
	\draw[-Stealth] (0,0) to[bend left=10] (0.5,0.86);
	\draw[-Stealth] (0.5,0.86) to[bend left=10] (0,0);
	\draw[-Stealth] (1,0) to[bend left=10] (0.5,0.86);
	\draw[-Stealth] (0.5,0.86) to[bend left=10] (1,0);
\end{tikzpicture}\caption{The quiver in Example \ref{ex:pres_1}.}\label{fig:ex.pres.1}
\end{figure}
\begin{ex}\label{ex:pres_2}
We consider the quiver $\Aa$ with vertices $\Lambda=\{ 1, 2, 3, 4 \}$, and arrows $\{ [1, 2], [2, 1], [2, 3], [3,
2], [3, 4], [4, 3], [4, 1], [1, 4] \}$; see Figure \ref{fig:ex.pres.2}. The category generated by $\Aa$ with relations $[1, 2][2, 3]\sim [1,
4][4, 3],$ $ [2, 3][3, 4]\sim [2, 1][1, 4],$ $ [3, 4][4, 1]\sim[3, 2][2, 1],$ $ [4,
1][1, 2]\sim[4, 3][3, 2]$ satisfies the hypotheses of Theorem \ref{thm:converse}.
\end{ex}
\begin{figure}[t]
\centering
\begin{tikzpicture}[x=2cm, y=2cm]
	\draw[-Stealth] (0,0) to[bend left=10] (1,0);
	\draw[-Stealth] (1,0) to[bend left=10] (0,0);
	\draw[-Stealth] (0,0) to[bend left=10] (0,1);
	\draw[-Stealth] (0,1) to[bend left=10] (0,0);
	\draw[-Stealth] (1,1) to[bend left=10] (1,0);
	\draw[-Stealth] (1,0) to[bend left=10] (1,1);
	\draw[-Stealth] (1,1) to[bend left=10] (0,1);
	\draw[-Stealth] (0,1) to[bend left=10] (1,1);
\end{tikzpicture}\caption{The quiver in Example \ref{ex:pres_2}.}\label{fig:ex.pres.2}
\end{figure}
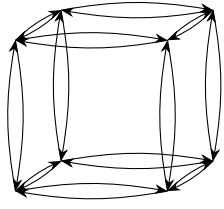

\section{Solutions of principal homogeneous type}\label{sec:PH-type} 
\noindent We now apply our theory to a special class of 
(quiver-theoretic) YBMs, the solutions of \emph{principal homogeneous type} (\emph{PH type}, for short). Involutive non-degenerate YBMs of PH type will produce structure groupoids, which in turn yield examples of Garside groupoids by Theorem \ref{teo:Garside-structure} and Propositions \ref{prop:is-Ore}, \ref{lemma:Gsigma_is_Env}. At the end of this section, we shall consider explicit examples of YBMs of PH type: our examples will not only be solutions, but also \emph{braidings} on groupoids.
\subsection{Principal homogeneous groupoids}\label{subsec:PH-groupoids}  We begin by introducing solutions of \emph{principal homogeneous type}: these are solutions defined on a complete groupoid of degree 1, which is equivalent to a \emph{groupoid of pairs}; see e.g.\@ \cite[Example 1.11]{introGroupoids}. We moreover recall the notion of braided groupoid, and prove that the datum of a group is equivalent to the datum  of a braided groupoid of pairs with a distinguished vertex.
\begin{defin}\label{def:weak-morph}
A \textit{weak morphism of quivers} $f\colon Q\to R$, where $Q$ and $R$ have possibly different sets of vertices, is a datum of a map of sets $f^1\colon Q\to R$ and a map of sets $f^0 \colon \Obj(Q)\to \Obj(R)$, satisfying $\source_R(f^1(x)) = f^0(\source_Q(x))$, $\target_R(f^1(x)) = f^0(\target_Q(x))$ for any $x\in Q$.
Here,  $\Obj(Q)$ means the set of all vertices of the quiver $Q$.

A \emph{weak morphism} 
$f: \Gg\to\Hh$ \emph{of groupoids}, between two groupoids $\Gg, \Hh$ with possibly different sets of vertices, is a weak morphism $(f^1, f^0)$ between the underlying quivers, satisfying moreover $f^1(a\cdot_{\Gg} b) = f^1(a)\cdot_{\Hh} f^1 (b)$ for all $a,b \in \Gg$.
\end{defin}
We denote by $\Cat{Quiv}$ the category of all quivers, regardless of their sets of vertices, equipped with weak morphisms.
\begin{rem}
Notice that a morphism in $\Cat{Quiv}_\Lambda$ is a weak morphism $f = (f^1, f^0)$ with $f^0 = \id[\Lambda]$.
\end{rem}
\begin{defin}
Let $\Lambda$ be a nonempty set, $n$ any cardinal (possibly infinite). A \textit{complete quiver of degree $n$} over $\Lambda$ is a quiver $Q$ over $\Lambda$ such that, for all $a,b\in \Lambda$ (not necessarily distinct), there exist exactly $n$ elements $q\in Q$ with $\source (q)=a$ and $\target(q)=b$. 

A complete quiver of degree $1$ will be called here a \emph{principal homogeneous} (\emph{PH}) \emph{groupoid}, following the nomenclature of \cite{matsumotoshimizu}; and the
category of principal homogeneous groupoids, endowed with weak morphisms, is here denoted by $\mathsf{PHG}$. Here, for all $a,b\in \Lambda$, there exists a unique arrow with source $a$ and target $b$, which we denote by $[a,b]$. For a sequence $a_1,\dots, a_n$ of vertices, the notation $[a_1,\dots, a_n]$ denotes the path $[a_1, a_2] \,|\, [a_2, a_3]\,|\, \dots \,|\, [a_{n-1}, a_n]$. The empty path on $a$ is denoted by $[a]$ (notice that $[a]$ is not an edge of any groupoid of pairs, and $[a]\neq [a,a]$).
\end{defin}
\begin{rem}
Let $\Gg$ be in $\mathsf{PHG}$. Then $\Gg$ is indeed a groupoid, with a \emph{unique} groupoid operation: namely, the operation $\cdot$ defined by $[a,b]\cdot [b,c]= [a,c]$, having units given by the loops $[a,a]$. The multiplication $\cdot$ will also be denoted by $m \colon \Gg\otimes \Gg\to \Gg$, whenever this notation comes most in handy. We denote the inverse of $x\in \Gg$ by $x^{-1}$.
\end{rem}
\begin{defin}[cf.\@ \cite{matsumotoshimizu, shibukawa2007invariance}]\label{PH-type} A (quiver-theoretic)
YBM $\sigma\colon \Gg\otimes \Gg\to \Gg\otimes \Gg$ on a PH groupoid $\Gg$ will be called a solution \emph{of principal homogeneous (PH) type}.
\end{defin}
\begin{rem}\label{rem:blow-up}
Given a set $X$, its \textit{groupoid of pairs} $\hat{X}$ is defined as the PH groupoid on the set of vertices $X$, with the set of arrows given by $X\times X$, and the source and target maps are the projections on the first and the second factor respectively; see e.g.\@ \cite[Example 1.11]{introGroupoids}. Given a map of sets $f\colon X\to Y$, we define $\hat{f} = (\hat{f}^1, \hat{f}^0)\colon \hat{X}\to \hat{Y}$ by $\hat{f}^0 = f\colon X\to Y$, and $\hat{f}^1([x,y])= [f(x), f(y)]$. It is clear that $\hat{f}$ is a weak morphism of quivers. This defines a functor $\hat{(\blank)}\colon \mathsf{Set}\to \mathsf{Quiv}$. 

On the other hand, we can consider a quiver $Q$ and take its set of vertices $\Obj(Q)$. This defines a functor $\Obj\colon \mathsf{Quiv} \to \mathsf{Set}$, where the image of the morphism $g = (g^1, g^0)$: $Q \to R$ under the functor $\Obj$ is defined as the map $g^0$.

Then, $\mathsf{Set}$ is equivalent to the category $\mathsf{PHG}$, via the two functors $\hat{(\blank)}|^{\mathsf{PHG}}$ and $\Obj|_{\mathsf{PHG}}$. Indeed, a morphism $g$ between two PH groupoids is uniquely described by $g^0$, thus it is easy to check that $\Obj|_{\mathsf{PHG}}\circ \hat{(\blank)}|^{\mathsf{PHG}} = \id[\mathsf{Set}]$, while $\hat{(\blank)}|^{\mathsf{PHG}}\circ\Obj|_{\mathsf{PHG}}$ is canonically isomorphic to $\id[\mathsf{PHG}]$.
\end{rem}
We recall the following definition from Andruskiewitsch \cite{andruskiewitsch2005quiver}. Here, for a groupoid $\Gg$ and a map $f$ defined on the tensor product $\Gg^{\otimes 2}$ (defined as the tensor product of the underlying quivers), 
the well-established notation
$f_{i\;i+1}$ $(i=1, 2)$ means the map on $\Gg^{\otimes 3}$ defined by applying $f$ on the $i$-th and 
($i+1$)-st component, and applying the identity on the other tensor component.
\begin{defin}\label{braided_groupoid}
A \emph{braided groupoid} is the datum of a groupoid $\Gg$, with vertices $\Lambda= \Obj(\Gg)$, a multiplication given by a morphism $m\colon \Gg\otimes \Gg\to \Gg$ of quivers over $\Lambda$, and a family of units $\set{\mathbf{1}_\lambda}_{\lambda\in\Lambda}$; and of an isomorphism $\sigma\colon \Gg\otimes \Gg\to \Gg\otimes \Gg$ of quivers over $\Lambda$, called a \emph{braiding}, written as $\sigma(x,y) = (x\rightharpoonup y, x\leftharpoonup y)$,  satisfying the following properties for all $x,y,z$ such that the path $x|y|z$ is defined:
\begin{align}
	&\label{bg1}\tag{BG1}\sigma(x, \mathbf{1}_{\target(x)})=(\mathbf{1}_{\source(x)}, x);\\
	&\label{bg2}\tag{BG2}\sigma(\mathbf{1}_{\source(x)}, x)= (x, \mathbf{1}_{\target(x)});\\
	&\label{bg3}\tag{BG3}\sigma \circ m_{23} = m_{12}\circ \sigma_{23}\circ \sigma_{12},\\\nonumber &\text{i.e.\@ }x\rightharpoonup yz = (x\rightharpoonup y)((x\leftharpoonup y)\rightharpoonup z)\text{ and }x\leftharpoonup(yz)=(x\leftharpoonup y)\leftharpoonup z;\\
	&\label{bg4}\tag{BG4} \sigma \circ m_{12} = m_{23}\circ \sigma_{12}\circ \sigma_{23},\\&\nonumber \text{i.e.\@ }xy\leftharpoonup z = (x\leftharpoonup(y\rightharpoonup z))(y\leftharpoonup z)\text{ and }(xy)\rightharpoonup z=x\rightharpoonup(y\rightharpoonup z);\\
	&\label{braided-commutative}\tag{BG5}m\circ \sigma = m.
\end{align}
\end{defin}
\begin{rem}
A morphism $\sigma\colon \Gg\otimes\Gg\to\Gg\otimes\Gg$ of quivers over $\Lambda$ satisfying \eqref{bg1}--\eqref{braided-commutative} is necessarily invertible. Indeed (with the same proof as in \cite[Theorem 1]{LYZ}), let $x|y = \sigma(u|v) = (u\rightharpoonup v)|(u\leftharpoonup v)$. The arrows $u\rightharpoonup (vv^{-1})$ and $u\leftharpoonup (vv^{-1})$ are well-defined, since $vv^{-1} = \mathbf{1}_{\target(u)}$, and from \eqref{bg1} we get $u\leftharpoonup (vv^{-1}) = u$. Thus $y\leftharpoonup v^{-1} = (u\leftharpoonup v)\leftharpoonup v^{-1} = u$, whence
\[ (y\rightharpoonup v^{-1}) u =  (y\rightharpoonup v^{-1})(y\leftharpoonup v^{-1})  \overset{\eqref{braided-commutative}}{=} yv^{-1} \overset{\eqref{braided-commutative}}{=} x^{-1} u, \]
which, by cancelling $u$ on both sides, implies $v^{-1} = y^{-1}\rightharpoonup x^{-1}$. This allows us to retrieve $v$ from $x$ and $y$. At this point, using \eqref{braided-commutative}, it is also immediate to retrieve $u$ as $u= xyv^{-1}= xy (y^{-1}\rightharpoonup x^{-1})$. This proves that $\sigma$ is invertible. We invite the reader to keep track of the sources and targets, and check that all the above operations are well-defined.
\end{rem}
Because of \eqref{bg1} and \eqref{bg3}, $\leftharpoonup$ is a right action $\Gg\otimes \Gg\to \Gg$; and because of \eqref{bg2} and \eqref{bg4},
$\rightharpoonup$ is a left action $\Gg\otimes \Gg\to \Gg$, in the sense of \cite{andruskiewitsch2005quiver}. 

Condition \eqref{braided-commutative} is called the \emph{braided-commutativity} of $m$ with respect to $\sigma$. A graphical interpretation of  \eqref{bg3} and \eqref{bg4} is given in Figure \ref{fig:hex_prism}.
\begin{figure}[t]
\centering
\begin{subfigure}{0.4\linewidth}\centering
	\begin{tikzpicture}[x=1.5cm, y=1cm]
		\draw[-Stealth] (-1.5,4) to node[above]{$x\rightharpoonup y$} (1.5,4);
		\draw[-Stealth] (-1.5,4) to node[sloped,below]{$\;(x\rightharpoonup y) ((x\leftharpoonup y)\rightharpoonup z) $} (0,3);
		\draw[-Stealth] (-1.5,4) to node[sloped,above]{$x$} (-1.5,1);
		\draw[-Stealth] (-1.5,1) to node[sloped, above]{$y\quad \quad\qquad \qquad$} (1.5,1);
		\draw[-Stealth] (1.5,4) to node[sloped,above]{$(x\leftharpoonup y)\rightharpoonup z\quad\;\; $} (0,3);
		\draw[-Stealth] (0,3) to node[sloped,below]{$\!\!x\leftharpoonup y z$} (0,0);
		\draw[-Stealth] (-1.5,1) to node[sloped,below]{$y z$} (0,0);
		\draw[-Stealth] (1.5,1) to node[sloped,above]{$z$} (0,0);
		\draw[-Stealth] (1.5,4) to node[sloped, below]{$x\leftharpoonup y$} (1.5,1);
		\node[] at (0,-0.65) {};
	\end{tikzpicture}\caption{The axiom \ref{bg3}}\end{subfigure}\begin{subfigure}{0.4\linewidth}\centering\begin{tikzpicture}[x=1.5cm, y=1cm]
		\draw[-Stealth] (-1.5,4) to node[above]{$x$} (1.5,4);
		\draw[-Stealth] (-1.5,4) to node[sloped,below]{$x y $} (0,3);
		\draw[-Stealth] (-1.5,4) to node[sloped,above]{$x y\rightharpoonup z$} (-1.5,1);
		\draw[-Stealth] (-1.5,1) to node[sloped, above]{$ x\leftharpoonup (y\rightharpoonup z)\qquad\qquad\qquad $} (1.5,1);
		\draw[-Stealth] (1.5,4) to node[sloped,below]{$ y $} (0,3);
		\draw[-Stealth] (0,3) to node[sloped,above]{$ z$} (0,0);
		\draw[-Stealth] (-1.5,1) to node[sloped,below]{$(x\leftharpoonup (y\rightharpoonup z)) (y\leftharpoonup z) $} (0,0);
		\draw[-Stealth] (1.5,1) to node[sloped,above]{$y\leftharpoonup z$} (0,0);
		\draw[-Stealth] (1.5,4) to node[sloped, above]{$y\rightharpoonup z$} (1.5,1);
	\end{tikzpicture}\caption{ The axiom \ref{bg4}}\end{subfigure}
\caption{The axioms \ref{bg3} and \ref{bg4} as the closure of a prism.}\label{fig:hex_prism}
\end{figure}
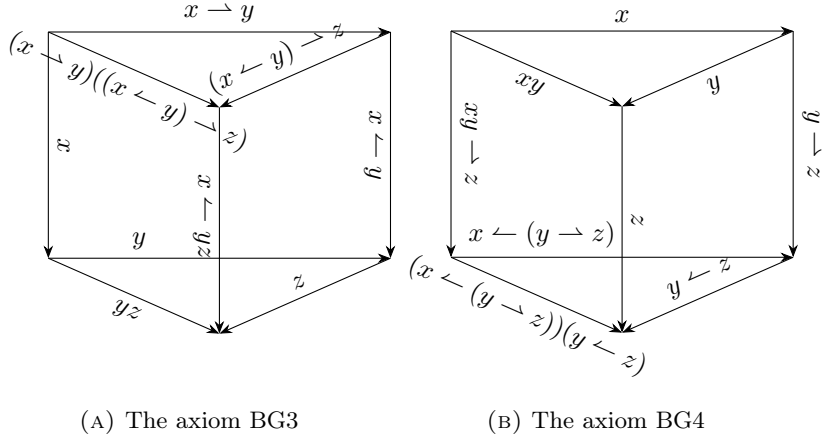

It is well known that braidings $\sigma$ on groupoids are solutions to the YBE \cite{andruskiewitsch2005quiver}. The solution corresponding to a braided groupoid is left- and right-non-degenerate; for example, if $x\rightharpoonup y=z$, then $y=x^{-1}\rightharpoonup z$,
because $\rightharpoonup$ is a left action.

In the principal homogeneous case, an arrow of $\Gg$ is uniquely determined by its source and its target. Therefore, $\sigma\colon \Gg\otimes\Gg\to\Gg\otimes\Gg$ is uniquely determined by a ternary operation $\langle\blank,
\blank,\blank\rangle\colon \Lambda\times\Lambda\times\Lambda\to\Lambda$, by imposing
\begin{equation}\label{eq:heap}\sigma[a,b,c] = [a,\langle a,b,c\rangle, c].\end{equation}
\begin{prop}[{\cite[Theorem 3.2 and Proposition 7.1]{shibukawa2007invariance}}]\label{prop:ternary-op-and-YBM}A map $\sigma$ defined as in \eqref{eq:heap} is a YBM on $\Gg$ if and only if the ternary operation satisfies
\begin{align*}
	\langle a, \langle a,b,c\rangle , \langle \langle a,b,c\rangle, c, d\rangle  \rangle = &\;\langle a, b, \langle b,c,d\rangle\rangle,\\
	\langle  \langle a, b, \langle b,c,d\rangle\rangle, \langle b,c,d\rangle, d\rangle =& \;\langle \langle a,b,c\rangle, c, d\rangle,
\end{align*}
for all $a,b,c\in\Lambda = \Obj(\Gg)$. The map $\sigma$ is involutive if and only if 
\begin{equation}\label{eq:ternary-op-involutivity}\langle a,\langle a,b,c\rangle, c\rangle = b \end{equation}
holds for all $a,b,c\in\Lambda$. It is non-degenerate if and only if the equation $\langle a,b,c\rangle = b'$ can be solved uniquely in the variable $a$ for all $b,b',c \in\Lambda$; and be solved uniquely in the variable $c$ for all $a,b,b'\in\Lambda$.
\end{prop}

The following definition appeared in Pr\"ufer \cite{prufer1924theorie}, then (without the assumption of abelianity) in Baer \cite{baer1929einfuhrung}, and was generalised by Wagner \cite{wagner1953}.
\begin{defin}
A \emph{heap}\footnote{The original German name was \textit{(die) Schar}, plur.\@ \textit{Scharen}, which translates better as ``crowd'', ``herd'', or ``flock''. In Russian (e.g.\@ in the works of Wagner \cite{wagner1953}), it was translated as груда. For the English name currently in use, we refer to Hollings and Lawson \cite{hollings2017wagner}, and Brzezi\'nski \cite{BRZEZINSKITrussesParagons}. Notably, among the other English translations proposed or used, there was also \textit{principal homogeneous space} (see \cite{hollings2017wagner}), which foreshadows our correspondence in Theorem \ref{ternary_operation_and_braiding}.} is the datum of a set $\Lambda$, and a ternary operation $\langle\blank,\blank,\blank\rangle$ on $\Lambda$ satisfying, for all $a,b,c,d\in \Lambda$:
\begin{align}
	&\label{maltsev_1}\tag{M1} \langle a,b,b\rangle = a,\\
	&\label{maltsev_2}\tag{M2} \langle a,a,b\rangle = b,\\
	&\label{associativity}\tag{A} \langle a,b,\langle c,d,e\rangle\rangle = \langle \langle a,b,c\rangle, d,e\rangle.
\end{align}
A heap is called \emph{abelian} if $\langle a,b,c\rangle = \langle c,b,a\rangle$ for all $a,b,c\in\Lambda$. 
\end{defin}
Conditions \eqref{maltsev_1} and \eqref{maltsev_2} are called \emph{Mal'tsev conditions}, while \eqref{associativity} is called the \emph{associativity condition}. The reader interested in how these conditions arose, and were integrated in the theory of \emph{Mal'tsev categories}, may refer to \cite{BournGranJacqmin, carboni1991Maltsev, MaltsevOriginal} and references therein. A \emph{pointed heap}, i.e.\@ a heap with a distinguished element, is the same thing as a group; see Baer \cite{baer1929einfuhrung}, or \cite[Lemma 2.1]{BRZEZINSKITrussesParagons} for a contemporary formulation. The idea of using heaps as an ``affine version'' of groups was applied in several places; see e.g.\@ 
\cite{baer1929einfuhrung,breaz2024heaps,BRZEZINSKITrussesParagons,brzezinski2024affgebras,prufer1924theorie,wagner1953}.

\begin{rem}\label{rem:weakheap}
The associativity condition implies
\begin{align}
	\label{associativity_1}\tag{A1}&\langle a,b,d\rangle = \langle \langle a,b,c\rangle, c, d\rangle,\\
	\label{associativity_2}\tag{A2}&\langle a,c,d\rangle = \langle a, b, \langle b,c,d\rangle\rangle.
\end{align}
This follows immediately from \eqref{associativity} and the Mal'tsev conditions. Conversely, \eqref{associativity_1} and \eqref{associativity_2} together with the Mal'tsev conditions imply \eqref{associativity} by an immediate computation; see e.g.\@ \cite[Proposition 7]{BournGranJacqmin}.
\end{rem}
\begin{lem}\label{ternary_operation_bijective}
Let $(\Gg,\sigma)$ be a braided PH groupoid, associated with the ternary operation $\langle\blank,\blank,\blank\rangle$ on $\Lambda = \Obj(\Gg)$. Then:
\begin{enumerate}
	\item[\textit{i.}] The map $\langle a,b,\blank\rangle$ is invertible for all $a,b\in\Lambda$, and the inverse is $\langle b,a,\blank\rangle$.
	\item[\textit{ii.}] The map $\langle \blank, b,c\rangle$ is invertible for all $b,c\in\Lambda$, and the inverse is $\langle \blank, c,b\rangle$.
	\item[\textit{iii.}] If $\sigma$ is moreover involutive, then $\langle\blank,b,c\rangle$ has also the inverse $\langle b,c,\blank \rangle$; thus $\langle c,b,\blank\rangle = \langle \blank, b,c\rangle$ holds.
\end{enumerate}
\end{lem}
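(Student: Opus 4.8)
The plan is to read (i) and (ii) straight off the action structure of the pre-braiding, and to obtain (iii) from involutivity by passing through the heap structure carried by $\langle\blank,\blank,\blank\rangle$.

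For (i) I would fix $a,b\in\Lambda$ and look at the arrow $[a,b]$. Unwinding \eqref{eq:heap}, for every $c$ one has $[a,b]\rightharpoonup[b,c]=[a,\langle a,b,c\rangle]$, so that, identifying $\Gg(b,\Lambda)$ and $\Gg(a,\Lambda)$ with $\Lambda$ via their targets, the map $[a,b]\rightharpoonup\blank$ becomes exactly $\langle a,b,\blank\rangle$. Since (as recorded just after Definition \ref{braided_groupoid}) $\rightharpoonup$ is a left action and $\Gg$ is a groupoid, $[a,b]\rightharpoonup\blank$ is a bijection whose inverse is $[a,b]^{-1}\rightharpoonup\blank=[b,a]\rightharpoonup\blank$; under the same identification this is $\langle b,a,\blank\rangle$, which is (i). Part (ii) is the exact dual: for $y=[b,c]$ the map $\blank\leftharpoonup y$ sends $[a,b]\mapsto[\langle a,b,c\rangle,c]$, i.e.\@ it is $\langle\blank,b,c\rangle$ read on sources, and since $\leftharpoonup$ is a right action its inverse is $\blank\leftharpoonup[c,b]=\langle\blank,c,b\rangle$.

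For (iii), note first that by (i) and (ii) the map $\langle\blank,b,c\rangle$ is invertible with inverse $\langle\blank,c,b\rangle$; hence it suffices to prove the symmetry $\langle p,q,r\rangle=\langle r,q,p\rangle$, since this gives both $\langle b,c,\blank\rangle=\langle\blank,c,b\rangle=\langle\blank,b,c\rangle^{-1}$ and, after relabelling, the stated identity $\langle c,b,\blank\rangle=\langle\blank,b,c\rangle$. To reach the symmetry I would first check that $\langle\blank,\blank,\blank\rangle$ is a heap, translating the braiding axioms through \eqref{eq:heap} and using $[a,a]=\mathbf{1}_a$: axiom \eqref{bg2} on the path $[a,a]\,|\,[a,b]$ gives \eqref{maltsev_2}, axiom \eqref{bg1} on $[a,b]\,|\,[b,b]$ gives \eqref{maltsev_1}, the second (associativity) half of \eqref{bg3} evaluated on $[a,b]\,|\,[b,c]\,|\,[c,d]$ gives \eqref{associativity_1}, and the second half of \eqref{bg4} gives \eqref{associativity_2}; by Remark \ref{rem:weakheap} these four identities yield \eqref{associativity}, so $\langle\blank,\blank,\blank\rangle$ is a heap. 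The key step is then the observation that, in any heap, the middle-slot map $\langle a,\blank,c\rangle$ is a bijection with inverse $\langle c,\blank,a\rangle$ (a consequence of \eqref{associativity} with the Mal'tsev laws; equivalently, after fixing a basepoint and using that a pointed heap is a group \cite[Lemma 2.1]{BRZEZINSKITrussesParagons}, one has $\langle a,x,c\rangle=ax^{-1}c$, whose inverse is $y\mapsto cy^{-1}a=\langle c,y,a\rangle$). The involutivity hypothesis, in the form \eqref{eq:ternary-op-involutivity}, says precisely that $\langle a,\blank,c\rangle$ is its own inverse; comparing the two, $\langle c,\blank,a\rangle=\langle a,\blank,c\rangle$, i.e.\@ $\langle a,x,c\rangle=\langle c,x,a\rangle$ for all $x$, which is exactly the required symmetry. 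This proves (iii).

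The main obstacle is (iii): parts (i)--(ii) are purely formal once the two actions are identified, whereas (iii) is in substance the assertion that, for a principal homogeneous solution, involutivity is equivalent to commutativity of the underlying heap. The work is therefore concentrated in extracting the heap (equivalently, the group) structure from the braiding axioms, after which the short involution argument applies immediately; none of this is needed for (i) and (ii).
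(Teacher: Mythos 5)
Your proof is correct, and parts (i)--(ii) are essentially the paper's own argument: both rest on the observation that, under the identification of arrows with their targets (resp.\ sources), the maps $[a,b]\rightharpoonup\blank$ and $\blank\leftharpoonup[b,c]$ become $\langle a,b,\blank\rangle$ and $\langle\blank,b,c\rangle$, and that these are invertible because $\rightharpoonup$ is a left action and $\leftharpoonup$ a right action by invertible arrows of a groupoid. Where you genuinely diverge is (iii). The paper stays inside the braiding formalism: applying involutivity to the path identity $\sigma(x^{-1}\,|\,x\rightharpoonup y)=y\,|\,(x\leftharpoonup y)^{-1}$ established for (i) gives $\langle b,c,\langle a,b,c\rangle\rangle=a$, a second computation from \eqref{i1} and the left-action property gives $\langle\langle b,c,a\rangle,b,c\rangle=a$, and combining these two with (i) yields both claims. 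You instead extract the full heap structure from \eqref{bg1}--\eqref{bg4} (thereby anticipating the correspondence the paper only establishes later, in Theorem \ref{ternary_operation_and_braiding}), invoke the classical fact that in a heap the middle-slot map $\langle a,\blank,c\rangle$ is inverse to $\langle c,\blank,a\rangle$ (via Baer's pointed-heap/group correspondence, \cite[Lemma 2.1]{BRZEZINSKITrussesParagons}), and note that involutivity, in the form \eqref{eq:ternary-op-involutivity}, makes this map an involution, forcing the symmetry $\langle a,x,c\rangle=\langle c,x,a\rangle$, from which both statements of (iii) follow formally. Your route is not circular---the heap axioms are derived only from \eqref{bg1}--\eqref{bg4}, Remark \ref{rem:weakheap} and Proposition \ref{prop:ternary-op-and-YBM}, all available before the lemma---and it is more conceptual: it isolates the equivalence ``involutive pre-braiding $\Leftrightarrow$ abelian heap'', which the paper only records afterwards via Corollary \ref{ternary_operation_and_left_quasigroup}, and it proves slightly more than (iii) asks (full symmetry of the ternary operation). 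What the paper's computation buys in exchange is self-containedness: it needs neither the heap axioms nor the external correspondence from \cite{BRZEZINSKITrussesParagons}, only two short manipulations with the action axioms.
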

\begin{proof}
Let $x = [a,b], y=[b,c]$. One has
\begin{equation}\label{eq:yields-associativity}\begin{split}
		\sigma(x^{-1}\;|\;x\rightharpoonup y) &= x^{-1}\rightharpoonup (x\rightharpoonup y) \;|\; x^{-1}\leftharpoonup (x\rightharpoonup y)\\
		&= y\;|\;x^{-1}\leftharpoonup (x\rightharpoonup y)\\
		&\hspace{-1.3pt}\overset{(\dagger)}{=} y\;|\;(x\leftharpoonup y)^{-1},
\end{split}\end{equation}
where the equality marked with $(\dagger)$ follows from 
\[
(x^{-1}\leftharpoonup (x\rightharpoonup y)) (x\leftharpoonup y) \overset{\eqref{bg4}}{=} (x^{-1} x)\leftharpoonup y = 1\leftharpoonup y \overset{\eqref{bg2}}= 1.
\]
Since \eqref{eq:yields-associativity} is an equation on paths, it implies the equality of the middle vertices: thus we get $c = \langle b,a,\langle a,b,c\rangle\rangle$, which proves \textit{i}. The proof of \textit{ii} is analogous. If $\sigma$ is involutive, \eqref{eq:yields-associativity} moreover implies $\sigma(y\;|\;(x\leftharpoonup y)^{-1}) = x^{-1}\;|\; x\rightharpoonup y$, whence $a = \langle b,c,\langle a,b,c\rangle\rangle$, as desired. 
On the other hand, $y=(y\rightharpoonup(xy)^{-1})\rightharpoonup(y\leftharpoonup(xy)^{-1})$ by \eqref{i1}. Because $\rightharpoonup$ is a left action, we get $(y\rightharpoonup(xy)^{-1})^{-1}\rightharpoonup y=y\leftharpoonup(xy)^{-1}$, and $\langle\langle b, c, a\rangle, b, c\rangle=a$ as a result.
Therefore, one has $\langle b,c,\blank\rangle = \langle \blank,b,c\rangle^{-1}$, but also $\langle b,c,\blank\rangle = \langle c,b,\blank\rangle^{-1}$, whence $\langle c,b,\blank\rangle = \langle \blank, b,c\rangle$. This concludes the proof of \textit{iii}.
\end{proof}

From the previous lemma and Proposition \ref{prop:ternary-op-and-YBM}, the braiding of every braided PH groupoid is always non-degenerate.
\begin{prop}[{see e.g.\@ \cite[Lemma 2.1]{BRZEZINSKITrussesParagons}}]\label{lem:group_iff_pointed_heap}
Let $\Lambda$ be a set. The following data are equivalent:
\begin{enumerate}
	\item[\textit{i.}] A group operation $*$ on $\Lambda$.
	\item[\textit{ii.}] A heap structure on $\Lambda$, and a distinguished element $u\in\Lambda$.\end{enumerate}
Via this correspondence, abelian groups correspond to ternary operations satisfying \eqref{eq:ternary-op-involutivity}.\end{prop}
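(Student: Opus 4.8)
The plan is to exhibit the two constructions explicitly and to check that they are mutually inverse; the proof is a routine (if slightly bookkeeping-heavy) verification, with no serious obstacle beyond keeping track of inverses.

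For the implication \textit{i.}\@ $\Rightarrow$ \textit{ii.}, given a group operation $*$ on $\Lambda$ with identity $e$ and inverse $a\mapsto a^{-1}$, I would set $\langle a,b,c\rangle := a * b^{-1} * c$ and take $u := e$ as the distinguished element. Each heap axiom is then a one-line check: \eqref{maltsev_1} reads $a * b^{-1} * b = a$, \eqref{maltsev_2} reads $a * a^{-1} * b = b$, and the associativity condition \eqref{associativity} follows from associativity of $*$, since both sides equal $a * b^{-1} * c * d^{-1} * e$. Conversely, for \textit{ii.}\@ $\Rightarrow$ \textit{i.}, given a heap $\langle\blank,\blank,\blank\rangle$ and a distinguished element $u$, I would define $a * b := \langle a,u,b\rangle$. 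Then $u$ is a two-sided identity by \eqref{maltsev_1} and \eqref{maltsev_2}; associativity of $*$ is exactly \eqref{associativity}; and the two-sided inverse of $a$ is $\langle u,a,u\rangle$, which follows from \eqref{associativity} together with the Mal'tsev conditions, e.g.\@ $a * \langle u,a,u\rangle = \langle a,u,\langle u,a,u\rangle\rangle = \langle\langle a,u,u\rangle, a, u\rangle = \langle a,a,u\rangle = u$.

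To close the equivalence I would verify that the two passages are mutually inverse. Starting from a group, the heap $\langle a,b,c\rangle = a*b^{-1}*c$ with $u=e$ returns the operation $\langle a,e,b\rangle = a*b$. Starting from a heap with distinguished $u$, the group operation $a*b = \langle a,u,b\rangle$ has identity $u$ and inverse $b\mapsto\langle u,b,u\rangle$, and the re-induced ternary operation collapses back to the original: $a * b^{-1} * c = \langle\langle a,b,u\rangle, u, c\rangle = \langle a,b,c\rangle$, using \eqref{associativity} and the Mal'tsev conditions twice (once to reduce $a*b^{-1} = \langle a,b,u\rangle$, once to absorb the trailing $c$).

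For the final assertion about abelian groups, I would transport \eqref{eq:ternary-op-involutivity} through the correspondence. Writing $\langle a,b,c\rangle = a*b^{-1}*c$, the condition $\langle a,\langle a,b,c\rangle,c\rangle = b$ becomes $a * c^{-1} * b * a^{-1} * c = b$ for all $a,b,c$. Specialising $a=u$ gives $c^{-1}*b*c = b$, i.e.\@ $b*c = c*b$, so \eqref{eq:ternary-op-involutivity} forces commutativity; conversely, if $*$ is abelian then $a*c^{-1}*b*a^{-1}*c = c^{-1}*a*a^{-1}*b*c = b$, so the identity holds. Hence \eqref{eq:ternary-op-involutivity} holds precisely when $*$ is abelian, which is the stated correspondence. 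The only point requiring care is the inverse-tracking in the mutual-inverse check; everything else is direct substitution into the heap axioms.
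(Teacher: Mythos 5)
Your proof is correct and takes essentially the same route as the paper: the correspondence $\langle a,b,c\rangle = a*b^{-1}*c$ with $u$ the group unit, and conversely $a*b=\langle a,u,b\rangle$, is exactly the one the paper records. The paper offers no verification at all (it cites \cite[Lemma 2.1]{BRZEZINSKITrussesParagons} and merely states the two formulas), so your routine checks---the mutual-inverse computations and the observation that \eqref{eq:ternary-op-involutivity} transports to $a*c^{-1}*b*a^{-1}*c=b$, which is equivalent to commutativity---supply precisely the details the paper delegates to the reference.
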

The correspondence is as follows. A group operation $*$ on $\Lambda$ defines a heap structure on $\Lambda$ by
$\langle a, b, c\rangle=a*b^{-*}*c$ $(a, b, c\in\Lambda)$. Here, $b^{-*}$ is the inverse of $b$ with respect to the group operation $*$. As the distinguished element $u$, we take the unit of the group $\Lambda$. Conversely, a heap structure on $\Lambda$ with distinguished element $u\in\Lambda$ can define a group structure $*$ on $\Lambda$ whose unit is $u$ by
$a*b=\langle a, u, b\rangle$ $(a, b\in\Lambda)$.

The following is the main result in this section:
\begin{thm}\label{ternary_operation_and_braiding} Let $\Lambda$ be a set, and we denote by $\Gg = \hat{\Lambda}$ the corresponding groupoid of pairs (Remark \ref{rem:blow-up}). The following data are equivalent:
\begin{enumerate}
	\item[\textit{i.}] A heap structure on $\Lambda$.\item[\textit{ii.}] A braiding $\sigma$ on $\Gg$.
\end{enumerate}
Via this correspondence, ternary operations satisfying \eqref{eq:ternary-op-involutivity} correspond to involutive braidings.
\end{thm}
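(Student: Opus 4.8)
The plan is to use the bijection \eqref{eq:heap} between morphisms $\sigma$ of $\Cat{Quiv}_\Lambda$ on $\Gg = \hat{\Lambda}$ and ternary operations $\langle\blank,\blank,\blank\rangle$ on $\Lambda$, and to translate each of the pre-braiding axioms \eqref{bg1}--\eqref{braided-commutative} into an identity on the ternary operation. Since in the groupoid of pairs every finite sequence of vertices is automatically a composable path, each axiom---quantified over all composable paths $x|y|z$---translates into an identity quantified over all tuples of elements of $\Lambda$, with no well-definedness constraints to check. The whole equivalence then reduces to matching the resulting list of identities against the heap axioms, using Remark \ref{rem:weakheap}.

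Concretely, writing $x = [a,b]$, $y = [b,c]$, $z = [c,d]$, formula \eqref{eq:heap} gives $x\rightharpoonup y = [a,\langle a,b,c\rangle]$ and $x\leftharpoonup y = [\langle a,b,c\rangle, c]$. I would first observe that \eqref{braided-commutative} is automatic: since $\sigma$ preserves source and target, $m(\sigma[a,b,c]) = [a,c] = m[a,b,c]$ for every ternary operation. Next, evaluating \eqref{bg1} on $[a,b,b]$ and \eqref{bg2} on $[a,a,b]$ yields precisely $\langle a,b,b\rangle = a$ and $\langle a,a,b\rangle = b$, i.e.\ the Mal'tsev conditions \eqref{maltsev_1} and \eqref{maltsev_2}.

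Then I would carry out the two prism computations. Expanding the first component of \eqref{bg3}, namely $x\rightharpoonup(yz) = (x\rightharpoonup y)\big((x\leftharpoonup y)\rightharpoonup z\big)$, and composing the arrows on the right, both sides become arrows out of $a$ whose common target forces $\langle a,b,d\rangle = \langle\langle a,b,c\rangle, c, d\rangle$; the second component of \eqref{bg3} gives the same identity. This is exactly \eqref{associativity_1}. Dually, expanding either component of \eqref{bg4} gives $\langle a,c,d\rangle = \langle a,b,\langle b,c,d\rangle\rangle$, which is \eqref{associativity_2}. By Remark \ref{rem:weakheap}, the identities \eqref{maltsev_1}, \eqref{maltsev_2}, \eqref{associativity_1}, \eqref{associativity_2} together are equivalent to the heap axioms \eqref{maltsev_1}, \eqref{maltsev_2}, \eqref{associativity}. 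Thus $\sigma$ is a pre-braiding if and only if $\langle\blank,\blank,\blank\rangle$ is a heap operation, establishing the equivalence of \textit{i} and \textit{ii}.

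Finally, for the addendum: by Proposition \ref{prop:ternary-op-and-YBM}, $\sigma$ is involutive precisely when \eqref{eq:ternary-op-involutivity} holds, and an involutive $\sigma$ is its own inverse, hence an isomorphism---so an involutive pre-braiding is automatically a braiding (and every pre-braiding on a PH groupoid is already non-degenerate by Lemma \ref{ternary_operation_bijective}). The main work is the bookkeeping in the two prism computations of the previous paragraph; the only subtlety is keeping track of which vertex plays which role in $x\rightharpoonup y$ versus $x\leftharpoonup y$, after which the identities are read off directly from sources and targets.
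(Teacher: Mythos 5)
Your proposal is correct and takes essentially the same approach as the paper's proof: both pass through the bijection \eqref{eq:heap} and match \eqref{bg1}--\eqref{bg4} with \eqref{maltsev_1}, \eqref{maltsev_2}, \eqref{associativity_1}, \eqref{associativity_2} (with \eqref{braided-commutative} automatic), then conclude via Remark \ref{rem:weakheap} and handle involutivity by Proposition \ref{prop:ternary-op-and-YBM}. The only difference is that you write out the ``immediate computations'' the paper leaves implicit, and you explicitly note that an involutive pre-braiding is automatically an isomorphism, hence a braiding---a small point the paper glosses over.
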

\begin{proof}\hspace{-4pt}\footnote{The equivalence emerged after a conversation that the first author (DF) had with Marino Gran, who suggested that the Mal'tsev and associativity conditions could produce solutions to the YBE.}
We define a braiding $\sigma$ on $\Gg$ by \eqref{eq:heap} by means of the heap structure on $\Lambda$ and vice versa. Observe that \eqref{maltsev_1} corresponds to \eqref{bg1}, and \eqref{maltsev_2} corresponds to \eqref{bg2}. Immediate computations show that \eqref{associativity_1} corresponds to \eqref{bg3}, and \eqref{associativity_2} corresponds to \eqref{bg4}. It remains to observe that \eqref{braided-commutative} is always satisfied: $m \sigma([a,b,c]) = m([a,b,c])$ because both terms are forced to be the unique arrow with source $a$ and target $c$.

We know from Proposition \ref{prop:ternary-op-and-YBM} that $\sigma$ is involutive if and only if the ternary operation satisfies \eqref{eq:ternary-op-involutivity}.
\end{proof}
\begin{cor}\label{ternary_operation_and_left_quasigroup} Let $\Lambda$ be a set, $\Gg = \hat{\Lambda}$ the corresponding groupoid of pairs. The following data are equivalent:
\begin{enumerate}
	\item[\textit{i.}] A group operation $*$ on $\Lambda$.
	\item[\textit{ii.}] A pointed heap structure on $\Lambda$: i.e., the datum of a heap structure, and of a distinguished element $u\in\Lambda$.
	\item[\textit{iii.}] A braiding $\sigma$ on $\Gg$, and a distinguished vertex $u\in \Lambda$.
\end{enumerate}
Via this correspondence, abelian groups correspond to involutive braidings, and thus to ternary operations satisfying \eqref{eq:ternary-op-involutivity}.
\end{cor}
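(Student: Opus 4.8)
The plan is to obtain the three-way equivalence simply by stacking the two equivalences already established, namely Proposition \ref{lem:group_iff_pointed_heap} and Theorem \ref{ternary_operation_and_braiding}, while keeping careful track of the distinguished element (resp.\@ vertex) throughout. First I would record the equivalence (\textit{i})$\Leftrightarrow$(\textit{ii}): this is exactly the content of Proposition \ref{lem:group_iff_pointed_heap}, which identifies group operations on $\Lambda$ with pointed heap structures on $\Lambda$, via $\langle a,b,c\rangle = a * b^{-*} * c$ in one direction and $a * b = \langle a,u,b\rangle$ in the other, with $u$ the group unit. Nothing new needs to be proven here.

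Next, for (\textit{ii})$\Leftrightarrow$(\textit{iii}), I would invoke Theorem \ref{ternary_operation_and_braiding}, which already gives a bijective correspondence between heap structures on $\Lambda$ and pre-braidings $\sigma$ on $\Gg = \hat{\Lambda}$, mediated by \eqref{eq:heap}. Since a pointed heap is just a heap together with a distinguished element $u\in\Lambda$, and since the correspondence of Theorem \ref{ternary_operation_and_braiding} neither involves, alters, nor constrains any such distinguished element, one carries $u$ along unchanged on both sides. Thus a pointed heap structure corresponds exactly to a pre-braiding together with a distinguished vertex $u\in\Lambda$, which is precisely datum (\textit{iii}). Composing the two bijections yields (\textit{i})$\Leftrightarrow$(\textit{ii})$\Leftrightarrow$(\textit{iii}).

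For the final assertion, I would chain the two ``moreover'' clauses of the cited results: Proposition \ref{lem:group_iff_pointed_heap} matches abelian group operations with heap structures whose ternary operation satisfies \eqref{eq:ternary-op-involutivity}, while Theorem \ref{ternary_operation_and_braiding} matches those same ternary operations with involutive pre-braidings. Hence abelian groups correspond, under the composite, to involutive braidings, and thus to ternary operations satisfying \eqref{eq:ternary-op-involutivity}.

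The only point demanding any care — and it is a bookkeeping check rather than a genuine obstacle — is the compatibility of the distinguished element across the two correspondences: I must make sure that the element $u$ produced in passing from (\textit{i}) to (\textit{ii}) (the group unit) is literally the same datum that is remembered in (\textit{ii})$\Leftrightarrow$(\textit{iii}), so that the composite is a bijection of data and not merely a bijection up to a choice of basepoint. Since in both statements $u$ is an unconstrained element of $\Lambda$ that is simply carried along, this compatibility is immediate, and no re-verification of the axioms \eqref{bg1}--\eqref{braided-commutative} or of the Mal'tsev and associativity conditions is required beyond what the two cited results already supply.
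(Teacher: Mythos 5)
Your proof is correct and is precisely the argument the paper intends: the corollary carries no separate proof in the paper because it follows immediately by composing Proposition \ref{lem:group_iff_pointed_heap} (groups $\leftrightarrow$ pointed heaps) with Theorem \ref{ternary_operation_and_braiding} (heaps $\leftrightarrow$ pre-braidings), carrying the distinguished element along unchanged, exactly as you do. Your chaining of the two ``abelian/involutive'' clauses to get the final assertion also matches the paper's reasoning.
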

Let $\Cat{Gp}$ be the category of groups. Let $\Cat{BrPHG}$ be the category of braided PH groupoids $(\Gg,\sigma)$, with morphisms given by the weak morphisms of groupoids $f = (f^1, f^0)\colon\Gg\to\Hh$ satisfying $(f^1\times f^1)\sigma_\Gg = \sigma_\Hh (f^1\times f^1)$; and let $\Cat{BrPHG}^\bullet$ be the category of braided PH groupoids with a distinguished vertex $u$, and morphisms given by the morphisms $f=(f^1, f^0)\colon \Gg\to \Hh$ in $\Cat{BrPHG}$ satisfying $f^0(u_\Gg) = u_\Hh$. Let $\Cat{Hp}$ be the category of heaps $(H,\xi)$, where $H$ is a set and $\xi\colon H\times H\times H\to H$ is a ternary operation; with morphisms given by the maps $f\colon H\to K$ satisfying $f \xi_H= \xi_K (f\times f\times f)$. Finally, let $\Cat{Hp}^*$ be the category of pointed heaps, with puncture-preserving heap morphisms. We now prove that the correspondence of Corollary \ref{ternary_operation_and_left_quasigroup} is functorial, and thus provides isomorphisms of categories.
\begin{prop}\label{prop:heap_category_equiv}
Let $f\colon \Lambda\to \Lambda'$ be a map of sets. We write two given group structures on $\Lambda$ and $\Lambda'$ as $(\Lambda, *, u)$ and $(\Lambda', *', u')$ respectively, and suppose the ternary operations $\langle\blank,\blank,\blank\rangle$, $\langle\blank,\blank,\blank\rangle'$ and the braiding $\sigma, \sigma'$ are defined as in Corollary \ref{ternary_operation_and_left_quasigroup}. The following conditions are equivalent:
\begin{enumerate}
	\item[\textit{i.}] The map $f\colon \Lambda\to \Lambda'$ is a group homomorphism.
	\item[\textit{ii.}] The map $f\colon \Lambda\to \Lambda'$ is a morphism of pointed heaps.
	\item[\textit{iii.}] The map $\hat{f}\colon \hat{\Lambda}\to \hat{\Lambda}'$ defined as in Remark \ref{rem:blow-up} is a weak morphism of braided groupoids, with $\hat{f}^0(u) = u'$.
\end{enumerate}
In particular, the correspondence of Corollary \ref{ternary_operation_and_left_quasigroup} is functorial. Therefore, the categories $\Cat{Hp}^*$ and $\Cat{Gp}$ are isomorphic, and both are equivalent to the category $\Cat{BrPHG}^\bullet$.
\end{prop}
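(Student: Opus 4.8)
The plan is to prove the three-way equivalence by a short cycle of elementary implications, and then to read off functoriality and the (iso)equivalences of categories as formal consequences. Throughout I would keep in mind, from Corollary \ref{ternary_operation_and_left_quasigroup} and the paragraph following Proposition \ref{lem:group_iff_pointed_heap}, that the group and heap structures are mutually expressed by $\langle a,b,c\rangle = a*b^{-*}*c$ and $a*b = \langle a,u,b\rangle$, and that $\sigma$ is recovered from the ternary operation through \eqref{eq:heap}. All the verifications are by direct substitution, so the proof is mostly bookkeeping.

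First I would treat \textit{i} $\iff$ \textit{ii}. If $f$ is a group homomorphism, then it preserves units and inverses, so $f(u)=u'$ and $f\langle a,b,c\rangle = f(a*b^{-*}*c) = f(a)*'f(b)^{-*'}*'f(c) = \langle fa,fb,fc\rangle'$; hence $f$ is a morphism of pointed heaps. Conversely, if $f$ is a morphism of pointed heaps, then $f(u)=u'$ and $f(a*b) = f\langle a,u,b\rangle = \langle fa,fu,fb\rangle' = \langle fa,u',fb\rangle' = fa*'fb$, so $f$ is a group homomorphism. This is the same computation underlying Proposition \ref{lem:group_iff_pointed_heap}, carried one level up to morphisms.

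Next I would establish \textit{ii} $\iff$ \textit{iii}. The point is that $\hat f = (\hat f^1,\hat f^0)$ is \emph{automatically} a weak morphism of groupoids, since $\hat\Lambda$ carries the unique PH groupoid law and $\hat f^1([a,b]\cdot[b,c]) = [fa,fc] = \hat f^1[a,b]\cdot\hat f^1[b,c]$. So the only extra content in \textit{iii} is the pre-braiding compatibility $(\hat f^1\times\hat f^1)\sigma = \sigma'(\hat f^1\times\hat f^1)$ together with $\hat f^0(u)=u'$. Testing the compatibility on a length-two path $[a,b]\,|\,[b,c]$: the left-hand side gives $[fa,\,f\langle a,b,c\rangle,\,fc]$ while the right-hand side gives $[fa,\,\langle fa,fb,fc\rangle',\,fc]$. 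Since an arrow of a PH groupoid is determined by its endpoints, these agree for all $a,b,c$ exactly when $f\langle a,b,c\rangle = \langle fa,fb,fc\rangle'$, i.e. when $f$ is a heap morphism; adjoining $\hat f^0(u)=f(u)=u'$ yields precisely condition \textit{ii}.

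Finally I would harvest the categorical conclusions. On objects the assignments are those of Corollary \ref{ternary_operation_and_left_quasigroup}; on morphisms one sends $f$ to $f$ (between pointed heaps) and to $\hat f$ (between pre-braided groupoids), and the equivalences above certify that these land in the correct morphism classes, while $\widehat{g\circ f} = \hat g\circ\hat f$ and $\widehat{\id} = \id$ give functoriality. Because the object-level correspondence between pointed heaps and groups is a bijection that is the identity on underlying sets, with morphisms matching by \textit{i} $\iff$ \textit{ii}, one gets an \emph{isomorphism} $\Cat{Hp}^*\cong\Cat{Gp}$. For $\Cat{BrPHG}^\bullet$ I would restrict the equivalence $\hat{(\blank)}\dashv\Obj$ of Remark \ref{rem:blow-up}: by \textit{ii} $\iff$ \textit{iii} it carries pointed-heap morphisms to pre-braided-groupoid morphisms and back, so it is an \emph{equivalence}, not an isomorphism, the discrepancy being that $\hat{(\blank)}\circ\Obj$ is only naturally isomorphic to the identity on $\Cat{BrPHG}^\bullet$. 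The main obstacle, such as it is, lies not in difficulty but in care: keeping straight how $\hat f^1$ acts entrywise on length-two paths versus how $\sigma$ inserts the middle vertex in \textit{ii} $\iff$ \textit{iii}, and, at the end, correctly distinguishing the isomorphism $\Cat{Hp}^*\cong\Cat{Gp}$ from the mere equivalence with $\Cat{BrPHG}^\bullet$.
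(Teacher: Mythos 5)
Your proposal is correct and follows essentially the same route as the paper: the same substitution computations (group homomorphism $\Leftrightarrow$ heap morphism via $\langle a,b,c\rangle = a*b^{-*}*c$ and $a*b=\langle a,u,b\rangle$, and heap morphism $\Leftrightarrow$ braiding compatibility by comparing middle vertices of $\sigma$-images of paths), followed by the same categorical harvest, with the paper merely organising the equivalence as the cycle \textit{i}$\Rightarrow$\textit{ii}$\Rightarrow$\textit{iii}$\Rightarrow$\textit{i} where you use two biconditionals. The only step you compress is the explicit check that the natural isomorphism $F\circ G\cong\mathrm{id}$ on $\Cat{BrPHG}^\bullet$ has components that are themselves morphisms of pre-braided pointed groupoids, which the paper verifies directly; this is a routine verification and does not affect correctness.
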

\begin{proof}
Let $f$ be a group homomorphism; then clearly $f\langle a,b,c\rangle = f(a*b^{-*}*c) = f(a)*f(b)^{-*}*f(c)=\langle f(a), f(b), f(c) \rangle'$, and $f(u) = u'$, which proves \textit{i}$\Rightarrow$\textit{ii}. 

As for \textit{ii}$\Rightarrow$\textit{iii}, let $f$ be a morphism of pointed heaps. We already know that $\hat{f} = (\hat{f}^1, \hat{f}^0)$ is a weak morphism of groupoids, and that $f = \hat{f}^0$ satisfies $f(u) = u'$, thus we only need to check that $\hat{f}^1$ intertwines the two braidings $\sigma$ and $\sigma'$. One has
\begin{align*}
	(\hat{f}^1\otimes \hat{f}^1)\sigma([a,b,c])&= (\hat{f}^1\otimes \hat{f}^1)([a, \langle a,b,c\rangle, c])\\
	&= [f(a), f(\langle a,b,c\rangle), f(c)]\\
	&= [f(a), \langle f(a),f(b),f(c)\rangle', f(c)]\\
	&= \sigma' ([f(a), f(b), f(c)])\\
	&= \sigma' (\hat{f}^1\otimes \hat{f}^1)([a,b,c]),
\end{align*}
as desired. Finally, assuming \textit{iii}, one has 
\begin{align*}
	f(a*b)&= \langle f(a), f(u), f(b)\rangle'\\
	&= \langle f(a), u', f(b)\rangle'\\
	&= f(a)*' f(b),
\end{align*}
which proves \textit{iii}$\Rightarrow$\textit{i}. This concludes the proof that the correspondence of Corollary \ref{ternary_operation_and_left_quasigroup} is functorial. 

It is known (see \cite{BRZEZINSKITrussesParagons}) that $\mathsf{Hp}^*$ and $\mathsf{Gp}$ are isomorphic. Let $F\colon \mathsf{Hp}^*\to \mathsf{BrPHG}^\bullet$ be the functor sending a pointed heap $(\Lambda, u)$ to the braided groupoid of pairs $\hat{\Lambda}$ with distinguished vertex $u$; and let $G\colon \mathsf{BrPHG}^\bullet\to \mathsf{Hp}^*$ be the functor sending a braided groupoid of pairs $\Gg$ over $\Lambda$ with distinguished vertex $u$, to the associated pointed heap $(\Lambda,u)$. Similarly to Remark \ref{rem:blow-up}, it is immediate to see that $G \circ F = \id[\mathsf{Hp}^*]$, while $F\circ G$ is isomorphic to $\id[\mathsf{BrPHG}^\bullet]$. Indeed, for all braided PH groupoid $\Gg$ over $\Lambda$, with distinguished vertex $u\in\Lambda$, the pair of maps $(f^1, f^0)$, defined by $f^1\colon \Gg\ni x\mapsto (\source(x), \target(x))\in \hat{\Lambda}$ and $f^0  = \id[\Lambda]$, makes a natural isomorphism in $\mathsf{BrPHG}^\bullet$ between $\Gg$ and the braided groupoid of pairs $\hat{\Lambda}$ with distinguished vertex $u$. Therefore, $F$ and $G$ yield an equivalence of categories.
\end{proof}
\begin{rem}
In Proposition \ref{prop:heap_category_equiv}, the equivalence \textit{i}$\Leftrightarrow$\textit{ii} is well-known; see e.g.\@ \cite{baer1929einfuhrung}, or \cite[Lemma 2.1]{BRZEZINSKITrussesParagons}.\end{rem}
\begin{rem}
Notice that the category $\Cat{BrPHG}^\bullet$ of braided PH groupoids with a distinguished vertex is \emph{not} the category of braided \emph{pointed} PH groupoids: the latter is instead the category of braided PH groupoids with a distinguished \emph{arrow}.
\end{rem}
\subsection{Examples of structure groupoids in the principal homogeneous case} 
In this section, we present concrete examples of structure groupoids $\Gg(\sigma)$, when $\sigma$ is an involutive YBM of PH type.

Let $A$ be an abelian group, and let $\Aa = \hat{A}$ denote, as above, the groupoid of pairs on the set of vertices $A$. The ternary operation $\langle a,b,c\rangle = a-b+c$ is an abelian heap structure, and the map $\sigma$ sending $[a,b,c]$ to $[a, a-b+c, c]$ is an involutive braiding (and in particular a YBM) on $\Aa$, by Corollary \ref{ternary_operation_and_left_quasigroup}. By Lemma \ref{ternary_operation_bijective}, one has that $\sigma$ is also non-degenerate.

The cardinality of $\Aa(a,A)$ is constant for all $a\in A$, and equals the cardinality of $A$. Thus by Corollary \ref{cor:bounded}, the Garside family $E$ of $\Cc(\sigma)$ has bounded length, where the upper bound is $|A|$. In particular, $E\subsetneq \Cc(\sigma)$.
\begin{ex}\label{ex:Z3}
Let $A = \Z/3\Z$. The map $\sigma$ defined above
acts as follows:
\begin{align*}
	&[a,b,b]\mapsto [a,a,b],\; [a,a,b]\mapsto [a,b,b] \text{ for all }a,b\in A,\\
	&[a,b,a]\mapsto [a,2a-b,a]\text{ for all }a,b\in A,\\
	&[a,b,c]\mapsto [a,b,c]\text{ if }a,b,c\text{ are all distinct.}
\end{align*}
Therefore, the structure groupoid is generated by a complete quiver of degree $1$ on three vertices, with the relations
\begin{align*}
	& [a,b,b]\sim[a,a,b] \text{ for all }a,b\in A,
	\qquad [a,b,a]\sim [a,2a-b,a]\text{ if }a\neq b.
\end{align*}
Notice that $2a-b\neq b$ if and only if $a\neq b$, thus the second class of relations contains no redundancies. 

We now denote by $[[a_1,\dots, a_n]]$ the equivalence class in $\Cc(\sigma)$ of a path $[a_1, \dots, a_n]$. The complementation $\star$ is given by 
\begin{align*}
	&[[a,b]]\star [[a,c]] = [[b, b-a+c]] \text{ when }b\neq c,
\end{align*}
and it is easy to see by direct computation that $b-a+c = a$ whenever $a,b,c$ are distinct. From Proposition \ref{prop:Deltas},
$E=\{ \mathbf{1}_a, [[a, a]], [[a, b]], [[a, a, b]], [[a, b, a]], [[a, a, b, a]]\mid a, b\in\Aa \text{ distinct}\}$. Observe that the Garside family $E$ of $\Cc(\sigma)$ is the union of the sets $\mathrm{Div}_L(\Delta_a)$ of left-divisors of $\Delta_a$, for $a\in A$; and if $a,b,c$ are three distinct vertices, by the previous considerations one has $\Delta_a = [[a,b,b,a]] = [[a,a,b,a]] = [[a,a,c,a]] =[[a,c,c,a]]=[[a, b, a, a]]=[[a, c, a, a]]$. In particular, $\Delta_a$ is a loop for all $a$.
\end{ex}
\begin{ex}
Let $A = (\Z/2\Z)^n$. An element of $A$ will be denoted by a row vector of $0$'s and $1$'s. For $a = (a_1,\dots, a_n), b = (b_1,\dots, b_n)\in A$, we define $\delta_{a,b}= (\delta_{a_1, b_1},\dots, \delta_{a_n, b_n})$, where $\delta_{a_i, b_i}$ is Kronecker's delta symbol, and $\mathbf{1}=(1, 1, \ldots, 1)$.

It is easy to see that
$$\sigma([a,b,c]) = [a, b+\mathbf{1}+\delta_{a,c}, c].$$
Indeed, $\sigma([a,b,c])_i = [a_i, a_i-b_i+c_i, c_i] = [a_i,a_i+b_i+c_i, c_i]$. If $a_i = c_i$ then $a_i+b_i+c_i= b_i+2a_i = b_i$, otherwise $a_i+c_i = 1$ and $a_i+b_i+c_i = b_i+1$.

Thus, the structure groupoid of $\sigma$ is generated by a complete quiver of degree $1$ on $2^n$ vertices, modulo the relations $[a,b,c]\sim [a, b+\mathbf{1}+\delta_{a,c}, c]$. Using the same notation as in Example \ref{ex:Z3} for the equivalence class of a path, the complementation $\star$ is
$$[[a,b]]\star [[a,c]] = [[b, a+\mathbf{1}+\delta_{b,c}]],$$
for $b\neq c$. Indeed, $[[a,b]]\star [[a,c]]$ equals $[[b,d]]$ for a unique path $[b,d]$ such that $[a, a+b+d, d] = [a,c,d]$; thus we need $a+b+d = c$. This implies $d = c+a+b$, but this in turn is $a+\mathbf{1}+\delta_{b,c}$, as we proved above.
\end{ex}
\bibliographystyle{acm}
\bibliography{refs}

\begin{thebibliography}{10}

\bibitem{andruskiewitsch2005quiver}
{\sc Andruskiewitsch, N.}
\newblock On the quiver-theoretical quantum {Y}ang--{B}axter equation.
\newblock {\em Selecta Math. (N.S.) 11}, 2 (2005), 203--246.
\newblock With an appendix by Mitsuhiro Takeuchi.

\bibitem{baer1929einfuhrung}
{\sc Baer, R.}
\newblock Zur {E}inf\"uhrung des {S}charbegriffs.
\newblock {\em J. Reine Angew. Math. 160\/} (1929), 199--207.

\bibitem{baxter1972partition}
{\sc Baxter, R.~J.}
\newblock Partition function of the eight-vertex lattice model.
\newblock {\em Ann. Physics 70\/} (1972), 193--228.

\bibitem{baxter1985exactly}
{\sc Baxter, R.~J.}
\newblock {\em Exactly solved models in statistical mechanics}.
\newblock Academic Press, Inc., London, 1989.
\newblock Reprint of the 1982 original.

\bibitem{borceux}
{\sc Borceux, F.}
\newblock {\em Handbook of categorical algebra. 1}, vol.~50 of {\em
  Encyclopedia of Mathematics and its Applications}.
\newblock Cambridge University Press, Cambridge, 1994.

\bibitem{BournGranJacqmin}
{\sc Bourn, D., Gran, M., and Jacqmin, P.-A.}
\newblock On the naturalness of {M}al'tsev categories.
\newblock In {\em Joachim {L}ambek: the interplay of mathematics, logic, and
  linguistics}, vol.~20 of {\em Outst. Contrib. Log.} Springer, Cham, 2021,
  pp.~59--104.

\bibitem{breaz2024heaps}
{\sc Breaz, S., Brzezi\'nski, T., Rybo{\l}owicz, B., and Saracco, P.}
\newblock Heaps of modules and affine spaces.
\newblock {\em Ann. Mat. Pura Appl. (4) 203}, 1 (2024), 403--445.

\bibitem{BRZEZINSKITrussesParagons}
{\sc Brzezi\'nski, T.}
\newblock Trusses: paragons, ideals and modules.
\newblock {\em J. Pure Appl. Algebra 224}, 6 (2020), 106258, 39.

\bibitem{brzezinski2024affgebras}
{\sc Brzezi{\'n}ski, T., and Radziszewski, K.}
\newblock On matrix {L}ie affgebras.
\newblock {\em arXiv preprint arXiv:2403.05142\/} (2024).

\bibitem{carboni1991Maltsev}
{\sc Carboni, A., Lambek, J., and Pedicchio, M.~C.}
\newblock Diagram chasing in {M}al'cev categories.
\newblock {\em J. Pure Appl. Algebra 69}, 3 (1991), 271--284.

\bibitem{chouraqui2010garside}
{\sc Chouraqui, F.}
\newblock Garside groups and {Y}ang--{B}axter equation.
\newblock {\em Comm. Algebra 38}, 12 (2010), 4441--4460.

\bibitem{chouraquiMbraces}
{\sc Chouraqui, F.}
\newblock A note on {G}arside monoids and {$\mathscr{M}$}-braces.
\newblock {\em Semigroup Forum 106}, 3 (2023), 553--568.

\bibitem{chouraquiPartial}
{\sc Chouraqui, F.}
\newblock The {Y}ang-{B}axter equation and {T}hompson's group {$F$}.
\newblock {\em Internat. J. Algebra Comput. 33}, 3 (2023), 547--584.

\bibitem{dehornoy2002groupes}
{\sc Dehornoy, P.}
\newblock Groupes de {G}arside.
\newblock {\em Ann. Sci. \'Ecole Norm. Sup. (4) 35}, 2 (2002), 267--306.

\bibitem{DehornoyRCcalculus}
{\sc Dehornoy, P.}
\newblock Set-theoretic solutions of the {Y}ang--{B}axter equation,
  {RC}-calculus, and {G}arside germs.
\newblock {\em Adv. Math. 282\/} (2015), 93--127.

\bibitem{dehornoy2015foundations}
{\sc Dehornoy, P., Digne, F., Godelle, E., Krammer, D., and Michel, J.}
\newblock {\em Foundations of {G}arside theory}, vol.~22 of {\em EMS Tracts in
  Mathematics}.
\newblock European Mathematical Society (EMS), Z\"urich, 2015.

\bibitem{dehornoy2013garside}
{\sc Dehornoy, P., Digne, F., and Michel, J.}
\newblock Garside families and {G}arside germs.
\newblock {\em J. Algebra 380\/} (2013), 109--145.

\bibitem{drinfeld2006some}
{\sc Drinfeld, V.~G.}
\newblock On some unsolved problems in quantum group theory.
\newblock In {\em Quantum groups ({L}eningrad, 1990)}, vol.~1510 of {\em
  Lecture Notes in Math.} Springer, Berlin, 1992, pp.~1--8.

\bibitem{etingof2001lectures}
{\sc Etingof, P., and Schiffmann, O.}
\newblock Lectures on the dynamical {Y}ang--{B}axter equations.
\newblock In {\em Quantum groups and {L}ie theory ({D}urham, 1999)}, vol.~290
  of {\em London Math. Soc. Lecture Note Ser.} Cambridge Univ. Press,
  Cambridge, 2001, pp.~89--129.

\bibitem{felder1995conformal}
{\sc Felder, G.}
\newblock Conformal field theory and integrable systems associated to elliptic
  curves.
\newblock In {\em Proceedings of the {I}nternational {C}ongress of
  {M}athematicians, {V}ol.\ 1, 2 ({Z}\"urich, 1994)\/} (1995), Birkh\"auser,
  Basel, pp.~1247--1255.

\bibitem{garside1969braid}
{\sc Garside, F.~A.}
\newblock The braid group and other groups.
\newblock {\em Quart. J. Math. Oxford Ser. (2) 20\/} (1969), 235--254.

\bibitem{gervais1984novel}
{\sc Gervais, J.-L., and Neveu, A.}
\newblock Novel triangle relation and absence of tachyons in {L}iouville string
  field theory.
\newblock {\em Nuclear Phys. B 238}, 1 (1984), 125--141.

\bibitem{hollings2017wagner}
{\sc Hollings, C.~D., and Lawson, M.~V.}
\newblock {\em Wagner's theory of generalised heaps}.
\newblock Springer, Cham, 2017.

\bibitem{introGroupoids}
{\sc Ibort, A., and Rodríguez, M.~A.}
\newblock {\em An introduction to groups, groupoids and their representations}.
\newblock CRC Press, Boca Raton, FL, 2020.

\bibitem{LYZ}
{\sc Lu, J.-H., Yan, M., and Zhu, Y.-C.}
\newblock On the set-theoretical {Y}ang--{B}axter equation.
\newblock {\em Duke Math. J. 104}, 1 (2000), 1--18.

\bibitem{categories-for-the-work}
{\sc MacLane, S.}
\newblock Categories for the working mathematician. 4th corrected printing.
\newblock {\em Graduate texts in mathematics 5\/} (1988).

\bibitem{MaltsevOriginal}
{\sc Mal'tsev, A.~I.}
\newblock К общей теории алгебраических систем.
\newblock {\em Мат. Cборник 35}, 1 (1954), 3--20.

\bibitem{matsu}
{\sc Matsumoto, D.~K.}
\newblock Dynamical braces and dynamical {Y}ang--{B}axter maps.
\newblock {\em J. Pure Appl. Algebra 217}, 2 (2013), 195--206.

\bibitem{matsumotoshimizu}
{\sc Matsumoto, D.~K., and Shimizu, K.}
\newblock Quiver-theoretical approach to dynamical {Y}ang--{B}axter maps.
\newblock {\em J. Algebra 507\/} (2018), 47--80.

\bibitem{paris2000fundamental}
{\sc Paris, L.}
\newblock On the fundamental group of the complement of a complex hyperplane
  arrangement.
\newblock In {\em Arrangements---{T}okyo 1998}, vol.~27 of {\em Adv. Stud. Pure
  Math.} Kinokuniya, Tokyo, 2000, pp.~257--272.

\bibitem{prufer1924theorie}
{\sc Pr\"ufer, H.}
\newblock Theorie der {A}belschen {G}ruppen.
\newblock {\em Math. Z. 22}, 1 (1925), 222--249.

\bibitem{RumpDecompositionTheorem}
{\sc Rump, W.}
\newblock A decomposition theorem for square-free unitary solutions of the
  quantum {Y}ang-{B}axter equation.
\newblock {\em Adv. Math. 193}, 1 (2005), 40--55.

\bibitem{shibu}
{\sc Shibukawa, Y.}
\newblock Dynamical {Y}ang--{B}axter maps.
\newblock {\em Int. Math. Res. Not.}, 36 (2005), 2199--2221.

\bibitem{shibukawa2007invariance}
{\sc Shibukawa, Y.}
\newblock Dynamical {Y}ang--{B}axter maps with an invariance condition.
\newblock {\em Publ. Res. Inst. Math. Sci. 43}, 4 (2007), 1157--1182.

\bibitem{wagner1953}
{\sc Wagner, V.~V.}
\newblock Теория обобщенных груд и обобщенных
  групп.
\newblock {\em Мат. Cборник 32}, 3 (1953), 545--632.

\bibitem{yang}
{\sc Yang, C.~N.}
\newblock Some exact results for the many-body problem in one dimension with
  repulsive delta-function interaction.
\newblock {\em Phys. Rev. Lett. 19\/} (1967), 1312--1315.

\end{thebibliography}
\end{document}